\newtheorem{theorem}{Theorem}[section]
\newtheorem{lemma}[theorem]{Lemma}
\newtheorem{proposition}[theorem]{Proposition}
\newtheorem{corollary}[theorem]{Corollary}
\newtheorem{remark}[theorem]{Remark}
\newcommand{\mc}[1]{{\mathcal #1}}
\newcommand{\mb}[1]{{\mathbf #1}}
\newcommand{\bb}[1]{{\mathbb #1}}
\newcommand{\bs}[1]{{\boldsymbol #1}}
\newcommand{\<}{\langle}
\renewcommand{\>}{\rangle}
\begin{document}

\title[Large deviations of the exclusion process in two dimensions]
{A large deviations principle for the polar
  empirical measure in the two-dimensional symmetric simple exclusion
  process} \author{Claudio  Landim, Chih-Chung Chang, Tzong-Yow Lee}

\address{\noindent IMPA, Estrada Dona Castorina 110,
CEP 22460 Rio de Janeiro, Brasil and CNRS UPRES-A 6085,
Universit\'e de Rouen, 76128 Mont Saint Aignan, France.
\newline
e-mail:  \rm \texttt{landim@impa.br}
}

\address{Department of Mathematics, National Taiwan University,
Taipei, Taiwan, R.O.C.
\newline
e-mail: \rm \texttt{ccchang@math.ntu.edu.tw} }

\begin{abstract}
  We prove an energy estimate for the polar empirical measure of the
  two-dimensional symmetric simple exclusion process. We deduce from
  this estimate and from results in \cite{cll} large deviations
  principles for the polar empirical measure and for the occupation
  time of the origin.
\end{abstract}

\maketitle 

\section{Introduction}
\label{sec-n0}

We presented in \cite{cll} a large deviations principle for the
occupation time of the origin in the two-dimensional symmetric simple
exclusion process. The proof relies on a large deviations principle
for the ``polar'' empirical measure. After the paper was published and
after T-Y Lee passed away, A. Asselah pointed to us that there was a
flaw in the argument.  The proofs of the lower and upper bound of the
large deviations principle for the polar measure were correct, but the
bounds did not match.

We correct this inaccuracy in this article by showing that we may
restrict the upper bound to measures with finite energy, that is, to
absolutely continuous measures $\mu(dr) = m(r) dr$ whose density $m$
has a generalized derivative, denoted by $m'$, such that $\int_{\bb
  R_+} [m'(r)]^2/\sigma(m(r)) \, dr < \infty$, where
$\sigma(a)=a(1-a)$.

The large deviations principle of the occupation time of the origin is
correct as stated in \cite{cll}, and follows, through a contraction
principle, from the amended version of the large deviations principle
for the polar measure presented here.

There are many reasons to examine the large deviations of the
occupation time in dimension $2$. On the one hand, the unusual large
deviations decay rate $t/\log t$, with a logarithmic correction which
appears in critical dimensions. On the other hand, the unexpected
possibility to derive an explicit formula (cf. equation \eqref{b1}
below) for the large deviations rate function. Finally, the method by
itself may be of interest in other contexts. It has been shown
\cite{l} that in dimension $1$ the occupation time large deviations,
whose decay rate is $\sqrt{t}$, is related to the large deviations of
the empirical measure. Here, in dimension $2$, it is shown to be
connected to the large deviations of the polar measure. It is
conceivable that in higher dimensions, where the decay rate is $t$,
the large deviations are associated to some other type of empirical
measure.

We refer to \cite{cll} for further references and for an historical
background of this problem.  We wish to thank A. Asselah for pointing
to us the flaw in \cite{cll}, K. Mallick and K. Tsunoda for
stimulating discussion on occupation time large deviations and drawing
our attention to the recent papers \cite{s1, s2}. These exchanges
encouraged us to try to fill the gap left in \cite{cll}.

\section{Notation and results}
\label{sec-n1}

The speeded-up, symmetric simple exclusion process on $\bb Z^2$ is the
continuous-time Markov process on $\{0,1\}^{\bb Z^2}$ whose generator,
denoted by $L_T$, acts on functions $f: \{0,1\}^{\bb Z^2} \to \bb R$
which depends only on a finite number of coordinates as
\begin{equation*}
(L_T f) (\eta) \;=\; \frac T2 \sum_{j=1}^2  \sum_{x\in\bb Z^2} 
\big\{ f(\sigma^{x,x+e_j}\eta)-f(\eta) \big\}\; .  
\end{equation*}
In this formula, $\{e_1, e_2\}$ is the canonical basis of $\bb R^2$,
and $\sigma^{x,y}\eta$ is the configuration obtained from $\eta$ by
exchanging the occupation variables $\eta (x)$ and $\eta(y)$:
\begin{equation*}
(\sigma^{x,y}\eta) (z) \; =\;
\left\{
\begin{array}{ll}
\eta (z) & \text{ if $z\neq x$, $y$}, \\
\eta(x) & \text{ if $z= y$}, \\
\eta(y) & \text{ if $z= x$} \; .
\end{array}
\right.  
\end{equation*}

Denote by $\nu_\alpha$, $0\le\alpha\le 1$, the Bernoulli
product measure on $\{0,1\}^{\bb Z^2}$ with marginals given by
\begin{equation*}
\nu_\alpha\{\eta , \, \eta(x) =1 \} \; =\; \alpha\;, \quad
\text{for $x\in \bb Z^d$.} 
\end{equation*}
A simple computation shows that $\{\nu_\alpha,\, 0\le \alpha\le 1\}$
is a one-parameter family of reversible invariant measures. 

Denote by $D(\bb R_+, \{0,1\}^{\bb Z^2})$ the space of right
continuous functions $x: \bb R_+ \to \{0,1\}^{\bb Z^2}$ with left
limits, endowed with the Skorohod topology.  The elements of $D(\bb
R_+, \{0,1\}^{\bb Z^2})$ are represented by $\eta_s$, $s\ge 0$. Let
$\bb P_\alpha=\bb P_{T, \alpha}$, $0\le \alpha\le 1$, be the
probability measure on $D(\bb R_+, \{0,1\}^{\bb Z^2})$ induced by
Markov process whose generator is $L_T$ starting from $\nu_\alpha$.
Expectation with respect to $\bb P_\alpha$ is denoted by $\bb
E_\alpha$.

Denote by $\mc M$ the space of locally finite, nonnegative measures on
$(0,\infty)$.  Let $\sigma_T: \bb Z^2 \setminus\{0\} \to \bb R_+$ be
given by
\begin{equation*}
\sigma_T(x) \;=\; \frac{\log |x|}{\log T}\; ,
\end{equation*}
where $|x|$ represents the Euclidean norm of $x$, $|x|^2 = x^2_1 +
x^2_2$.  Denote by $\mu^{1,T}: \{0,1\}^{\bb Z^2} \to \mc M$ the
``polar'' empirical measure on $\bb R_+$ induced by a configuration
$\eta$:
\begin{equation*}
\mu^{1,T} (\eta) \;=\; \frac {1} {2\pi \log T} \sum_{x\in \bb
Z^2_*} \eta (x) \, \frac 1{|x|^2} \, \delta_{\sigma_T(x)} \; .
\end{equation*}
Here, $\delta_v$ is the Dirac measure concentrated on $v\in \bb R_+$.
Notice the factor $2\pi$ on the denominator to normalize the sum.
Denote by $\bar\mu^T : D(\bb R_+, \{0,1\}^{\bb Z^2}) \to \mc M$ the
measure on $\bb R_+$ obtained as the time integral of the measures
$\mu^{1,T}$:
\begin{equation}
\label{n-62}
\bar\mu^T \;=\; \int_0^1 \mu^{1,T} (\eta_s)\, ds\;.
\end{equation}
The main result of this article establishes a large deviations
principle for the measure $\bar\mu^T$ under $\bb P_\alpha$.

Denote by $1\!\!1$ the configuration in which all sites are occupied,
$1\!\!1 (x) = 1$ for all $x\in \bb Z^2$. The measures $\mu^{1,T}$,
$\bar\mu^T$ are nonnegative and bounded above by the measure
$\lambda_T = \mu^{1,T} (1\!\!1)$: for all nonnegative, continuous
function $H:(0,\infty) \to \bb R$ with compact support, and all
elements of $ \{0,1\}^{\bb Z^2}$, $D(\bb R_+, \{0,1\}^{\bb Z^2})$,
\begin{equation*}
0\;\le\; \int_{\bb R_+} H\, d\mu^{1,T} \;\le\;
\int_{\bb R_+} H\, d\lambda_T \quad\text{and}\quad
0\;\le\; \int_{\bb R_+} H\, d\bar \mu^{T} \;\le\;
\int_{\bb R_+} H\, d\lambda_T\;.
\end{equation*}
On the other hand, an elementary computation shows that there exists a
finite universal constant $C_0$ such that
\begin{equation}
\label{n-41}
\lambda_T ([a,b]) \;\le\; (b-a) \; +\; \frac{C_0}{\log T}  
\end{equation}
for all $0< a < b <\infty$, $T>1$. It is therefore natural to
introduce the space $\mc M_c$, $c>0$, of nonnegative, locally finite
measures $\mu$ defined on the Borel sets of $(0,\infty)$ and such that
$\mu ([a,b]) \le (b-a) + c$ for every $0< a < b <\infty$:
\begin{equation*}
\mc M_c \; =\; \Big\{ \mu\in\mc M : \mu ([a,b]) \le (b-a)
+ c \text{ for $0< a < b <\infty$ } \Big\} \;.
\end{equation*}
The uniform bound on the measure of the intervals makes the set $\mc
M_c$ endowed with the vague topology a compact, separable metric
space. Let $\mc M_0$ be the subspace of $\mc M_c$ of all measures which
are absolutely continuous with respect to the Lebesgue measure and
whose density is bounded by $1$. The subspace $\mc M_0$ is closed
(and thus compact).

Let $C_K((0,\infty))$ be the space of continuous functions $G:
(0,\infty)\to \bb R$ with a compact support, and let
$C^n_K((0,\infty))$, $n\ge 1$, be the space of compactly supported
functions $F: (0,\infty)\to \bb R$ whose $n$-th derivative is
continuous.  Denote by $\mc Q: \mc M_0 \to \bb R_+$ the energy
functional given by
\begin{equation}
\label{n-39}
\mc Q (m(r) \, dr) \;=\; \sup_{G\in C^1_K((0,\infty))}
\Big\{ -\, \int_0^\infty  G'(r)\, m(r) \, dr \;-\; 
\int_{0}^\infty \sigma (m(r)) \, G(r)^2 \, dr \Big\} \;,
\end{equation}
where $\sigma(a) = a(1-a)$ stands for the mobility of the exclusion
process.  By \cite[Lemma 4.1]{blm}, the functional $\mc Q$ is convex
and lower-semicontinuous. Moreover, if $\mc Q (m(r) \, dr)$ is finite,
$m$ has a generalized derivative, denoted by $m'$, and
\begin{equation*}
\mc Q (m(r) \, dr) \;=\; \frac{1}{4}\, \int_0^{\infty} 
\frac{[m'(r)]^2}{\sigma(m(r))} \, dr\;.
\end{equation*}

Fix $0<\alpha<1$, and let $\mc M_{0,\alpha}$ the space of measures in
$\mc M_0$ whose densities are equal to $\alpha$ on $(1/2,\infty)$:
$\mc M_{0,\alpha} = \{\mu(dr) = m(r) dr \in\mc M_0 : m(r)=1/2 \text{
  a.s.\! in } (1/2,\infty) \}$. Denote by $I_{\mc Q, \alpha}: \mc M_c \to \bb
R_+$ the functional given by
\begin{equation}
\label{n-52}
I_{\mc Q, \alpha} (\mu) \;=\; 
\begin{cases}
\pi\, \mc Q (\mu) & \text{ if $\mu\in \mc M_{0,\alpha}$,} \\
+\infty & \text{otherwise.} 
\end{cases}
\end{equation}
Since the set $\mc M_{0,\alpha}$ is convex and closed, the functional
$I_{\mc Q, \alpha}$ inherits from $\mc Q$ the convexity and the
lower-semicontinuity. Furthermore, as $\mc M_c$ is compact and $I_{\mc
  Q, \alpha}$ lower semi-continuous, the level sets of $I_{\mc Q,
  \alpha}$ are compact. Next assertion is the main result of the
article.

\begin{theorem}
\label{mt01}
For every closed subset $F$ of $\mc M_c$ and every open subset
$G$ of $\mc M_c$,
\begin{align*}
& \limsup_{T\to\infty} \frac{\log T} T \log \bb P_{\alpha}
\big[ \bar\mu^T \in F \big] \;\le\; - \inf_{\mu \in F} I_{\mc Q, \alpha}
(\mu)\; , \\
& \quad \liminf_{T\to\infty} \frac{\log T} T \log \bb P_{\alpha}
\big[ \bar\mu^T \in G \big] \;\ge\; - \inf_{\mu \in G} I_{\mc Q, \alpha}
(\mu)\; .
\end{align*}
Moreover, the rate functional $I_{\mc Q, \alpha} : \mc M_c \to \bb
R_+$ is convex, lower semi-continuous and has compact level sets.
\end{theorem}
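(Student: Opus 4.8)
The plan is to deduce the two large deviations bounds by combining the lower bound and the (weak) upper bound already established in \cite{cll} with the energy estimate that forms the technical core of the present article; the ``moreover'' part needs no separate argument. Indeed, by \cite[Lemma 4.1]{blm} the energy $\mc Q$ is convex and lower semicontinuous, and since $\mc M_{0,\alpha}$ is a convex, vaguely closed subset of the compact space $\mc M_c$, the functional $I_{\mc Q,\alpha}$ inherits convexity and lower semicontinuity, whence its level sets — closed subsets of the compact set $\mc M_c$ — are compact.

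For the lower bound there is nothing new to do: the proof of the liminf inequality in \cite{cll} is not affected by the flaw and already delivers $\liminf_{T\to\infty}(\log T/T)\log\bb P_\alpha[\bar\mu^T\in G]\ge-\inf_{\mu\in G}I_{\mc Q,\alpha}(\mu)$ for every open $G$. (At bottom this is the usual change-of-measure recipe: tilt $L_T$ by a weakly inhomogeneous drift attached to a smooth density $m$ with $\mc Q(m\,dr)<\infty$, verify the tilted process obeys $\bar\mu^T\to m\,dr$, and control the relative entropy of the tilt, which produces the constant $\pi\,\mc Q(m\,dr)$.)

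The upper bound is where the correction enters, and it proceeds in three steps. First, the a priori bound \eqref{n-41} gives $\bar\mu^T\in\mc M_{C_0/\log T}$, so the superexponential estimates of \cite{cll} allow one to replace an arbitrary vaguely closed $F\subset\mc M_c$ by its intersection with an arbitrarily small vague neighbourhood of $\mc M_{0,\alpha}$: densities are asymptotically at most $1$ near the origin and relax to the equilibrium value $\alpha$ far from it. Second, the upper bound of \cite{cll} provides a functional $\widehat I$, with $\widehat I\le I_{\mc Q,\alpha}$ in general and $\widehat I=I_{\mc Q,\alpha}$ on the measures of finite energy, such that $\limsup_{T\to\infty}(\log T/T)\log\bb P_\alpha[\bar\mu^T\in F]\le-\inf_{\mu\in F}\widehat I(\mu)$. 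Third — the new input — the energy estimate provides a function $M\mapsto\kappa(M)$ with $\kappa(M)\uparrow\infty$ such that the $\bb P_\alpha$-probability that $\bar\mu^T$ lies in a vague neighbourhood of a measure of energy at least $M$ is at most $\exp\{-\kappa(M)\,T/\log T\}$ up to a subexponential factor. Splitting $F$ according to whether the energy is at most $M$ or exceeds $M$ then gives $\limsup_{T\to\infty}(\log T/T)\log\bb P_\alpha[\bar\mu^T\in F]\le-\min\{\inf_{\mu\in F,\ \mc Q(\mu)\le M}\widehat I(\mu),\ \kappa(M)\}$; letting $M\to\infty$, using $\widehat I=I_{\mc Q,\alpha}$ where $\mc Q<\infty$, and returning to all of $F$ by mollifying the density (so that $\mc Q(m_\varepsilon\,dr)\to\mc Q(m\,dr)$ as $\varepsilon\downarrow0$) together with lower semicontinuity, one recovers $-\inf_{\mu\in F}I_{\mc Q,\alpha}(\mu)$.

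The main obstacle is the energy estimate itself. I would derive it from the exponential martingale of the exclusion dynamics: for $G\in C^1_K((0,\infty))$, after subtracting the appropriate quadratic correction, $\exp\{(T/\log T)\,\Phi_G\}$ is a positive supermartingale, where $\Phi_G$ is the functional of the path $(\eta_r)_{r\le s}$ built from the Dynkin martingale of $s\mapsto\int G\,d\mu^{1,T}(\eta_s)$. Applying the entropy inequality relative to $\nu_\alpha$, using that the relative entropy of $\bb P_\alpha$ over the relevant horizon is $O(T/\log T)$, and bounding the Feynman--Kac exponent by the Dirichlet form of $L_T$, one is left with a variational problem whose value, in polar coordinates, reproduces $-\int_0^\infty G'(r)\,m(r)\,dr-\int_0^\infty\sigma(m(r))\,G(r)^2\,dr$; optimizing over a countable dense family of test functions $G$ and a union bound upgrade the bound for a single $G$ into the bound for the full energy. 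The delicate points are the passage from the lattice sum $\sum_{x}\eta(x)\,|x|^{-2}\,\delta_{\sigma_T(x)}$ to the one-dimensional integral on $\bb R_+$ — the weights $|x|^{-2}$ and the scaling $\sigma_T(x)=\log|x|/\log T$ are precisely what collapse the two-dimensional discrete Laplacian near the origin, at the correct speed $T/\log T$, onto a one-dimensional operator — and the degeneracy of $\sigma(a)=a(1-a)$ at $a\in\{0,1\}$, which forces one to truncate the density away from $0$ and $1$ before optimizing over $G$ and to remove the truncation at the end.
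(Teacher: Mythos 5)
Your overall architecture is the right one — the ``moreover'' part is exactly the paper's argument, the lower bound is indeed unchanged from \cite{cll}, and the key new input is an energy estimate used to upgrade the flawed upper bound — but the two steps where the hard work lies are sketched in a way that, as written, would not go through.

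First, the ``split $F$ by the level $\{\mc Q\le M\}$ versus $\{\mc Q>M\}$'' step has a genuine gap. The empirical measure $\bar\mu^T$ is a finite weighted sum of Dirac masses, so $\mc Q(\bar\mu^T)=+\infty$ for every $T$ and every configuration; the event $\{\bar\mu^T\in F,\ \mc Q(\bar\mu^T)\le M\}$ is empty, and the complementary event carries all the probability, so the split yields nothing. Replacing ``$\mc Q(\bar\mu^T)\ge M$'' by ``$\bar\mu^T$ lies in a vague neighbourhood of a measure of energy $\ge M$'' does not help either, because $\{\mc Q>M\}$ is vaguely dense in $\mc M_c$ (any measure is a vague limit of measures of arbitrarily high energy), so such a neighbourhood is all of $\mc M_c$. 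The paper avoids this by never talking about $\mc Q(\bar\mu^T)$: the energy estimate (Corollary~\ref{n-l15} and \eqref{n-32}) controls $\mc Q_{\alpha,H}(\bar\mu^T_\delta)$ for a \emph{mollified} $\bar\mu^T_\delta$ and a \emph{fixed} test function $H$, and the upper bound is proved from scratch via a Varadhan min-max argument in which the closed truncated-energy sets $B_{q,\ell}=\{\mu:2\pi\max_{p\le q}\mc Q_{\alpha,H_p}(\mu)\le\ell\}$ from \eqref{n-26}, together with $A_{n,\kappa}$, $C_{m,\kappa}$ and $B^{\delta,\varrho}_{T,\gamma}$, are carried along with the Radon--Nikodym bound of Lemma~\ref{n-l3}, and all the parameters $\delta,\varepsilon,q,\ell,n,m,\kappa_1,\kappa_2,\varrho,\gamma$ are exchanged with the infimum over $\mu\in F$ using \cite[Lemma~A2.3.3]{kl}. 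One cannot first take the $\hat I$ bound from \cite{cll} for granted and then ``append'' the energy constraint afterwards; the constraint has to enter inside the min-max before $\delta\to 0$.

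Second, your sketch of the energy estimate compresses the two hardest points into one sentence. The Feynman--Kac/Dirichlet bound (Lemma~\ref{n-l9}) produces the quadratic correction in terms of the \emph{microscopic} gradients $[\eta(x+e_j)-\eta(x)]^2$, not in terms of $\sigma(m(r))$. Passing from one to the other is a replacement lemma, and here it must hold on the \emph{entire} lattice $\bb Z^2$ and not only on $\{|x|<T^{1/2-\delta}\}$; this is Proposition~\ref{n-l6}, the new superexponential estimate whose proof hinges on the concavity of $\sigma$ (via Jensen, see Remark~\ref{n-l18}) and which is \emph{one-sided} for exactly this reason. That proposition, together with the harmonicity of $\log|x|$ that powers the summation-by-parts identity \eqref{n-08} (which you do correctly identify as the device collapsing the $2$D sum onto a $1$D integral), is what turns Lemma~\ref{n-l9} into Corollary~\ref{n-l15}. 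Also, two minor inaccuracies: the reference measure is the stationary $\nu_\alpha$, so there is no entropy of $\bb P_\alpha$ to control — the paper uses Chebyshev plus Feynman--Kac directly; and there is no truncation of the density away from $0$ and $1$ anywhere in the argument.

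In short: your identification of the missing ingredient, the roles of $\hat I_\alpha$ and $I_{\mc Q,\alpha}$, and the structure of the lower bound are correct and match the paper, but the upper bound cannot be obtained by post-processing the \cite{cll} bound — it must be reproved with the energy sets built into the min-max — and the energy estimate itself needs the global two-blocks replacement Proposition~\ref{n-l6}, which is a genuinely new ingredient of this paper.
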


\begin{remark}
\label{last}
We explain in this remark the flaw in \cite{cll}.  Denote by $\hat
I_{\alpha}: \mc M_0 \to \bb R_+$ the functional given by
\begin{equation*}
\hat I_{\alpha} (m(r) \, dr) \;=\; \sup_{G\in C^1_K((0,1/2))}
\Big\{ -\, \int_0^{1/2}  G'(r)\, m(r) \, dr \;-\; 
\int_{0}^{1/2} \sigma (m(r)) \, G(r)^2 \, dr \Big\} \;.
\end{equation*} 
Note that the supremum is carried over functions whose support is now
contained in $(0,1/2)$. Let $I_{\alpha}: \mc M_c \to \bb R_+$ be given
by
\begin{equation}
\label{n-61}
I_{\alpha} (\mu) \;=\; 
\begin{cases}
\hat I_{\alpha} (\mu) & \text{ if $\mu\in \mc M_{0,\alpha}$,} \\
+\infty & \text{otherwise.} 
\end{cases}  
\end{equation}
Section 5 of \cite{cll} shows that $I_\alpha$ is an upper bound for
the large deviations principle. This upper bound is not
sharp. Consider, for instance, the measure $\mu_\beta(dr) = m_\beta(r)
dr$, $\beta\not = \alpha$, where $m_\beta(r) = \beta$ for $0\le r <
1/2$ and $m_\beta(r) = \alpha$ for $r\ge 1/2$. By \eqref{n-61},
$I_\alpha(\mu_\beta)=0$, which is clearly not sharp.

The problem lies in the proof of Lemma 6.3, at the end of page 686. It
is claimed there that if $I_\alpha(\mu)<\infty$ for an absolutely
continuous measure $\mu(dr) = m(r) dr$, there exists a sequence of
smooth functions $m_n$ such that $m_n(r) = \alpha$ for $r\ge 1/2$,
$\mu_n(dr) = m_n(r) dr \to \mu$ in the vague topology, and
$I_\alpha(m_n(r) dr)\to I_\alpha(\mu)$. This is not true for the
measure $\mu_\beta$ introduced in the previous paragraph.
\end{remark}

\begin{remark}
\label{n-l19}
For a measure $\mu$ in $\mc M_{0,\alpha}$ with finite energy,
$\mc Q(\mu)<\infty$,
\begin{equation*}
I_{\mc Q, \alpha} (\mu) \;=\; \frac{\pi}{4}\, \int_0^{1/2} 
\frac{[m'(r)]^2}{\sigma(m(r))} \, dr \;=\; I_{\alpha} (\mu)\;.
\end{equation*}
However, for measures in $\mc M_{0,\alpha}$ with inifinite energy,
$I_{\mc Q, \alpha}(\mu)=\infty$, while $I_{\alpha}(\mu)$ might be
finite. For example, $I_{\mc Q, \alpha} (\mu_\beta) = \infty$ and
$I_{\alpha}(\mu_\beta) = 0$, where $\mu_\beta$ is the measure
introduced in the previous remark.
\end{remark}

This remark shows that what is missing in the proof of the large
deviations principle in \cite{cll} is the derivation of the property
that measures with infinite energy in $\bb R_+$ have infinite
cost. Note that for measures $\mu(dr) = m(r) \, dr$ in $\mc
M_{0,\alpha}$ and such that $I_{\alpha}(\mu) <\infty$, the finiteness
of the energy is a property of the measure in a vicinity of $1/2$
because $m(r)=\alpha$ for $r>1/2$ and the energy of $\mu$ on the
interval $(0,1/2)$ is finite by definition of $I_{\alpha}$.

Corollary \ref{n-l15} below asserts that measures with infinite energy
in $\bb R_+$ have infinite cost. Its proof relies on Proposition
\ref{n-l6}, a new result which states that a superexponential
two-blocks estimate for the cylinder function $[\eta(0) -
\eta(e_j)]^2$ holds on the entire space $\bb Z^2$, and not only on
$\{x\in \bb Z^2 : |x| < T^{1/2-\delta}\}$.  Proposition \ref{n-l6} is
restricted to the local function $[\eta(0) - \eta(e_j)]^2$ because the
concavity of the map $\beta \mapsto E_{\nu_\beta}[\{\eta(0) -
\eta(e_j)\}^2]$ is used. We refer to Remark \ref{n-l18} for further
comments on this result.

Remark \ref{n-l11} explains why it is possible to prove an energy
estimate in the whole space $\bb Z^2$, but it is not possible to
handle, in the proof of the large deviations upper bound,
perturbations defined in the entire space. Actually, in the upper
bound, the dynamics is perturbed only in a ball $\{x\in \bb Z^2 : |x|
< T^{1/2-\delta}\}$.

The energy estimate for the empirical measure $\bar\mu^T$ requires
some care because the measure is defined on $\bb R_+$, a
one-dimensional space, and the system evolves the two-dimensional
space $\bb Z^2$. This difficulty is surmounted through formula
\eqref{n-08} and Lemma \ref{n-l9}.

A large deviations principle for the occupation time of the origin
follows from Theorem \ref{mt01} and a contraction principle. The proof
of this result is presented in Section 7 of \cite{cll}. We recall it
here since it is one of the main motivations for Theorem \ref{mt01}.

\begin{theorem}
\label{s0}
For every closed subset $F$ of $[0,1]$ and every open subset $G$ of
$[0,1]$,
\begin{align*}
& \limsup_{T\to\infty} \frac{\log T} T \log \bb P_{\alpha}
\big[ \int_0^1 \eta_s(0)\, ds  \in F \big] 
\;\le\; - \, \inf_{\beta \in F} \Upsilon_\alpha (\beta)\; , \\
& \quad \liminf_{T\to\infty} \frac{\log T} T \log \bb P_{\alpha}
\big[ \int_0^1 \eta_s(0)\, ds \in G \big] \;\ge\; 
- \, \inf_{\beta \in G} \Upsilon_\alpha (\beta)\; ,
\end{align*}
where $\Upsilon_\alpha: [0,1] \to\bb R_+$ is the rate function given
by
\begin{equation*}
\Upsilon_\alpha (\beta) \; =\; \frac \pi 2 \Big\{\sin^{-1}(2\beta
-1)-\sin^{-1}(2\alpha -1)\Big\}^2 \; .
\end{equation*}
\end{theorem}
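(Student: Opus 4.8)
The plan is to derive Theorem \ref{s0} from Theorem \ref{mt01} by a contraction principle, exactly as in Section 7 of \cite{cll}. First I would identify a continuous map $\Phi : \mc M_c \to [0,1]$ such that $\Phi(\bar\mu^T)$ equals $\int_0^1 \eta_s(0)\, ds$ up to an error that vanishes superexponentially at rate $T/\log T$. The natural candidate comes from the observation that the occupation time of the origin, after the space-time rescaling built into the polar empirical measure, should be read off from the behaviour of the measure $\bar\mu^T$ near $r=0$: concretely $\Phi(\mu) = \lim_{r\downarrow 0} \tfrac1r \mu([0,r])$, or more robustly $\Phi(\mu) = 2\,\mu([0,1/2])$ if one exploits that admissible limiting profiles are constant on $(0,1/2)$ and equal to $\alpha$ on $(1/2,\infty)$. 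I would check that on the effective domain $\mc M_{0,\alpha}$ this functional is continuous for the vague topology restricted to the compact set $\mc M_c$, and that the event $\{\int_0^1 \eta_s(0)\,ds \in F\}$ differs from $\{\Phi(\bar\mu^T)\in F\}$ only on a set of superexponentially small $\bb P_\alpha$-probability; this last point is a replacement/equivalence-of-ensembles estimate showing that $\eta_s(0)$ may be averaged over a small box near the origin without changing large deviations at scale $T/\log T$.

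Once this is in place, the contraction principle for large deviations gives that $\int_0^1 \eta_s(0)\,ds$ satisfies a large deviations principle with rate function
\begin{equation*}
\Upsilon_\alpha(\beta) \;=\; \inf\big\{ I_{\mc Q,\alpha}(\mu) : \mu\in\mc M_c,\ \Phi(\mu)=\beta \big\}
\;=\; \inf\Big\{ \tfrac\pi4 \int_0^{1/2} \tfrac{[m'(r)]^2}{\sigma(m(r))}\,dr : m(0^+)=\beta,\ m(1/2^-)=\alpha \Big\}\,.
\end{equation*}
The remaining work is the explicit computation of this one-dimensional variational problem. By Jensen or by the Euler--Lagrange equation the minimizer $m$ is the profile for which the ``arclength'' $\int m'/\sqrt{\sigma(m)}$ is realized most economically; substituting $m = \tfrac12(1+\sin\theta)$ linearizes $\sqrt{\sigma(m)} = \tfrac12\cos\theta$, so that $\int_0^{1/2} [m']^2/\sigma(m)\,dr = \int_0^{1/2} (\theta')^2\,dr$, a Dirichlet energy on the interval of length $1/2$ with boundary values $\theta(0)=\sin^{-1}(2\beta-1)$ and $\theta(1/2)=\sin^{-1}(2\alpha-1)$. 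The optimal $\theta$ is affine, giving $\int_0^{1/2}(\theta')^2\,dr = 2\{\sin^{-1}(2\beta-1)-\sin^{-1}(2\alpha-1)\}^2$, and multiplying by $\pi/4$ yields $\Upsilon_\alpha(\beta) = \tfrac\pi2\{\sin^{-1}(2\beta-1)-\sin^{-1}(2\alpha-1)\}^2$, as claimed. One should also note that $\mc M_{0,\alpha}$ contains measures of infinite energy with every boundary value $\beta$, so the infimum is genuinely over finite-energy profiles and the formula is finite for all $\beta\in[0,1]$.

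The main obstacle is the first step: establishing that the occupation time functional is, up to superexponentially negligible corrections, a \emph{continuous} function of $\bar\mu^T$ on $\mc M_c$. The polar measure only records an angularly averaged, $|x|^{-2}$-weighted density, so recovering the genuine occupation time $\eta_s(0)$ at the single site $0$ requires showing that the contribution of sites at polar distance $\sigma_T(x)$ close to $0$ (i.e. $|x| = T^{o(1)}$) is asymptotically governed by a spatially slowly varying density, which is precisely the content of the one- and two-blocks estimates — and here one needs the strengthened two-blocks estimate of Proposition \ref{n-l6}, valid on all of $\bb Z^2$, to control the region near the origin uniformly. Modulo this input, which is available from the results quoted in the excerpt, the rest is the soft contraction argument and the elementary variational computation sketched above.
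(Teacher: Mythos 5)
Your overall plan matches the paper's: Theorem~\ref{s0} is obtained from Theorem~\ref{mt01} by a contraction-type argument (the paper defers the details to Section~7 of \cite{cll} and only records the variational formula \eqref{b1}), and your explicit solution of the variational problem is correct --- the change of variables $m=\tfrac12(1+\sin\theta)$ turns $\tfrac\pi4\int_0^{1/2}[m']^2/\sigma(m)\,dr$ into $\tfrac\pi4\int_0^{1/2}[\theta']^2\,dr$ with $\theta(0)=\sin^{-1}(2\beta-1)$, $\theta(1/2)=\sin^{-1}(2\alpha-1)$, and the affine minimizer gives exactly $\Upsilon_\alpha(\beta)$.

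There are however two concrete errors in your discussion of the contraction map. First, the assertion that ``admissible limiting profiles are constant on $(0,1/2)$'' is false: the effective domain $\mc M_{0,\alpha}$ consists of all measures $m(r)\,dr$ with $0\le m\le 1$, $m\equiv\alpha$ on $(1/2,\infty)$ and finite energy, and the rate function \eqref{b1} is an infimum over \emph{non}constant smooth profiles on $(0,1/2)$ with prescribed endpoints. Consequently your ``robust'' candidate $\Phi(\mu)=2\mu([0,1/2])$ is wrong (e.g. for the linear profile $m(r)=\beta+2(\alpha-\beta)r$ one gets $2\mu([0,1/2])=(\alpha+\beta)/2\neq\beta$); the correct target is $\Phi(\mu)=m(0^+)$, i.e. your first candidate. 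Second, $\mu\mapsto m(0^+)$ is not continuous on $\mc M_c$ for the vague topology, so the usual contraction principle does not apply off the shelf: as in \cite{cll}, Section 7, one must combine the LDP for $\bar\mu^T$ with an exponential-equivalence step (replacing $\int_0^1\eta_s(0)\,ds$ by $\bar\mu^T(\psi_{r,\delta})$ for small $r,\delta$, and then letting $r,\delta\to 0$ using the equicontinuity of low-energy profiles near $0$). The estimate that accomplishes this is a one-block/two-block replacement for the occupation variable $\eta_s(0)$ near the origin, not Proposition~\ref{n-l6}, which concerns the specific cylinder function $[\eta(0)-\eta(e_j)]^2$ and enters the energy bound rather than the occupation-time replacement.
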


Actually the rate function $\Upsilon_\alpha$ is derived through the
variational problem
\begin{equation}
\label{b1}
\Upsilon_\alpha (\beta) \; =\; \inf_{m} \frac{\pi}{4}
\int_0^{1/2} \frac {m'(r)^2}{\sigma(m(r))} \, dr \; ,
\end{equation}
where the infimum is carried over all smooth functions $m:[0,1/2]\to
\bb R$ such that $m(0) = \beta$, $m(1/2) = \alpha$.

The article is organized as follows. In Section \ref{sec n-3}, we state
the superexponential estimates and in the following one the energy
estimate. In Section \ref{nsec-1}, we present an alternative formula
for the large deviations rate functional and derive some of its
properties. In Sections \ref{sec3} and \ref{n-sec6} we prove the upper
bound and the lower bounds of the large deviations principle.

\section{Superexponential estimate} 
\label{sec n-3}

We present in this section some superexponential estimates needed in
the proof of the large deviations principle. We start with an
elementary estimate. Denote by $\phi$ the approximation of the
identity given by $\phi(r) = (1/2) \mb 1\{[-1,1]\}$, where $\mb 1\{
[-1,1] \}$ represents the indicator of the interval $[-1,1]$. For $r$,
$\delta >0$, let $\phi_{r,\delta}$ be the family of approximations
induced by $\phi$: $\phi_\delta (s) = \delta^{-1} \phi (s/\delta)$,
$\phi_{r,\delta} (s) = \phi_{\delta} (s-r)$.

It will be simpler to work with a continuous family of approximations
of the identity. Let $\psi_\delta$ be a nonnegative, continuous
function, bounded by $1/(2\delta)$, which coincides with $\phi_\delta$
on $[-(\delta - \delta^2),(\delta - \delta^2)]$ and whose support is
contained in $[-(\delta + \delta^2),(\delta + \delta^2)]$. Set
$\psi_{r,\delta} (s) = \psi_{\delta} (s-r)$.

Denote by $\mu(f)$ the integral of a continuous and compactly
supported function $f:(0,\infty) \to \bb R$ with respect a the measure
$\mu\in \mc M_c$:
\begin{equation*}
\mu(f) \;=\; \int_{\bb R_+} f(r)\, \mu(dr)\;.
\end{equation*}
By construction, for all $b>0$, there exists a finite constant $C =
C(b)$ such that
\begin{equation}
\label{n-29}
\limsup_{T\to\infty} \sup_{2\delta \le r \le b} \sup_{\eta}
\big| \, \mu^{1,T}(\phi_{r,\delta}) - 
\mu^{1,T}(\psi_{r,\delta}) \, \big| \;\le\; C \delta\;.
\end{equation}
A similar estimate holds if $\mu^{1,T}$ is replaced by $\bar\mu^{T}$.
Note that for each measure $\mu\in \mc M_c$, $\mu(\psi_{r,\delta})$ is a
continuous function of the parameter $r$ because $\psi_{\delta}$ is a
bounded, continuous function.

The next comparison between a Riemann sum with its integral
counterpart will also be used repeatedly.  

\begin{lemma}
\label{n-l22}
Let $H: \bb R_+ \to \bb R$ be a Lipschitz-continuous function with
compact support in $(a,b)$, $0<a<b<\infty$. Then, there exists a
finite constant $C_0$ depending only on $\Vert H \Vert_\infty =
\sup_{r\in\bb R_+} |H(r)|$ and on the Lipschitz constant of $H$ such that
\begin{equation*}
\Big|\, \frac 1{\log T} \sum_{x\in \bb Z^2_*} H(\sigma_T(x)) \,
\frac 1{|x|^2} \; -\; 2\pi \int_{\bb R_+} H(r) \, dr \,\Big|\;\le\;
\frac{C_0}{T^a}\;\cdot
\end{equation*}
\end{lemma}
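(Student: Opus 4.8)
The plan is to compare the sum $\frac1{\log T}\sum_{x\in\bb Z^2_*} H(\sigma_T(x))\,|x|^{-2}$ with a two-dimensional integral, then pass to polar coordinates. First I would write the sum as a Riemann-type sum: for each $x\in\bb Z^2_*$ with $|x|$ in the support of $r\mapsto H(\log|x|/\log T)$ — that is, with $T^a\le|x|\le T^b$ — associate the unit square $Q_x = x + [-1/2,1/2)^2$ centered at $x$, so that the squares tile $\bb Z^2$. On $Q_x$ the function $y\mapsto H(\sigma_T(y))\,|y|^{-2}$ is close to its value $H(\sigma_T(x))\,|x|^{-2}$ at the center. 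Hence
\begin{equation*}
\Big|\,\frac1{\log T}\sum_{x\in\bb Z^2_*} H(\sigma_T(x))\,\frac1{|x|^2}
\;-\;\frac1{\log T}\int_{\bb R^2} H(\sigma_T(y))\,\frac1{|y|^2}\,dy\,\Big|
\end{equation*}
is bounded by $\frac1{\log T}\sum_x \int_{Q_x} |H(\sigma_T(y))|y|^{-2} - H(\sigma_T(x))|x|^{-2}|\,dy$, where the sum runs only over the finitely many $x$ with $Q_x$ meeting $\{T^a\le|y|\le 2T^b\}$ (here I use that $H$ has compact support in $(a,b)$, so the integrand and summand vanish once $|y|$ leaves that annulus for $T$ large, modulo boundary squares).

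Next, estimate the local oscillation. On $Q_x$ with $|x|\ge T^a$, one has $|\,|y|^{-2} - |x|^{-2}\,|\le C|x|^{-3}$ and, since $\sigma_T$ has gradient of order $1/(|y|\log T)$, $|H(\sigma_T(y)) - H(\sigma_T(x))|\le C_H/(|x|\log T)$, where $C_H$ depends on the Lipschitz constant of $H$. Therefore the integrand on $Q_x$ is at most $C(\|H\|_\infty + \mathrm{Lip}(H))\big(|x|^{-3} + |x|^{-3}(\log T)^{-1}\big)$, and summing $|x|^{-3}$ over $T^a\le|x|\le 2T^b$ gives a contribution bounded by $\frac1{\log T}\cdot C\sum_{|x|\ge T^a}|x|^{-3}\le \frac{C}{\log T}\cdot \frac{C}{T^a}$. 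This yields the bound $C_0/T^a$ for the sum-to-integral error — in fact with an extra $1/\log T$ to spare, which comfortably absorbs the boundary squares straddling $|y|=T^a$ and $|y|=T^b$.

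Finally, evaluate the integral in polar coordinates: with $y = \rho\,\omega$, $\rho>0$, $\omega\in S^1$,
\begin{equation*}
\frac1{\log T}\int_{\bb R^2} H\Big(\frac{\log|y|}{\log T}\Big)\frac{dy}{|y|^2}
\;=\;\frac{2\pi}{\log T}\int_0^\infty H\Big(\frac{\log\rho}{\log T}\Big)\frac{d\rho}{\rho}
\;=\;2\pi\int_{\bb R} H(r)\,dr\;,
\end{equation*}
where the last step is the change of variables $r = \log\rho/\log T$, $dr = d\rho/(\rho\log T)$, and the lower limit $\rho=0$ (i.e. $r=-\infty$) is harmless because $H$ is supported in $(a,b)\subset(0,\infty)$; likewise $\int_{\bb R}H\,dr = \int_{\bb R_+}H\,dr$. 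Combining the two displays gives the claim. The only mildly delicate point is the bookkeeping for the boundary squares near $|y|=T^a$, where the summand/integrand need not both vanish; but there are only $O(T^a)$ such squares, each contributing at most $\frac1{\log T}\cdot\|H\|_\infty\cdot C T^{-2a}$, for a total of $O(T^{-a}/\log T)$, which is within the asserted bound. I expect this boundary accounting to be the main (though routine) obstacle; everything else is a standard Riemann-sum comparison.
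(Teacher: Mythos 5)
Your proof is correct and follows the same Riemann-sum-to-integral strategy sketched in the paper: tile $\bb Z^2$ by unit squares, compare summand to local integral, then evaluate the resulting $\bb R^2$-integral in polar coordinates. The oscillation estimates ($O(|x|^{-3})$ from $|y|^{-2}$ and $O(|x|^{-3}/\log T)$ from $H\circ\sigma_T$) and the boundary bookkeeping are carried out correctly, even yielding a spare factor of $1/\log T$.
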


\begin{proof}
Let $\square_x = [x_1,x_1+1) \times [x_2,x_2+1)$. The proof consists
in comparing 
\begin{equation*}
H(\sigma_T(x)) \, \frac 1{|x|^2} \quad{with}\quad
\int_{\square_x} H(\sigma_T(z)) \, \frac 1{|z|^2}\, dz\;,
\end{equation*}
and then the sum over $x$ of the second term in this formula with the
$2\pi \int_{\bb R_+} H(r) \, dr$. Details are left to the reader.
\end{proof}

\begin{remark}
\label{n-l23}
Let $J$ be a function in $C^1_K((0,\infty))$.  We will apply the
previous result to $H(r) = J(r) \, \mu^{1,T} (\psi_{r,\delta})$ in
Lemma \ref{n-l14} below and to $H(r) = J(r)^2\, \sigma (\mu^{1,T}
(\psi_{r,\delta}))$ in Corollary \ref{n-l8}. The proof of Lemma
\ref{n-l22} relies on the finiteness of $\Vert H\Vert_\infty$ and on
the Lipschitz property of $H$. Both conditions are fulfilled by the
map $r\to \mu^{1,T} (\psi_{r,\delta})$ on compact intervals of
$(0,\infty)$. On the one hand, for all $0<2\delta<r$, $0\le \mu^{1,T}
(\psi_{r,\delta}) \le 1$. On the other hand, as $\psi_\delta$ is a
Lipschitz-continuous function, by definition of $\psi_{r,\delta}$, for
each $\delta>0$, there exists a finite constant $C(\delta)$ such that
$|\mu^{1,T} (\psi_{r,\delta}) - \mu^{1,T} (\psi_{s,\delta})|\le
C(\delta) |r-s|$ for all $r$, $s\ge 2\delta$. Thus, Lemma \ref{n-l22}
holds for these functions with a contant $C_0$ which also depends on
$\delta$.
\end{remark}

The next estimate will be used to introduce space averages through the
regularity of the test function and a summation by parts.  Denote by
$\bb A_{T,\delta}(x) \subset \bb Z^2$, $\delta>0$, $|x|>T^{2 \delta}$,
the annulus
\begin{equation*}
\bb A_{T,\delta}(x)  \;=\; \big\{ y\in \bb Z^2 : |x|\,
T^{-\delta} \le |y| \le |x| \, T^{\delta} \big\}\;.
\end{equation*}
Let $J \in C_K((0,\infty))$ be a Lipschitz continuous function whose
support is contained in $[a,b]$. There exists a finite constant
$C(J)$, depending only on $J$, such that for all $0<\delta\le a/2$,
\begin{equation}
\label{n-23}
\limsup_{T\to\infty} \sup_{x\in \bb Z^2}
\Big|\, J (\sigma_T(x)) -
\frac 1 {4\pi \delta \log T} \sum_{y\in \bb A_{T,\delta}(x) } 
\frac {1}{|y|^2}  \, J (\sigma_T(y))\, \Big| \;\le\; C(J) 
\, \delta \;.
\end{equation}
Note that we may restrict the supremum to the points $x$ such that
$|x|\ge T^{a-\delta}$. 

\begin{lemma}
\label{n-l14}
Let $J \in C^1_K((0,\infty))$. There exists a finite constant $C_0$,
depending only on $J$, such that 
\begin{equation*}
\limsup_{T\to\infty} \sup_{\eta} \Big|\, 
\int_{\bb R_+} J(r)\, \mu^{1,T} (dr) \,-\, \int_{\bb R_+} J(r) \, 
\mu^{1,T} (\psi_{r,\delta})  \, dr \Big| \;\le \; 
C_0\, \delta \;,
\end{equation*}
A similar result is in force with $\mu^{1,T}$ replaced by $\bar
\mu^{T}$.
\end{lemma}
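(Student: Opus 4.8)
The plan is to unfold both integrals using the definition of $\mu^{1,T}$ and reduce the bound to a pointwise estimate between $J$ and a mollification of $J$. The first integral equals $(2\pi\log T)^{-1}\sum_{x\in\bb Z^2_*}\eta(x)\,|x|^{-2}\,J(\sigma_T(x))$. For the second, since $\psi_{r,\delta}(\sigma_T(x)) = \psi_\delta(\sigma_T(x)-r)$, Fubini and the change of variables $u = \sigma_T(x)-r$ give
\begin{equation*}
\int_{\bb R_+} J(r)\,\mu^{1,T}(\psi_{r,\delta})\,dr
\;=\; \frac 1{2\pi\log T}\sum_{x\in\bb Z^2_*}\eta(x)\,\frac 1{|x|^2}\,
(J * \psi_\delta)(\sigma_T(x))\;,
\end{equation*}
where $(J*\psi_\delta)(v)=\int_{\bb R} J(v-u)\,\psi_\delta(u)\,du$ and where the contributions with $u>\sigma_T(x)$ vanish because $J$ is supported in some $[a,b]\subset(0,\infty)$. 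Consequently the quantity to be estimated is bounded, uniformly in $\eta$, by $(2\pi\log T)^{-1}\sum_{x\in\bb Z^2_*}|x|^{-2}\,\big|J(\sigma_T(x))-(J*\psi_\delta)(\sigma_T(x))\big|$.

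Next I would prove the uniform bound $\sup_{v}\big|J(v)-(J*\psi_\delta)(v)\big|\le C_0\,\delta$. The only subtlety is that $\psi_\delta$ is not a probability density: being the continuous modification of $\phi_\delta$, it agrees with $\phi_\delta$ off two intervals of length $\delta^2$ on which both functions are at most $1/(2\delta)$, whence $\big|1-\int\psi_\delta\big|\le 2\delta$ and $\mathrm{supp}\,\psi_\delta\subset[-(\delta+\delta^2),\delta+\delta^2]$. Writing $J-(J*\psi_\delta)$ as $J\,\big(1-\int\psi_\delta\big)$ plus $\int\big(J(\cdot)-J(\cdot-u)\big)\psi_\delta(u)\,du$, the first term is bounded by $2\delta\,\Vert J\Vert_\infty$ and the second by $(\delta+\delta^2)\,\Vert J'\Vert_\infty\int\psi_\delta\le 2(\delta+\delta^2)\,\Vert J'\Vert_\infty$, using the mean value theorem on $\mathrm{supp}\,\psi_\delta$; this gives the claim with $C_0$ depending only on $\Vert J\Vert_\infty$ and $\Vert J'\Vert_\infty$.

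It remains to bound $(\log T)^{-1}\sum_{x\in\bb Z^2_*}|x|^{-2}$ restricted to the $x$ that actually contribute. Since $J$ is supported in $[a,b]$ and $J*\psi_\delta$ in $[a-\delta-\delta^2,b+\delta+\delta^2]$, the difference $J(\sigma_T(x))-(J*\psi_\delta)(\sigma_T(x))$ vanishes unless $T^{a-2\delta}\le|x|\le T^{b+2\delta}$, provided $\delta<a/2$ so that $a-2\delta>0$. A Riemann-sum comparison as in the proof of Lemma \ref{n-l22} bounds $(\log T)^{-1}\sum_{x:\,T^{a-2\delta}\le|x|\le T^{b+2\delta}}|x|^{-2}$ by $(2\pi\log T)^{-1}\int_{T^{a-2\delta}\le|z|\le T^{b+2\delta}}|z|^{-2}\,dz + O(1/\log T) = (b-a+4\delta)+O(1/\log T)$, which is bounded uniformly in $T$. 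Combining the two estimates, the expression in the statement is at most $C_0\,\delta\,\big((b-a+4\delta)+O(1/\log T)\big)$, so $\limsup_{T\to\infty}\sup_\eta$ of it is at most $C_0'\,\delta$ with $C_0'$ depending only on $J$. Finally, for $\bar\mu^T=\int_0^1\mu^{1,T}(\eta_s)\,ds$ the estimate just derived is pointwise in the trajectory and uniform in the configuration, so integrating over $s\in[0,1]$ and using $\int_0^1 ds=1$ reproduces the same bound. There is no serious obstacle here; the two points needing care are the non-normalization of $\psi_\delta$, which costs the extra $O(\delta)$ above, and keeping the contributing range of $x$ bounded away from $|x|=1$, i.e.\ away from $\sigma_T=0$, so that the integral comparison for $\sum|x|^{-2}$ is clean.
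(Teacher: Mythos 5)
Your argument is correct, and it takes a slightly more direct route than the one the paper sketches. The paper's one-line proof reuses the machinery from \eqref{n-23}: one replaces $J(\sigma_T(x))$ by its weighted average over the annulus $\bb A_{T,\delta}(x)$ (error $C(J)\delta$), sums by parts, swaps $\phi$ for $\psi$ via \eqref{n-29}, and then applies the Riemann-sum comparison of Lemma \ref{n-l22}/Remark \ref{n-l23}. You instead observe, via Fubini and the evenness of $\psi_\delta$, that $\int_{\bb R_+}J(r)\,\mu^{1,T}(\psi_{r,\delta})\,dr=\int_{\bb R_+}(J*\psi_\delta)\,d\mu^{1,T}$, so the whole difference collapses to $\int(J-J*\psi_\delta)\,d\mu^{1,T}$, after which the uniform bound $\Vert J-J*\psi_\delta\Vert_\infty\le C(J)\,\delta$ follows from $J\in C^1_K$ together with $\mathrm{supp}\,\psi_\delta\subset[-(\delta+\delta^2),\delta+\delta^2]$ and $|1-\int\psi_\delta|\le 2\delta$. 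This is a clean shortcut: the annulus averaging and summation by parts in the paper are essentially the discrete shadow of your convolution identity, and the remaining factor $\int\mb 1\{[a',b']\}\,d\mu^{1,T}$ could also be bounded in one stroke by $\mu^{1,T}\le\lambda_T$ and \eqref{n-41} rather than redoing a Riemann-sum comparison. Two cosmetic slips, neither of which affects the conclusion: the intervals where $\psi_\delta$ and $\phi_\delta$ differ have length $2\delta^2$ each (not $\delta^2$), though your resulting $|1-\int\psi_\delta|\le2\delta$ is still correct; and a factor $2\pi$ is misplaced in the Riemann-sum step (one has $(\log T)^{-1}\sum_x|x|^{-2}\approx(\log T)^{-1}\int|z|^{-2}\,dz$, not $(2\pi\log T)^{-1}\int|z|^{-2}\,dz$), but with the overall prefactor $(2\pi\log T)^{-1}$ your final estimate $C_0\,\delta\,\big((b-a+4\delta)+O(1/\log T)\big)$ is right.
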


\begin{proof}
This result is a simple consequence of \eqref{n-23}, a summation
by parts, the bound \eqref{n-29} and Remark \ref{n-l23}.
\end{proof}

We continue with two lemmata whose proofs are similar to the one of
Lemma 5.1 in \cite{cll}.

\begin{lemma}
\label{n-l1}
For every $\varrho>0$ and continuous function $H:[1/2,\infty)\to\bb R$
with compact support,
\begin{equation*}
\limsup_{T\to\infty} \frac{\log T}T \log \bb P_{\alpha} \Big[ \, \Big|
\int_{\bb R_+} H(r)\,  \bar\mu^T(dr) 
\,-\, \alpha \int_{1/2}^\infty H(r)\, dr \, \Big| 
\, > \varrho \, \Big] \; =\; - \infty\;.
\end{equation*}
\end{lemma}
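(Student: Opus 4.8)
The plan is to prove this as a law-of-large-numbers type superexponential estimate, exploiting the fact that on the region $r \ge 1/2$, i.e.\ for sites with $|x| \ge T^{1/2}$, the density of the polar measure is governed by the equilibrium value $\alpha$ because the dynamics has no time to substantially modify the occupation variables at such distant sites on the relevant time scale. First I would reduce, via Lemma \ref{n-l22} and the bound \eqref{n-41}, the statement to controlling the centered quantity $\int_{\bb R_+} H\, d\bar\mu^T - \bb E_\alpha[\int_{\bb R_+} H\, d\bar\mu^T]$, checking along the way that $\bb E_\alpha[\int_{\bb R_+} H\, d\bar\mu^T]$ is within $o(1)$ of $\alpha \int_{1/2}^\infty H(r)\, dr$; the latter follows from stationarity of $\nu_\alpha$ (so $\bb E_\alpha[\mu^{1,T}(\eta_s)] = \mu^{1,T}$-expectation under $\nu_\alpha$, which is $\alpha \lambda_T$) together with Lemma \ref{n-l22} applied to $H$.

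The core of the argument is an exponential Chebyshev / Feynman–Kac estimate. For $\theta \in \bb R$ write $\bb P_\alpha[\,|X_T| > \varrho\,] \le e^{-\theta \varrho \, T/\log T}\big(\bb E_\alpha[e^{\theta (T/\log T) X_T}] + \bb E_\alpha[e^{-\theta (T/\log T) X_T}]\big)$ where $X_T = \int_{\bb R_+} H\, d\bar\mu^T - \alpha\int_{1/2}^\infty H$. Using \eqref{n-62} and the Feynman–Kac formula, $\bb E_\alpha[e^{\theta (T/\log T)\int_0^1 \int H\, d\mu^{1,T}(\eta_s)\, ds}]$ is bounded by $\exp\{\sup_{\text{spec}} (L_T + V_T)\}$ where $V_T(\eta) = \theta (T/\log T) \int_{\bb R_+} H\, d\mu^{1,T}(\eta)$ is the multiplication operator. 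The standard variational bound for the top of the spectrum of $L_T + V_T$ in $L^2(\nu_\alpha)$ gives
\begin{equation*}
\log \bb E_\alpha\big[e^{\theta (T/\log T) X_T}\big] \;\le\;
\sup_{f} \Big\{ \theta \frac{T}{\log T}\, \bb E_{\nu_\alpha}\big[ \big(\textstyle\int H\, d\mu^{1,T} - \alpha\textstyle\int_{1/2}^\infty H\big) f^2 \big] \;-\; \frac{T}{2} D_T(f) \Big\}\;,
\end{equation*}
where $D_T$ is the (un-speeded) Dirichlet form and the supremum is over densities $f^2$ with respect to $\nu_\alpha$. Dividing by $T/\log T$, the key point is that the potential term is a sum over sites $x$ with $|x| \ge T^{1/2}$ of weights of order $(|x|^2 \log T)^{-1}$ times centered occupation variables $\eta(x) - \alpha$; a summation-by-parts / spectral-gap argument shows this is bounded by $C(H)(D_T(f)/\log T)^{1/2}$ times a prefactor that is $o(1)$ as $T\to\infty$ (heuristically, replacing $\eta(x)$ by a local average over a mesoscopic box costs a Dirichlet form term, and the remaining averaged centered field has small $L^2(\nu_\alpha)$ norm because the weights $\sum_{|x|\ge T^{1/2}} |x|^{-4} \ll (\log T)^{-1}$ after the normalization). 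Consequently the whole bracket is $o(1) \cdot \theta \,\|H\|$ uniformly, so $\limsup (\log T/T)\log\bb E_\alpha[e^{\theta(T/\log T)X_T}] = 0$ for every fixed $\theta$, and the same for $-\theta$; letting $\theta \to \infty$ after Chebyshev yields the claimed $-\infty$.

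The main obstacle is the spectral bound on the potential term: one must show that the contribution of the ``tail'' sites $|x| \ge T^{1/2}$ to $\bb E_{\nu_\alpha}[(\int H\, d\mu^{1,T} - \alpha\int_{1/2}^\infty H)f^2]$ is controlled by a vanishing multiple of the Dirichlet form plus a vanishing constant, uniformly over all $f$. This is exactly the content of the estimates underlying ``Lemma 5.1 in \cite{cll}'', to which the proof of this lemma is stated to be similar; concretely one writes $\eta(x) - \alpha$ as a telescoping sum of nearest-neighbor exchanges against $\nu_\alpha$, moves the exchange onto $f^2$ via the reversibility of $\nu_\alpha$, and applies Cauchy–Schwarz, using that the total weight $\frac{1}{2\pi\log T}\sum_{|x| \ge T^{1/2}} |x|^{-2}$ is bounded (by roughly $\|H\|_\infty/2$ by \eqref{n-41}) while the per-bond ``price'' carries the extra factor $T$ from the speeded-up generator, which dominates. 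The only subtlety specific to the region $r \ge 1/2$ is that there is no need for any mesoscopic-block replacement or two-blocks estimate here — the sites are so sparse that the naive telescoping already suffices — which is why this lemma is genuinely easier than the interior estimates and its proof can indeed be left as ``similar to Lemma 5.1 in \cite{cll}.''
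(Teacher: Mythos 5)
Your overall structure---exponential Chebyshev followed by a bound on $\bb E_\alpha[\exp\{\pm\theta (T/\log T) X_T\}]$---is the right one, and the identification of the key mechanism (the per-site polar weight $T/(|x|^2(\log T)^2)$ is uniformly $O((\log T)^{-2})$ on $|x|\ge T^{1/2}$) is correct. Since the paper itself merely cites ``similar to Lemma 5.1 of \cite{cll}'', I cannot certify whether your Feynman--Kac route matches theirs, but it is a valid route to the result.

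However, two remarks. First, the Feynman--Kac/Dirichlet form machinery is not needed at all here, and using it introduces unnecessary complications. Because $e^x$ is convex and $\nu_\alpha$ is stationary, Jensen applied to the time average gives
\begin{equation*}
\bb E_\alpha\Big[\exp\Big\{\theta\,\tfrac{T}{\log T}\int_0^1 W(\eta_s)\,ds\Big\}\Big]\;\le\;\int_0^1\bb E_\alpha\Big[\exp\Big\{\theta\,\tfrac{T}{\log T}\, W(\eta_s)\Big\}\Big]\,ds\;=\;\bb E_{\nu_\alpha}\Big[\exp\Big\{\theta\,\tfrac{T}{\log T}\, W\Big\}\Big]\,,
\end{equation*}
where $W(\eta)=\int H\,d\mu^{1,T}(\eta)-\alpha\int H$. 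The right-hand side is a static moment generating function which factorizes over sites under $\nu_\alpha$; each factor involves the coefficient $c_x=\theta T H(\sigma_T(x))/(2\pi|x|^2(\log T)^2)=O(\theta(\log T)^{-2})$, and $\sum_x c_x^2=O(\theta^2 T (\log T)^{-4})$. After multiplication by $\log T/T$ one gets $O(\theta^2(\log T)^{-3})\to 0$, so no spectral/Dirichlet estimate is required. This makes clear in what sense the lemma is ``easy''.

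Second, if one does insist on the Feynman--Kac route, your closing claim that ``the naive telescoping already suffices --- there is no need for any mesoscopic-block replacement'' is in tension with the middle of your own argument (where you replace $\eta(x)$ by a local average over a mesoscopic box) and is not literally correct: in SSEP the integral $\int(\eta(x)-\alpha)f\,d\nu_\alpha$ cannot be rewritten as a sum of bond-exchange terms without introducing a reference site or an averaging block, because the exchange dynamics conserves particle number and provides no single-site move. Some averaging device (a one-block replacement, or the Jensen/static argument above which averages implicitly through the product structure of $\nu_\alpha$) is genuinely necessary; what is dispensed with in the region $r\ge 1/2$ is the two-blocks comparison, not the block replacement itself.
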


Let $A$ be a finite subset of $\bb Z^2$, and denote by $\eta_A$ the
local function $\eta_A = \prod_{x\in A} \eta(x)$. For a continuous
function $H: (0,\infty) \to\bb R$ with compact support and $j=1$, $2$,
let
\begin{equation*}
W^{H,A}_{T,j} (\eta) \;=\; \frac 1{\log T} \sum_{x\in \bb Z^2_*} 
\frac 1{|x|^2} \, \frac {x_j^2}{|x|^2} \, H(\sigma_T(x))\, 
\{ \eta_{A+x} - \alpha^{|A|} \} \;,
\end{equation*}
where $A+x = \{y+x: y\in A\}$.


\begin{lemma}
\label{n-l2}
For every finite subset $A$ of $\bb Z^2$, $\varrho>0$, $j=1$, $2$, and
continuous function $H: [1/2,\infty) \to\bb R$ with compact support,
\begin{equation*}
\limsup_{T\to\infty} \frac {\log T}T \log \bb P_{\alpha} \Big[ \, \Big| 
\int_0^1 W^{H,A}_{T,j} (\eta_s)  \, ds \, \Big| \, > \varrho \, \Big]
\; =\; - \infty\;. 
\end{equation*}
\end{lemma}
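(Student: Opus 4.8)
\textbf{Proof proposal for Lemma \ref{n-l2}.}

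The plan is to follow the strategy of Lemma 5.1 in \cite{cll}, which is the standard route for superexponential estimates: one introduces an exponential martingale, applies a Feynman--Kac bound, and then uses the spectral gap / variational characterization of the Dirichlet form to show that the time-integrated functional is superexponentially small. Concretely, by the exponential Chebyshev inequality and the usual replacement of $\bb P_\alpha$-probabilities by their symmetrized counterpart, it suffices to show that for every $a>0$,
\begin{equation*}
\limsup_{T\to\infty} \frac{\log T}{T} \log \bb E_\alpha \Big[ \exp\Big\{ \frac{a\, T}{\log T} \Big| \int_0^1 W^{H,A}_{T,j}(\eta_s)\, ds \Big| \Big\} \Big] \;\le\; 0 \;,
\end{equation*}
and for this it is enough, by Feynman--Kac together with the variational formula for the largest eigenvalue, to bound
\begin{equation*}
\sup_f \Big\{ \pm \big\< W^{H,A}_{T,j}, f \big\>_{\nu_\alpha} \;-\; \frac{\log T}{a}\, \mc D_T(\sqrt f) \Big\}
\end{equation*}
by something that tends to $0$ faster than $(\log T)/T$, where $\mc D_T$ is the Dirichlet form of the speeded-up generator $L_T$ and the supremum runs over densities $f$ with respect to $\nu_\alpha$. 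The speed-up factor $T$ in $L_T$ means $\mc D_T = T\, \mc D_1$, so the ``cost'' term carries a factor $T\log T$, which is exactly what is needed to beat the linear-in-$T$ scale in the exponent.

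The heart of the matter is the one-block / two-blocks type estimate: one must show that, after paying the Dirichlet form, the function $W^{H,A}_{T,j}$ can be replaced by its conditional expectation given the density in a mesoscopic box, and that this conditional expectation is negligible because the background measure is $\nu_\alpha$ and $\eta_{A+x} - \alpha^{|A|}$ has mean zero under $\nu_\alpha$. Here the weight $\frac{1}{|x|^2}\frac{x_j^2}{|x|^2} H(\sigma_T(x))$ is bounded and, crucially, $\sum_{x} \frac{1}{|x|^2}$ over the support of $H(\sigma_T(\cdot))$ is of order $\log T$, so that $W^{H,A}_{T,j}$ is an average of order-one local functions against a probability-like weight of total mass $O(1)$. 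Summation by parts / the moving-particle lemma lets one transport the factors $\eta(x+y)$ at the cost of Dirichlet form, and then the independence structure of $\nu_\alpha$ kills the resulting mean-zero averages. The restriction that the support of $H$ lies in $[1/2,\infty)$, i.e. that we only see sites with $|x|\ge T^{1/2-o(1)}$, is what makes the combinatorics of the moving-particle bound uniform: the boxes are far from the origin, so the local statistics are genuinely $\nu_\alpha$-distributed and the entropy versus Dirichlet-form bookkeeping is the same as in \cite{cll}.

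The main obstacle I expect is controlling the two-blocks step with the precise weight $\frac{x_j^2}{|x|^4}$ and verifying that all error terms are uniform in the configuration $\eta$ (the statement asks for $\sup_\eta$-type control through the superexponential estimate, i.e. uniformity in the initial condition is automatic but uniformity of the replacement constants over the spatial scales is not). Concretely one has to check that the Lipschitz regularity of $H$, combined with Lemma \ref{n-l22}-type Riemann-sum comparisons, allows the replacement of $H(\sigma_T(x))$ by $H(\sigma_T(y))$ for $y$ in a mesoscopic annulus $\bb A_{T,\delta}(x)$ up to an $O(\delta)$ error, uniformly, and then to let $\delta\to 0$ after $T\to\infty$. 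Once this is in place, the argument is a routine adaptation of Lemma 5.1 in \cite{cll}, and I would simply indicate the modifications rather than reproduce the full computation.
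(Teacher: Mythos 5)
The paper offers no argument for Lemma \ref{n-l2} beyond the remark that its proof is similar to that of Lemma 5.1 in \cite{cll}, and your skeleton (exponential Chebyshev, Feynman--Kac, the variational formula with the Dirichlet form of the speeded-up generator, block replacement) is indeed the skeleton of that proof. The gap is at precisely the step that carries the content of the lemma. After the one-block (or two-block) replacement, $\eta_{A+x}$ is replaced by the canonical expectation $\rho_x^{|A|}$, where $\rho_x$ is the empirical density in a block around $x$; this is \emph{not} killed by the fact that $\eta_{A+x}-\alpha^{|A|}$ has mean zero under $\nu_\alpha$, because in the eigenvalue variational problem the density $f$ is arbitrary and may perfectly well charge configurations whose block densities differ from $\alpha$. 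What makes the lemma true is that on the support of $H(\sigma_T(\cdot))$ one has $|x|\ge T^{1/2}$, so the relevant region contains at least order $T$ sites while the per-site weight $x_j^2|x|^{-4}H(\sigma_T(x))$ is at most $CT^{-1}$: sustaining block densities away from $\alpha$ there carries a (static plus dynamic) cost of order at least the volume, hence much larger than the large deviations scale $T/\log T$, whereas the possible gain in $\langle W^{H,A}_{T,j},f\rangle$ is only of order one. This volume-versus-cost comparison is the actual content of Lemma 5.1 of \cite{cll} (and of Lemma \ref{n-l1} above), and it is exactly where the hypothesis on the support of $H$ enters.

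Your stated reason for the restriction to $[1/2,\infty)$ --- uniformity of the moving-particle combinatorics and the claim that far from the origin the local statistics are ``genuinely $\nu_\alpha$-distributed'' --- cannot be the mechanism: under $\bb P_\alpha$ the process is stationary, so the fixed-time marginals are $\nu_\alpha$ everywhere, near the origin as well, and your argument as written would apply verbatim to $H$ supported in $(0,1/2)$, where the conclusion is false. Indeed, by the lower bound of Theorem \ref{mt01}, events such as $\{\bar\mu^T(H)-\alpha\int H\,dr>\varrho\}$ with $H$ supported in $(0,1/2)$ have probability of order $e^{-CT/\log T}$, not superexponentially small; the whole point of the large deviations principle is that density profiles may deviate from $\alpha$ at finite cost in that inner region. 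So while the route through Feynman--Kac and block estimates is the intended one, the proposal is missing the identification of why the estimate holds specifically in the region $\{|x|\ge T^{1/2}\}$, and the step where mean-zero-ness under $\nu_\alpha$ is invoked to conclude would fail as stated.
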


Since any local function can be expressed as a linear combination of
function of type $\eta_A$ this result extends to all local functions.

Consider a continuous, non-negative function $J: \bb R_+ \to \bb R$
with compact support in $(0,\infty)$.  Let
$W_{T}^{J,\delta}$ be the local function defined as
\begin{equation}
\label{n-19}
W_{T}^{J,\delta} (\eta) \;=\;
\frac 1{\log T} \sum_{j=1}^2
\sum_{x\in \bb Z} J (\sigma_T(x)) \, \frac
{x^2_j}{|x|^4}  \, \Big\{ \big[\eta(x+e_j) - \eta(x)\big]^2
\;-\; 2 \, \sigma \big(m_{\delta,T} (x,\eta) \big)\Big\}\;,
\end{equation}
where
\begin{equation*}
m_{\delta,T} (x,\eta) \;=\;
\begin{cases}
\mu^{1,T}\big( \psi_{\sigma_T(x), \delta} \big) 
& \text{ if $\sigma_T(x) < 1/2$,} \\
\alpha & \text{ if $\sigma_T(x) \ge 1/2$.}
\end{cases}
\end{equation*}

\begin{proposition}
\label{n-l6}
Let $J: \bb R_+ \to \bb R$ be a non-negative function of class $C^1$
with compact support in $(0,\infty)$. For every $\varrho>0$, 
\begin{equation*}
\limsup_{\delta \to 0} \limsup_{T\to\infty} \frac {\log T}T 
\, \log \bb P_{\alpha} \Big[ \, 
\int_0^1 W_{T}^{J,\delta} (\eta_s) \, ds \, \, > \varrho \,
\Big] \; =\; - \infty\;. 
\end{equation*}
\end{proposition}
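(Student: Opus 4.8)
The plan is to follow the standard Guo--Papanicolaou--Varadhan scheme for superexponential two-blocks estimates, adapted to the polar scaling. By the exponential Chebyshev inequality and the Feynman--Kac formula, it suffices to bound, for each $a>0$,
\begin{equation*}
\limsup_{\delta\to0}\limsup_{T\to\infty}\frac{\log T}{T}\,\log
\bb E_{\nu_\alpha}\Big[\exp\Big\{\frac{aT}{\log T}\int_0^1 W_T^{J,\delta}(\eta_s)\,ds\Big\}\Big]\;\le\;0
\end{equation*}
(the $\varrho$ then disappears when one sends the Chebyshev parameter to infinity, using that $W_T^{J,\delta}$ is uniformly bounded). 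By the Rayleigh--Ritz variational formula this reduces to showing that the largest eigenvalue of $L_T + (aT/\log T)\,W_T^{J,\delta}$ against $\nu_\alpha$ is $o(T/\log T)$, i.e.\
\begin{equation*}
\sup_f\Big\{ \frac{aT}{\log T}\,\<W_T^{J,\delta},f\>_{\nu_\alpha} \;-\; \tfrac{T}{2}\,\mathfrak D_T(f)\Big\}\;=\;o\Big(\tfrac{T}{\log T}\Big),
\end{equation*}
where the supremum is over densities $f$ with respect to $\nu_\alpha$ and $\mathfrak D_T$ is the (unspeeded) Dirichlet form. Dividing by $T$, the task is to show the bracket, without the $T$, is $o(1/\log T)$; equivalently, for every density $f$,
\begin{equation*}
\frac{a}{\log T}\,\<W_T^{J,\delta},f\>_{\nu_\alpha}\;\le\;\tfrac12\,\mathfrak D_T(f)\;+\;o(1/\log T).
\end{equation*}

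The first step is to replace, inside $W_T^{J,\delta}$, the single-bond term $[\eta(x+e_j)-\eta(x)]^2$ by a local average over the annulus $\bb A_{T,\delta}(x)$ and the density $m_{\delta,T}(x,\eta)$ by the empirical density of $\eta$ on that annulus; the error in the exponential is controlled by \eqref{n-23}, the summation-by-parts argument behind Lemma~\ref{n-l14}, and the estimate \eqref{n-29} relating $\phi_{r,\delta}$ and $\psi_{r,\delta}$, together with the elementary Dirichlet-form bound on exchanging a particle with a neighbour. This reduces matters to the genuine two-blocks expression
\begin{equation*}
\frac{1}{\log T}\sum_j\sum_x J(\sigma_T(x))\,\frac{x_j^2}{|x|^4}\,
\Big\{ \big(\text{block average of }[\eta(0)-\eta(e_j)]^2\text{ around }x\big)\;-\;2\sigma\big(\text{block density around }x\big)\Big\}.
\end{equation*}
Because $a(1-a)=E_{\nu_a}[(\eta(0)-\eta(e_j))^2]/2$, inside each annulus the bracket is, up to a negligible correction, the fluctuation of the local function $g(\eta)=[\eta(0)-\eta(e_j)]^2$ around its expectation under the grand-canonical measure at the observed density. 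Here I would invoke the one-block / equivalence-of-ensembles estimate on each annulus $\bb A_{T,\delta}(x)$: conditioned on the number of particles, the canonical and grand-canonical expectations of $g$ differ by $O(1/\ell)$ where $\ell$ is the annulus size, which is a positive power of $T$, so this is superexponentially small. The concavity of $\beta\mapsto E_{\nu_\beta}[(\eta(0)-\eta(e_j))^2]=2\beta(1-\beta)$ is used exactly here, via Jensen, to control the replacement of $\sigma$ evaluated at a block average of densities by the block average of $\sigma$ of local densities, with the correct inequality sign; this is why the proposition is stated only for this particular cylinder function.

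The main obstacle, and the genuinely new point compared with \cite{cll}, is that the sum runs over all of $\bb Z^2_*$ rather than over the ball $\{|x|<T^{1/2-\delta}\}$, so one must show that the contribution of the far region $|x|\gtrsim T^{1/2}$ is harmless even though the perturbation there is not absorbed by a localized Dirichlet form. The point is that for $\sigma_T(x)\ge1/2$ the "block density'' $m_{\delta,T}(x,\eta)$ has been frozen to the constant $\alpha$, so on that region $W_T^{J,\delta}$ is a sum of the centred local functions $[\eta(x+e_j)-\eta(x)]^2-2\sigma(\alpha)$ weighted by $(x_j^2/|x|^4)J(\sigma_T(x))$; since these are centred under $\nu_\alpha$, which is invariant, one can estimate the corresponding exponential moment directly — the weights are summable after division by $\log T$ (by Lemma~\ref{n-l22} the relevant normalized sum is bounded), and a crude large-deviation bound for a weighted sum of bounded, centred, finitely-dependent random variables under the invariant measure gives the required $o(T/\log T)$ without needing any Dirichlet-form input. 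For the near region $|x|\lesssim T^{1/2}$ the density is genuinely dynamical, but there the standard localized GPV estimate as in \cite{cll} applies verbatim, with the annuli sitting inside a ball of radius $T^{1-\delta}$; Remark~\ref{n-l11} is where the authors presumably explain why one cannot push the dynamical perturbation all the way out, which is consistent with this dichotomy. Assembling the near-region bound (relative entropy plus localized Dirichlet form, à la \cite{cll}) with the far-region bound (invariant-measure large deviations) and sending first $T\to\infty$ and then $\delta\to0$ yields the claim.
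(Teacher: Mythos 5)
Your overall three-part strategy matches the paper's in spirit, but you have overlooked a region that neither of your two mechanisms covers. The paper's proof decomposes $\bb Z^2_*$ into \emph{three} zones, $\bb R^{(1)}_{T,\kappa}=\{\sigma_T(x)<1/2-\kappa\}$, $\bb R^{(2)}_{T,\kappa}=\{1/2-\kappa\le\sigma_T(x)\le 1/2\}$ and $\bb R^{(3)}_{T}=\{\sigma_T(x)>1/2\}$, whereas you split at a single threshold $\sigma_T(x)\approx 1/2$. The localized GPV estimate inherited from \cite{cll} (Lemma~\ref{n-l7} in the present paper) only controls the dynamical term for $\sigma_T(x)<1/2-\kappa$ with some fixed $\kappa>0$; it cannot be pushed all the way to $\sigma_T(x)=1/2$, for the reasons explained in Remark~\ref{n-l11}. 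Your static argument for the far region exploits $\max_x |x|^{-2}=O(T^{-1})$, which is only available for $|x|\ge T^{1/2}$. On the intermediate annulus $T^{1/2-\kappa}\le|x|\le T^{1/2}$, the weight is of order $T^{-1+2\kappa}$, too large for the static bound, and the comparison density $m_{\delta,T}(x,\eta)$ is still dynamical (it is $\mu^{1,T}(\psi_{\sigma_T(x),\delta})$, not the constant $\alpha$), so the local summand is not centred under $\nu_\alpha$ either. The paper handles this gap deterministically: since $|\Psi_{\delta,T}(j,x,\eta)|\le 1$, one simply chooses $\kappa=\kappa(J,\varrho/3)$ so small that the normalized weight mass \eqref{n-18} over $\bb R^{(2)}_{T,\kappa}$ is below $\varrho/3$, absorbing this region's contribution into the threshold $\varrho$. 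Your proposal needs this extra step to close.

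Two further remarks. First, your treatment of the near region is essentially identical to the paper's Lemma~\ref{n-l12}: invoke Lemma~\ref{n-l7} for the mesoscopic average $M^{r,\theta}_{T,\varepsilon}$, then use concavity of $\sigma$ via Jensen (and the summation-by-parts estimate \eqref{n-23}) to pass from the mesoscopic average to the macroscopic density $m_{\delta,T}$; this is exactly \eqref{n-15}. Second, for the outer region your "static" argument (Jensen in time followed by an exponential moment bound under $\nu_\alpha$ for a weighted sum of centred, finitely-dependent local functions) does work and is a slightly more elementary route than the paper's appeal to Lemma~\ref{n-l2}, which is a GPV-style superexponential estimate over $[1/2,\infty)$. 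Either route yields the needed $-\infty$ rate, so this is a cosmetic difference rather than a gap.
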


\begin{remark}
\label{n-l18}
The concavity of the mobility $\sigma$ plays an important role in the
proof of this proposition. We are not able to prove the so-called
superexponential two-blocks estimate, but only a mesoscopic
superexponential estimate. The concavity of $\sigma$ permits to insert
inside $\sigma$ macroscopic averages through Jensen's inequality. This
argument provides an inequality which, fortunately, goes in the right
direction.

For the same reasons, we are not able to prove this proposition for
the absolute value of the time integral.
\end{remark}

The proof of this proposition is divided in several steps.  Denote
by $\bb R^{(1)}_{T,\kappa}$, $\bb R^{(2)}_{T,\kappa}$, $\bb
R^{(3)}_{T}$, $0<\kappa<1/2$, the subsets of $\bb Z^2$ defined by
\begin{gather*}
\bb R^{(1)}_{T,\kappa} \;=\;
\{x\in \bb Z^2: \sigma_T(x) < 1/2 -\kappa\}\;, \quad
\bb R^{(3)}_{T} \;=\;
\{x\in \bb Z^2: \sigma_T(x) > 1/2\} \;, \\
\bb R^{(2)}_{T,\kappa} \;=\;
\{x\in \bb Z^2: (1/2)-\kappa \le \sigma_T(x) \le 1/2\}\;. \\
\end{gather*}

\smallskip\noindent{\bf A. The region $\bb R^{(3)}_{T}$.} On the
region $\bb R^{(3)}_{T}$, $m_{\delta,T} (x,\eta) = \alpha$. 
Let $W_{T}^{(3), J}$ be the local function defined as
\begin{equation*}
W_{T}^{(3), J} (\eta) \;=\; \frac 1{\log T} \sum_{j=1}^2
\sum_{x\in \bb R^{(3)}_{T}} J (\sigma_T(x)) \, \frac
{x^2_j}{|x|^4}  \, \Big\{ \big[\eta(x+e_j) - \eta(x)\big]^2
\;-\; 2 \, \sigma (\alpha)\Big\}\;.
\end{equation*}
By Lemma \ref{n-l2}, for every $\varrho>0$,
\begin{equation}
\label{n-17}
\limsup_{T\to\infty} \frac {\log T}T 
\, \log \bb P_{\alpha} \Big[ \, \Big|
\int_0^1 W_{T}^{(3), J} (\eta_s) \, ds 
\, \Big| \, > \varrho \, \Big] \; =\; - \infty\;.
\end{equation}

\smallskip\noindent{\bf B. The region $\bb R^{(2)}_{T}$.}  Let $\kappa
= \kappa(J,c)$, $c>0$, be such that
\begin{equation}
\label{n-18}
\frac 1{\log T}\, \sum_{x\in \bb R^{(2)}_{T,\kappa}} 
\frac{1}{|x|^2}\, J( \sigma_T(x) ) \;\le\; c\;\cdot
\end{equation}
Taking $\kappa = \kappa(J,\varrho/3)$ in the definition of the regions
$\bb R^{(i)}_{T}$, the contribution of the region $\bb R^{(2)}_{T}$ to
the sum defining $W_{T,j}^{J,\delta} (\eta_s)$ is bounded in
absolute value by $\varrho/3$ because the absolute value of the
expression inside braces in \eqref{n-19} is bounded by $1$.

\smallskip\noindent{\bf C. The region $\bb R^{(1)}_{T}$.}  Denote by
$\bb T_\pi$ the one-dimensional torus $[-\pi,\pi)$, and by $\Theta(x)$
the angle of $x\in \bb R^2\setminus \{0\}$ so that $x = (\, |x|\, \cos
\Theta(x) \,,\, |x|\, \sin \Theta(x)\, )$.

Fix a positive function $q : (0,1] \to (0,1]$ decreasing to $0$ slower
than the identity: $\lim_{\varepsilon\to 0}
q(\varepsilon)/\varepsilon=\infty$.  For $0<\varepsilon <r_0$, let
\begin{align*}
& \iota_+ \;=\; \iota_+(\varepsilon , r_0, T)
\;=\; \frac 1{\log T} \log \frac{T^{r_0} + T^{\varepsilon}}{T^{r_0}} \; , \\
& \quad \iota_- \;=\; \iota_-(\varepsilon , r_0, T)
\;=\; \frac 1{\log T} \log \frac{T^{r_0}}{T^{r_0} - T^{\varepsilon}}\;\cdot
\end{align*}
Denote by $M^{r,\theta}_{T, \varepsilon} (\eta)$, $\theta \in \bb
T_\pi$, the weighted average of particles in the polar cube $[r
- \iota_- , r + \iota_{+}] \times [\theta - q(\varepsilon) , \theta+
q(\varepsilon)]$ for a configuration $\eta$:
\begin{equation}
\label{n-16}
M^{r,\theta}_{T, \varepsilon} (\eta)
\;=\; \frac {1} {2 (\iota_{+} + \iota_-) q(\varepsilon) \log T}
\sum_{\substack{T^{r}-T^\varepsilon\le |z| \le
    T^r+T^\varepsilon\\ \theta - q(\varepsilon) \le \Theta(z) \le
    \theta + q(\varepsilon) }}
\frac {\eta (z)}{|z|^2}\; \cdot
\end{equation}
Note that this average is performed over a mesoscopic polar square.

Let $W_{T}^{J,\kappa,\varepsilon}$ be the local function defined as
\begin{equation*}
W_{T}^{J,\kappa,\varepsilon} (\eta) \;=\; \frac 1{\log T}
\sum_{x\in \bb R^{(1)}_{T,\kappa}} \sum_{j=1}^2 J (\sigma_T(x)) \, \frac
{x^2_j}{|x|^4}  \, \Psi_{T,j,\varepsilon}(x,\eta) \;,
\end{equation*}
where $\Psi_{T,j,\varepsilon}(x,\eta)$ is given by
\begin{equation*}
\Psi_{T,j,\varepsilon}(x,\eta) \;=\; \big[\eta(x+e_j) - \eta(x)\big]^2
\;-\; 2 \, \sigma \Big( M^{\sigma_T(x), \Theta(x)}_{T,\varepsilon}\Big)\;.
\end{equation*}
Next lemma is the superexponential estimate presented in Lemma 4.1 of
\cite{cll}. 

\begin{lemma}
\label{n-l7}
For any function $J$ satisfying the assumptions of Proposition
\ref{n-l6}, $0<\kappa<1/2$, and $\varrho>0$, 
\begin{equation*}
\limsup_{\varepsilon \to 0}\limsup_{T\to \infty}
\frac{\log T}T \log \bb P_{\alpha} \Big[ \, \Big\vert
\int_0^1  W_{T}^{J,\kappa,\varepsilon} (\eta_s)\, ds
\Big\vert \, >\, \varrho \, \Big] \; =\; - \infty\; .
\end{equation*}
\end{lemma}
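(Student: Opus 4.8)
The statement is Lemma~4.1 of \cite{cll}; the plan is to reproduce its proof, which follows the Guo--Papanicolaou--Varadhan scheme adapted to the logarithmic polar scaling. First I would pass to exponential moments: by the exponential Chebyshev inequality, for every $\gamma > 0$,
\begin{equation*}
\bb P_\alpha\Big[\int_0^1 W_T^{J,\kappa,\varepsilon}(\eta_s)\,ds > \varrho\Big]
\;\le\; \exp\Big\{-\,\gamma\,\tfrac{T}{\log T}\,\varrho\Big\}\;
\bb E_\alpha\Big[\exp\Big\{\gamma\,\tfrac{T}{\log T}\int_0^1 W_T^{J,\kappa,\varepsilon}(\eta_s)\,ds\Big\}\Big]\;,
\end{equation*}
and the analogous bound for $-\,W_T^{J,\kappa,\varepsilon}$. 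Since $\nu_\alpha$ is reversible for $L_T$, the Feynman--Kac formula followed by the spectral estimate for the top eigenvalue gives
\begin{equation*}
\frac{\log T}{T}\,\log \bb E_\alpha\big[\,\cdots\,\big]
\;\le\; \sup_{f:\,\langle f^2\rangle_{\nu_\alpha}=1}
\Big\{\, \gamma\,\big\langle W_T^{J,\kappa,\varepsilon}\, f^2\big\rangle_{\nu_\alpha}
\;-\; \mf c\,\log T\;\mf D(f)\,\Big\}\;,
\end{equation*}
where $\mf D(f) = \frac14\sum_{j=1}^2\sum_{x\in\bb Z^2}\int [f(\sigma^{x,x+e_j}\eta)-f(\eta)]^2\,\nu_\alpha(d\eta)$ is the Dirichlet form and $\mf c$ a universal constant. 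Hence it suffices to prove that, for each fixed $\gamma>0$, the right-hand side tends to $0$ as first $T\to\infty$ and then $\varepsilon\to0$; letting $\gamma\to\infty$ afterwards yields the $-\infty$ in the statement.

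For the static estimate I would localize. Writing $x_j^2/|x|^4 = (1/|x|^2)(x_j^2/|x|^2)$, each term in $W_T^{J,\kappa,\varepsilon}$ has weight $O(1/|x|^2)$, and $\frac1{\log T}\sum_{x\in\bb R^{(1)}_{T,\kappa}} |x|^{-2}\,|J(\sigma_T(x))|$ stays bounded as $T\to\infty$ (this is the content of the normalization together with \eqref{n-41}), only bonds with $|x|$ in a window $[T^{a-\delta},T^{1/2-\kappa}]$ contributing, where $[a,b]\supset \mathrm{supp}\,J$. Using the Lipschitz regularity of $J$ and of $r\mapsto x_j^2/|x|^2$ together with a summation by parts, exactly as in \eqref{n-23} and Lemma~\ref{n-l14}, I would replace the pointwise weights by weights that are constant on the mesoscopic polar boxes underlying $M^{r,\theta}_{T,\varepsilon}$ (radial width of order $T^\varepsilon$, angular width $q(\varepsilon)$), at the price of an error bounded in absolute value, uniformly in $\eta$ and $f$, by a quantity that vanishes with the mesh and is absorbed at the end. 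After this step, $\langle W_T^{J,\kappa,\varepsilon} f^2\rangle_{\nu_\alpha}$ becomes a convex combination, over mesoscopic polar boxes $B$, of box-averages of $\langle [\eta(x+e_j)-\eta(x)]^2 - 2\,\sigma(M_B)\rangle_{f^2\nu_\alpha}$, where $M_B$ is the empirical density of $B$, i.e.\ precisely $M^{\sigma_T(x),\Theta(x)}_{T,\varepsilon}$.

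The core is then a one-block estimate on each box $B$. Conditioning on the configuration outside $B$ and decomposing $f^2\,\nu_\alpha$ along the canonical (microcanonical) measures on $B$, the problem reduces, at each density level, to bounding the expectation under the canonical measure of $\frac1{|B|}\sum_{x\in B}\sum_j\big([\eta(x+e_j)-\eta(x)]^2 - 2\,\sigma(k/|B|)\big)$, paying the Dirichlet form of $\nu_\alpha$ restricted to $B$. Since $E_{\nu_\beta}[(\eta(0)-\eta(e_j))^2]=2\,\sigma(\beta)$, the equivalence of ensembles and the spectral-gap (or logarithmic-Sobolev) bound for symmetric exclusion on $B$ make this conditional average $o(1)$ as $|B|\to\infty$, provided the restricted Dirichlet form is controlled by $\mf D(f)$ times the appropriate constant. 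This balance is the main obstacle and the reason the argument only reaches the mesoscopic scale $T^\varepsilon$ rather than a genuine microscopic-to-macroscopic two-blocks estimate (cf.\ Remark~\ref{n-l18}): the polar boxes $B$ sit at distance of order $T^r$ from the origin, so their diameter and hence the reciprocal of their spectral gap grow polynomially in $T$, which competes against the $\log T$ prefactor multiplying $\mf D(f)$; one must take $B$ small enough, in the radial variable, that $\mf c\,\log T\,\mf D(f)$ dominates the summed one-block errors, yet with $T^\varepsilon\to\infty$ so that the law of large numbers for $M_B$ still applies. Choosing $q(\varepsilon)$ with $q(\varepsilon)/\varepsilon\to\infty$ as prescribed and sending $T\to\infty$ and then $\varepsilon\to0$ drives every error term to zero, which completes the proof along the lines of \cite[Lemma~4.1]{cll}.
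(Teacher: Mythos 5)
The paper itself does not prove this lemma: it simply quotes it as Lemma 4.1 of \cite{cll}, so a citation would have sufficed, and your opening sentence is in that sense aligned with the paper. As a standalone argument, however, your sketch has a genuine gap exactly at its core step. The reduction via exponential Chebyshev, Feynman--Kac and the variational formula with the Dirichlet form multiplied by $\log T$ is correct and standard, but the replacement of the local function by $2\,\sigma\big(M^{\sigma_T(x),\Theta(x)}_{T,\varepsilon}\big)$ is precisely the nontrivial content of \cite[Lemma 4.1]{cll}, and the mechanism you propose for it cannot close quantitatively. The polar box in \eqref{n-16} has radial thickness of order $T^{\varepsilon}$ but angular extent of order $q(\varepsilon)\,|x|\sim q(\varepsilon)T^{r}$, so its diameter, hence the inverse of its spectral gap, grows polynomially in $T$ (like $T^{2r}q(\varepsilon)^2$), while the only Dirichlet budget available at rate $T/\log T$ is a factor $\log T$. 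No choice of the radial width repairs this, since the diameter is set by the angular width; your suggested remedy (``take $B$ small enough in the radial variable'') therefore does not address the obstruction, and the prescription $q(\varepsilon)/\varepsilon\to\infty$, which in your framing only worsens the gap, is invoked but never actually used. Declaring that the parameters can be tuned so that ``every error term'' vanishes is an assertion of the lemma, not a proof of it.

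What makes the estimate true is a structural cancellation that is invisible in the ``spectral gap of $B$ versus $\log T$'' bookkeeping: in the moving-particle (path) estimates comparing microscopic block averages with the average over the polar arc, a displacement of length $d\lesssim q(\varepsilon)T^{r}$ costs of order $d^{2}$ per unit weight, and this $T^{2r}$ is exactly compensated by the per-site weight $|x|^{-2}\approx T^{-2r}$ appearing in $W_{T}^{J,\kappa,\varepsilon}$, leaving an error of order $q(\varepsilon)^{2}/\log T$ relative to the budget. This is why polar boxes that are macroscopic in the angle but only mesoscopic ($T^{\varepsilon}$) in the radius are affordable, and why a macroscopic radial width (which would give $d^{2}\sim T^{2(r+\delta)}$ against the same weight) is not --- the latter being the reason the genuine two-blocks estimate is out of reach, cf.\ Remark \ref{n-l18}, which in any case refers to Proposition \ref{n-l6} rather than to this lemma. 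Since your proposal neither performs this accounting nor supplies any substitute for it, the key step remains unproved; to stay faithful to the paper you should either cite \cite[Lemma 4.1]{cll} outright or reproduce its path/two-blocks argument in which the $|x|^{-2}$ weights cancel the squared angular displacement.
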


To replace the average $M^{\sigma_T(x), \Theta(x)}_{T,\varepsilon}$
over a mesoscopic square by a macroscopic object we use the concavity
of $\sigma$. Fix a non-negative, Lipschitz-continuous function $J: \bb
R_+ \to \bb R$ whose support is contained in $(a,b)$ for
$0<a<b<\infty$. We claim that there exists a contant $C_0$, depending
only on $J$, such that for all $0< \varepsilon \le \delta < \kappa/2$,
$\kappa\le \min\{1/2, a\}$,
\begin{equation}
\label{n-15}
\begin{aligned}
& \frac 2 {\log T} \sum_{j=1}^2
\sum_{x\in \bb R^{(1)}_{T,\kappa}} J (\sigma_T(x)) \, \frac
{x^2_j}{|x|^4}  \, \sigma \Big( M^{\sigma_T(x),
  \Theta(x)}_{T,\varepsilon}\Big) \\
&\qquad \;\le\; \frac {2} {\log T} 
\sum_{x\in \bb R^{(1)}_{T,\kappa}} J (\sigma_T(x)) \, \frac
{1}{|x|^2}  \, \sigma \big(m_\delta (\sigma_T(x),\eta)\big)
\;+\; C_0 \delta \;+\; o_T(1) \;,
\end{aligned}
\end{equation}
where $o_T(1) \to 0$ as $T\to\infty$, uniformly over $\eta$.

To prove this assertion, on the left-hand side, sum in $j$ to replace
$\sum_j x^2_j$ by $|x|^2$. Add and subtract an average of $J$ over the
set $\bb A_{T,\delta}(x)$ introduced just above \eqref{n-23}. By
\eqref{n-23}, performing a summation by parts, we conclude that for
$\delta\le \kappa/2$, the left-hand side of \eqref{n-15} is bounded
above by
\begin{equation*}
\frac 2 {\log T} \sum_{y\in \bb R^{(1)}_{T,\kappa-\delta}} 
\frac {1}{|y|^2} \, J (\sigma_T(y)) \,
\frac 1 {4 \pi \delta \log T} \sum_{x\in \bb A_{T,\delta}(y) } 
\frac {1}{|x|^2} \, \sigma \Big( M^{\sigma_T(x), \Theta(x)}_{T,\varepsilon}\Big) 
\;+\; C(J) \, \delta\;,
\end{equation*}
where the sum over $x$ is also restricted to the set $\bb
R^{(1)}_{T,\kappa}$. 

The sum for $y$ such that $T^{(1/2) -\kappa-\delta} \le |y| \le
T^{(1/2) -\kappa+\delta}$ is bounded by $C(J) \delta$. We may thus
remove these terms from the sum by paying this price. For $y$ such
that $|y| \le T^{(1/2) -\kappa-\delta}$ we may remove in the second
sum the restriction that $x \in \bb R^{(1)}_{T,\kappa}$. After
removing this restriction we may insert in the first sum the term $y$
such that $T^{(1/2) -\kappa-\delta} \le |y| \le T^{(1/2) -\kappa}$ by
paying an extra error bounded by $C(J) \delta$. This shows that the
previous sum is less than or equal to
\begin{equation*}
\frac 2 {\log T} \sum_{y\in \bb R^{(1)}_{T,\kappa}} 
\frac {1}{|y|^2} \, J (\sigma_T(y)) \,
\frac 1 {4 \pi \delta \log T} \sum_{x\in \bb A_{T,\delta}(y) } 
\frac {1}{|x|^2} \, \sigma \Big( M^{\sigma_T(x), \Theta(x)}_{T,\varepsilon}\Big) 
\;+\; C(J) \, \delta\;.
\end{equation*}

Substituting $x$ by $y$, as $\sigma$ is concave, the previous
expression is bounded above by
\begin{equation*}
\frac 2 {\log T} \sum_{x\in \bb R^{(1)}_{T,\kappa}} 
\frac {1}{|x|^2} \, J (\sigma_T(x)) \, \sigma \Big(
\frac 1 {4\pi \delta \log T} \sum_{y\in \bb A_{T,\delta}(x) } 
\frac {1}{|y|^2} \, M^{\sigma_T(y), \Theta(y)}_{T,\varepsilon}\Big) 
\;+\; C(J) \, \delta\;.
\end{equation*}
Recall the definition \eqref{n-16} of $M^{\sigma_T(y),
  \Theta(y)}_{T,\delta}$ and to sum by parts inside $\sigma$ to bound
the previous expression by.
\begin{equation*}
\frac 2 {\log T} \sum_{x\in \bb R^{(1)}_{T,\kappa}} 
\frac {1}{|x|^2} \, J (\sigma_T(x)) \, \sigma \Big(
\mu^{1,T}\big( \phi_{\sigma_T(x), \delta} \big) \Big) 
\;+\; C(J) \, \delta\;.
\end{equation*}
To complete the proof of \eqref{n-15}, it remains to recall
\eqref{n-29} to replace $\phi_{\sigma_T(x), \delta}$ by
$\psi_{\sigma_T(x), \delta}$ inside $\sigma$. \smallskip

We summarize in Lemma \ref{n-l12} below the estimate obtained in the
region $\bb R^{(1)}_{T,\kappa}$. The statement requires some notation.
For a continuous function $J: \bb R_+ \to \bb R$ with compact support
in $(0,\infty)$, $\delta>0$, $\kappa>0$, let
\begin{equation}
\label{n-27}
\begin{gathered}
W_{T,\kappa}^{J,\delta} (\eta) \;=\; \frac 1{\log T} \sum_{j=1}^2
\sum_{x\in \bb R^{(1)}_{T,\kappa}} J (\sigma_T(x)) \, \frac
{x^2_j}{|x|^4}  \, \Psi_{\delta, T}(j, x,\eta) \;, \\
\text{where} \quad \Psi_{\delta, T}(j, x,\eta) \;=\;
\big[\eta(x+e_j) - \eta(x)\big]^2
\;-\; 2 \, \sigma \big(m_{\delta,T} (x,\eta) \big)\;,
\end{gathered}
\end{equation}
and $m_{\delta,T} (x,\eta)$ is defined below \eqref{n-19}.  Next lemma
follows from Lemma \ref{n-l7} and \eqref{n-15} by taking $\delta =
\varepsilon$.

\begin{lemma}
\label{n-l12}
Let $J: \bb R_+ \to \bb R$ be a non-negative function of class $C^1$
with compact support in $(0,\infty)$. For every $\varrho>0$,
$0<\kappa<1/2$, 
\begin{equation*}
\limsup_{\delta \to 0} \limsup_{T\to\infty} \frac {\log T}T 
\, \log \bb P_{\alpha} \Big[ \, 
\int_0^1 W_{T, \kappa}^{J,\delta} (\eta_s) \, ds \, \, > \varrho \,
\Big] \; =\; - \infty\;. 
\end{equation*} 
\end{lemma}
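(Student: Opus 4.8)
\emph{Plan.} The statement is the bookkeeping combination of the superexponential estimate of Lemma~\ref{n-l7} with the deterministic comparison \eqref{n-15}, carried out with the choice $\varepsilon=\delta$. Fix $J$, $\kappa$, $\varrho$ as in the statement and pick $0<a<b<\infty$ with $\mathrm{supp}\,J\subset(a,b)$. Since $J\in C^1_K((0,\infty))$ it is Lipschitz-continuous, hence satisfies the assumptions under which \eqref{n-15} was derived; as we shall let $\delta\to0$, we may assume throughout that $\delta<\kappa/2$ and that $\delta$ is small enough for \eqref{n-15} to apply.

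\smallskip\noindent\emph{Comparison step.} The function $W^{J,\kappa,\delta}_T$ appearing in Lemma~\ref{n-l7} (that is, $W^{J,\kappa,\varepsilon}_T$ with $\varepsilon=\delta$) and the function $W^{J,\delta}_{T,\kappa}$ of \eqref{n-27} share the same ``gradient part'' $\frac1{\log T}\sum_j\sum_{x\in\bb R^{(1)}_{T,\kappa}}J(\sigma_T(x))\,\frac{x_j^2}{|x|^4}\,[\eta(x+e_j)-\eta(x)]^2$; they differ only in the subtracted term, which contains $\sigma\big(M^{\sigma_T(x),\Theta(x)}_{T,\delta}\big)$ in the first case and $\sigma\big(m_{\delta,T}(x,\eta)\big)$ in the second. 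On $\bb R^{(1)}_{T,\kappa}$ one has $\sigma_T(x)<1/2-\kappa<1/2$, so $m_{\delta,T}(x,\eta)=\mu^{1,T}(\psi_{\sigma_T(x),\delta})$, which is exactly the quantity written $m_\delta(\sigma_T(x),\eta)$ in \eqref{n-15}. Summing $x_j^2$ over $j$ so that $\sum_j x_j^2/|x|^4=1/|x|^2$, one gets
\begin{equation*}
W^{J,\delta}_{T,\kappa}(\eta)-W^{J,\kappa,\delta}_T(\eta)
=\frac2{\log T}\sum_{j=1}^2\sum_{x\in\bb R^{(1)}_{T,\kappa}}J(\sigma_T(x))\,\frac{x_j^2}{|x|^4}\,\sigma\big(M^{\sigma_T(x),\Theta(x)}_{T,\delta}\big)
-\frac2{\log T}\sum_{x\in\bb R^{(1)}_{T,\kappa}}J(\sigma_T(x))\,\frac1{|x|^2}\,\sigma\big(m_{\delta,T}(x,\eta)\big),
\end{equation*}
and the right-hand side is, by \eqref{n-15} with $\varepsilon=\delta$, at most $C_0\delta+o_T(1)$, uniformly in $\eta$. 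Hence $W^{J,\delta}_{T,\kappa}(\eta)\le W^{J,\kappa,\delta}_T(\eta)+C_0\delta+o_T(1)$ for every configuration $\eta$, and, integrating in time,
\begin{equation*}
\int_0^1 W^{J,\delta}_{T,\kappa}(\eta_s)\,ds\;\le\;\int_0^1 W^{J,\kappa,\delta}_T(\eta_s)\,ds\;+\;C_0\delta\;+\;o_T(1).
\end{equation*}

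\smallskip\noindent\emph{Conclusion.} Given $\varrho>0$, choose $\delta$ so small that $C_0\delta\le\varrho/4$ and then, for each such $\delta$, take $T$ large enough that $o_T(1)\le\varrho/4$; for these $\delta$ and $T$ the event $\{\int_0^1 W^{J,\delta}_{T,\kappa}(\eta_s)\,ds>\varrho\}$ is contained in $\{|\int_0^1 W^{J,\kappa,\delta}_T(\eta_s)\,ds|>\varrho/2\}$. Multiplying the corresponding probabilities by $(\log T)/T$, taking $\limsup_{T\to\infty}$ and then $\limsup_{\delta\to0}$, the right-hand side tends to $-\infty$ by Lemma~\ref{n-l7} (applied with $\varepsilon=\delta$ and $\varrho$ replaced by $\varrho/2$), which is the assertion.

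\smallskip There is no real obstacle here: the probabilistic content sits entirely in Lemma~\ref{n-l7}, and the concavity of $\sigma$ that underlies the mesoscopic-to-macroscopic replacement has already been used in establishing \eqref{n-15}. The only points requiring care are bookkeeping ones — matching the weights ($\sum_j x_j^2/|x|^4=1/|x|^2$), identifying $m_{\delta,T}(x,\eta)$ with $m_\delta(\sigma_T(x),\eta)$ on $\bb R^{(1)}_{T,\kappa}$, checking the hypotheses of \eqref{n-15} for $\delta$ small, and absorbing the deterministic error $C_0\delta+o_T(1)$ into $\varrho$ before invoking the superexponential bound. Note that the one-sidedness of the statement is forced, since \eqref{n-15} is a one-sided inequality, in accordance with Remark~\ref{n-l18}.
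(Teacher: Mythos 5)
Your proof is correct and takes exactly the same route as the paper, which disposes of the lemma in one line by noting that it ``follows from Lemma~\ref{n-l7} and \eqref{n-15} by taking $\delta = \varepsilon$''; you have simply unpacked that sentence into the explicit pointwise comparison $W^{J,\delta}_{T,\kappa}(\eta)\le W^{J,\kappa,\delta}_T(\eta)+C_0\delta+o_T(1)$, the resulting event inclusion, and the invocation of the superexponential bound of Lemma~\ref{n-l7}.
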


\begin{proof}[Proof of Proposition \ref{n-l6}]
Fix a function $J$ and $\varrho>0$ and recall the definition of the
regions $\bb R^{(1)}_{T,\kappa}$, $\bb R^{(2)}_{T,\kappa}$, $\bb
R^{(3)}_{T,\kappa}$, introduced just below the statement of the
proposition. Let $0<\kappa = \kappa(J,\varrho/3)$ for \eqref{n-18} to
hold with $c=\varrho/3$.  Fix this $\kappa$ and decompose the sum over
$x$ in \eqref{n-19} according to these $3$ regions.

By definition of $\kappa$, the sum over the region $\bb
R^{(2)}_{T,\kappa}$ is bounded by $\varrho/3$. Assertion \eqref{n-17}
takes care of the region $\bb R^{(3)}_{T,\kappa}$ and Lemma
\ref{n-l12} of the region $\bb R^{(1)}_{T,\kappa}$.
\end{proof}

\section{Energy estimate}
\label{sec n-2}

We prove in this section a microscopic energy estimate. It follows
from this result that measures with infinite energy have infinite cost
in the large deviations principle. This crucial point in the proof of
the large deviations dates back to \cite{qrv}.

The following elementary observation will repeatedly be used in the
sequel. For any sequence $M_T\to\infty$, and positive sequences $a_T$,
$b_T$, 
\begin{equation}
\label{n-21}
\limsup_{T\to\infty} \frac 1{M_T} \log (a_T + b_T) =
\max\Big \{\limsup_{T\to\infty} \frac 1{M_T} \log a_T \, , \,
\limsup_{T\to\infty} \frac 1{M_T}  \log b_T \Big \}  \;.
\end{equation}

The Dirichlet form of a function also plays a role in this section.
For a local function $f$, denote by $\mc E_T (f) = \mc E_{T,\alpha}
(f)$ the Dirichlet form of $f$:
\begin{equation*}
\mc E_T (f) \;=\; \< \, f \,,\, (-L_T) f \, \>_{\nu_\alpha}\;,
\end{equation*}
where $\< f \,,\, g \>_{\nu_\alpha}$ represents the scalar product in
$L^2(\nu_\alpha)$. An elementary computation provides an explicit
formula for the Dirichlet form:
\begin{equation}
\label{n-12}
\mc E_T (f) \;=\; \frac T4 \, \sum_{j=1}^2  \sum_{x\in\bb Z^2} 
\int \big\{ f(\sigma^{x,x+e_j}\eta)-f(\eta) \big\}^2 \, \nu_\alpha
(d\eta) \; .  
\end{equation}

Lemma \ref{n-l9} below is the main estimate of this section.  For a
continuous function $H: \bb R_+\to\bb R$ with compact support in
$(0,\infty)$, let $V_T(\eta) = V^H_T(\eta)$ be given by
\begin{align*}
V_T(\eta) \; &=\; \sum_{j=1}^2 \sum_{x\in \bb Z^2} 
\frac{x_j}{|x|^2}\, H\Big( \frac{\log |x|}{\log T}\Big) \, \big[
\eta(x+e_j) - \eta(x)\big] \\
& -\; \frac 1{\log T}\, \sum_{j=1}^2 \sum_{x\in \bb Z^2} 
\frac{x^2_j}{|x|^4}\, H\Big( \frac{\log |x|}{\log T}\Big)^2 \, \big[
\eta(x+e_j) - \eta(x)\big]^2\;.
\end{align*}
Most of the time we omit the superscript $H$ of $V^H_T(\eta)$.

\begin{lemma}
\label{n-l9}
Let $H: \bb R_+\to\bb R$ be a continuous function with compact support
in $(0,\infty)$. Then, for all $\ell\ge 1$,
\begin{equation*}
\limsup_{T\to\infty} \frac{\log T}T \log
\bb P_{\alpha} \Big[ \int _0^1 V^H_T(\eta_s) \, ds \ge \ell \Big]
\;\le\; -\, \ell\;.
\end{equation*}
\end{lemma}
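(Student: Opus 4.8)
The plan is to use the standard exponential-martingale (Feynman--Kac) approach to superexponential estimates. For a local function $V_T$, the process
\begin{equation*}
\exp\Big\{ \frac{T}{\log T}\Big( \int_0^t V_T(\eta_s)\, ds \;-\; \int_0^t e^{-U_T(\eta_s)}(\tfrac{\log T}{T} L_T)(e^{U_T})(\eta_s)\, ds \Big)\Big\}
\end{equation*}
is a mean-one martingale for any local perturbation $U_T$; taking $U_T$ to be (essentially) the linear functional built from $H$ that appears as the first sum in $V_T$, we reduce the problem to showing that, after the Feynman--Kac tilt, the rescaled generator term is bounded above by $V_T$ itself, so that the exponential of $(T/\log T)\int_0^1 (V_T - V_T)$ controls everything and Chebyshev gives the bound $-\ell$. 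Concretely, by Chebyshev's inequality the probability in the statement is at most $e^{-(T/\log T)\ell}$ times the expectation of $\exp\{(T/\log T)\int_0^1 V_T(\eta_s)\, ds\}$, and the whole point is that this expectation is bounded by $1$ (or by $e^{o(T/\log T)}$), which is exactly what the martingale property delivers once the generator computation works out.

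The key computation is the following. Write $U_T(\eta) = \sum_j \sum_x \frac{x_j}{|x|^2} H(\sigma_T(x)) \eta(x)$ so that the first sum in $V_T$ is, up to a telescoping/summation-by-parts rewriting, $L_T$-type increments of $U_T$ paired against $H$. Applying $\frac{\log T}{T} L_T$ to $e^{U_T/c_T}$ with the correct normalization $c_T = \log T / T$ (so that the prefactor $T/\log T$ and the $1/\log T$ in $V_T$ conspire correctly) and using the elementary bound $e^a - 1 - a \le \frac12 a^2 e^{|a|}$, together with the fact that the increments $U_T(\sigma^{x,x+e_j}\eta) - U_T(\eta) = \big(\tfrac{x_j}{|x|^2}H(\sigma_T(x)) + O(1/|x|^2)\big)(\eta(x)-\eta(x+e_j))$ are uniformly $O(1/|x|)$ and hence small, one finds that
\begin{equation*}
e^{-U_T} \Big(\tfrac{\log T}{T} L_T\Big) e^{U_T} \;\le\; \frac 1{\log T}\sum_j\sum_x \frac{x_j^2}{|x|^4} H(\sigma_T(x))^2 \big[\eta(x+e_j)-\eta(x)\big]^2 \;+\; \text{(linear term)} \;+\; o_T(1),
\end{equation*}
where the linear term combines with the martingale's linear part to reproduce the first sum of $V_T$, and the quadratic term is exactly the second sum of $V_T$ with the correct sign. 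The error $o_T(1)$ comes from the $e^{|a|}$ factor (harmless since $|a|\to 0$) and from the lower-order $O(1/|x|^2)$ corrections to the increments, which are summable against $\sum_x 1/|x|^2 \sim \log T$ over the support $T^a \le |x| \le T^b$, hence $O(1)$ after the $1/\log T$ normalization and in fact negligible once divided by the exponential rate. Combining the martingale identity with this inequality yields
\begin{equation*}
\bb E_\alpha\Big[\exp\Big\{\frac{T}{\log T}\int_0^1 V_T(\eta_s)\, ds\Big\}\Big] \;\le\; e^{o(T/\log T)},
\end{equation*}
and then Chebyshev finishes the proof.

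The main obstacle I anticipate is the careful bookkeeping of the quadratic term in the generator expansion: one must verify that the Gaussian-type correction produced by $\frac12(U_T(\sigma^{x,x+e_j}\eta)-U_T(\eta))^2$ is precisely $\frac{1}{\log T}\frac{x_j^2}{|x|^4}H(\sigma_T(x))^2[\eta(x+e_j)-\eta(x)]^2$ up to uniformly summable errors, and that no cross terms between different bonds or between the linear and quadratic pieces spoil the sign. This is where the specific form of $V_T$ — the coefficient $x_j^2/|x|^4$ and the square $H(\cdot)^2$ in the subtracted term, versus $x_j/|x|^2$ and $H$ in the added term — has been engineered exactly so that $V_T \le$ (linear martingale part), making the Feynman--Kac bound tight with constant $1$; getting all constants and the $\log T$ powers to line up is the delicate part, but it is essentially a computation once the scaling $c_T = \log T/T$ is fixed. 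A secondary point is controlling the region $|x|$ large (near the right edge $T^b$ of the support of $H$) where $e^{|a|}$ with $a \sim 1/|x|$ is uniformly close to $1$, so this causes no trouble, and the region $|x|$ small (near $T^a$) where again $a\to 0$; thus the perturbation is uniformly bounded and the martingale argument goes through without localization.
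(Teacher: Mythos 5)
Your high-level plan -- Chebyshev, an exponential-moment bound via Feynman--Kac, and then observing that the quadratic piece of $V_T$ is precisely engineered to absorb the ``Gaussian'' correction -- is the correct one and is the same strategy as the paper. But the execution you sketch deviates from the paper in a way that introduces real obstacles, whereas the paper's version avoids them entirely.

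The paper does not introduce any tilt $U_T$. After Chebyshev, it uses the Feynman--Kac variational bound
\begin{equation*}
\limsup_{T\to\infty}\;\sup_{f}\Big\{\int V_T\, f\, d\nu_\alpha \;-\;(\log T)\, D_T(f)\Big\}\;\le\;0\;,
\end{equation*}
where the supremum is over probability densities $f$ and $D_T(f)=\mc E_T(\sqrt f)$. For the linear-in-$H$ part of $V_T$, a change of variables $\eta\mapsto\sigma^{x,x+e_j}\eta$ produces the factor $f(\sigma^{x,x+e_j}\eta)-f(\eta)$, one writes this as $(\sqrt b-\sqrt a)(\sqrt b+\sqrt a)$, and Young's inequality splits it into $(\log T)D_T(f)$ (which cancels the Dirichlet form exactly) plus a term that is literally the quadratic piece of $V_T$ integrated against $f$. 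The two halves of $V_T$ then cancel, and nothing is left over. No tilt, no boundary terms.

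Your version has two concrete problems. First, the object you write down is not a mean-one martingale: the exponential martingale associated to a tilt $W$ necessarily carries the boundary terms $W(\eta_1)-W(\eta_0)$, which you have dropped. Second, and more seriously, if one does add them back with $W=(T/\log T)\,U_T$ and $U_T(\eta)=\sum_{j,x}\tfrac{x_j}{|x|^2}H(\sigma_T(x))\eta(x)$, then $\|U_T\|_\infty$ grows like $T^b$ (with $[a,b]\supset\mathrm{supp}\,H$), so $\|W\|_\infty\sim T^{1+b}/\log T\gg T/\log T$ and the boundary terms are not $o(T/\log T)$. This is precisely the obstruction discussed in Remark \ref{n-l11}: the pieces $\Psi_{\rm stat}$ and $\Psi_{\rm pot}$ of a tilt are not negligible when the perturbation lives outside $(0,1/2)$. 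The whole point of Lemma \ref{n-l9} is that the energy bound holds on all of $\bb R_+$, and the way the paper achieves this is by avoiding any tilt: the Feynman--Kac/Young argument has only a $\Psi_{\rm dyn}$-type term, with the favourable $|x|^{-2}$ decay, and no $\Psi_{\rm stat}$ or $\Psi_{\rm pot}$. A minor additional issue is that $L_T U_T$, for your $U_T$, does not equal the first sum in $V_T$ (the coefficient $\tfrac{x_j}{|x|^2}H(\sigma_T(x))$ is what multiplies the discrete gradient in $V_T$, not what appears after applying $L_T$ to a potential with these coefficients), so the claimed matching of the ``linear terms'' would need justification. If you replace the tilt by the Feynman--Kac variational bound and run Young's inequality on the densities $f$, both difficulties disappear and you recover the paper's proof.
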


\begin{proof}
By Chebyshev's exponential inequality, it is enough to prove that
\begin{equation}
\label{n-10}
\limsup_{T\to\infty} \frac{\log T}T \log
\bb E_{\alpha} \Big[ \exp\Big\{ \frac T{\log T} 
\int _0^1 V_T(\eta_s) \, ds \Big\} \Big] \;\le\; 0\;.
\end{equation}
By Feynman-Kac' formula (cf. \cite[Section A.1.7]{kl}), the left hand
side is bounded by
\begin{equation}
\label{n-13}
\limsup_{T\to\infty} \sup_f \Big\{ 
\int V_T(\eta) \, f(\eta)\, \nu_\alpha(d\eta)  
\,-\, (\log T)\,  D_T(f) \Big\} \;,
\end{equation}
where the supremum is carried over all densities $f$ and $D_T(f)$
represents the Dirichlet form of $\sqrt{f}$ defined in \eqref{n-12}:
$f\ge 0$, $\int f \, d\nu_\alpha = 1$, $D_T(f) = \mc E_T(\sqrt{f})$.

Consider the linear (in $H$) term of $V_T(\eta)$. Performing a change
of variables $\eta' = \sigma^{x,x+e_j}$ we obtain that
\begin{align*}
& \sum_{j=1}^2 \sum_{x\in \bb Z^2} 
\frac{x_j}{|x|^2}\, H\Big( \frac{\log |x|}{\log T}\Big) \, 
\int  \big[ \eta(x+e_j) - \eta(x)\big] \, f(\eta)\, \nu_\alpha(d\eta)
\\
& \; =\; -\, \frac 12\, \sum_{j=1}^2 \sum_{x\in \bb Z^2} 
\frac{x_j}{|x|^2}\, H\Big( \frac{\log |x|}{\log T}\Big) \, 
\int  \big[ \eta(x+e_j) - \eta(x)\big] \, \big[ f(\sigma^{x,x+e_j}\eta) -
f(\eta) \big]\, \nu_\alpha(d\eta)
\end{align*}
Write the difference $f(\sigma^{x,x+e_j}\eta) - f(\eta)$ as $(b-a) =
(\sqrt{b} - \sqrt{a}) (\sqrt{b} + \sqrt{a})$ and apply Young's
inequality to bound the previous expression by
\begin{align*}
& \frac 1{4\log T}\, \sum_{j=1}^2 \sum_{x\in \bb Z^2} 
\frac{x^2_j}{|x|^4}\, H\Big( \frac{\log |x|}{\log T}\Big)^2 \, 
\int  \big[ \eta(x+e_j) - \eta(x)\big]^2 \, \big[ \sqrt{f(\sigma^{x,x+e_j}\eta)} +
\sqrt{f(\eta)} \big]^2 \, \nu_\alpha(d\eta) \\
&\quad +\; \frac{\log T}4 \, \sum_{j=1}^2 \sum_{x\in \bb Z^2} 
\int  \big[ \sqrt{f(\sigma^{x,x+e_j}\eta)} -
\sqrt{f(\eta)} \big]^2 \, \nu_\alpha(d\eta)\;,
\end{align*}
By \eqref{n-12}, the second line is $(\log T)\, D_T(f)$ and cancels
with the second term in \eqref{n-13}. On the other hand, since
$(\sqrt{b} + \sqrt{a})^2 \le 2 (a+b)$, a change of variables $\eta' =
\sigma^{x,x+e_j} \eta$ yields that the first line is equal to
\begin{equation*}
\frac 1{\log T}\, \sum_{j=1}^2 \sum_{x\in \bb Z^2} 
\frac{x^2_j}{|x|^4}\, H\Big( \frac{\log |x|}{\log T}\Big)^2 \, 
\int  \big[ \eta(x+e_j) - \eta(x)\big]^2 \, f(\eta) \,
\nu_\alpha(d\eta) \;, 
\end{equation*}
which is exactly the quadratic (in $H$) term in $V_T(\eta)$. This proves
\eqref{n-10}, and therefore the lemma.
\end{proof}

For the proof of the large deviations principle, we need to restate
Lemma \ref{n-l9} in terms of the polar measure $\bar\mu^T$. For the
piece which is linear in $H$ this is just a summation by parts. For
the one which is quadratic in $H$, it relies on the superexponential
estimates presented in the previous section.

Fix a smooth function $H:\bb R_+ \to \bb R$ with compact support in
$[a,b]$, where $0<a<b$. We claim that 
\begin{equation}
\label{n-08}
-\, 2\pi \int H'(r) , \mu^{1,T}(dr) \;=\; \sum_{j=1}^2 \sum_{x\in \bb Z^2} 
\frac{x_j}{|x|^2}\, H\Big( \frac{\log |x|}{\log T}\Big) \, \big[
\eta(x+e_j) - \eta(x)\big] \; +\; R_T \;,
\end{equation}
where the absolute value of $R_T$ is bounded by $C_0 T^{-a}$ for some
finite constant $C_0$ which depends only on $H$.  This result follows
from a summation by parts on the right-hand side. The derivative of
$H$ provides the term on the left-hand side.  The divergence of
$(x_1/|x|^2, x_2/|x|^2)$ vanishes because $\log |x|$ is harmonic and
$x_j/|x|^2 = \partial_{x_j} \log |x|$.

For a continuous function $H: \bb R_+\to\bb R$ with compact support in
$(0,\infty)$, let $V_{T,\delta} (\eta) = V^H_{T,\delta} (\eta)$ be
given by
\begin{equation*}
V_{T,\delta}(\eta) \; =\; -\, 2\pi \int_{\bb R_+}  H'(r) \,
\mu^{1,T}(\psi_{r, \delta}) \, dr
-\; 4\pi \, \int_{\bb R_+}  H(r)^2 \, 
\sigma \big(m_{\delta,T} (r) \big) \, dr \;,
\end{equation*}
where
\begin{equation*}
m_{\delta,T} (r) \;=\; 
\begin{cases}
\mu^{1,T}\big( \psi_{r, \delta} \big) 
& \text{ if $r < 1/2$,} \\
\alpha & \text{ if $r \ge 1/2$.}
\end{cases}
\end{equation*}

\begin{corollary}
\label{n-l8}
Let $H: \bb R_+\to\bb R$ be a function in $C^1_K((0,\infty))$. Then,
for all $\ell\ge 1$,
\begin{equation*}
\limsup_{\delta \to 0} \limsup_{T\to\infty} \frac{\log T}T \log
\bb P_{\alpha} \Big[ \int _0^1 V^H_{T,\delta} (\eta_s) \, ds \ge
\ell \, \Big] \;\le\; -\, (\ell - 1)\;.
\end{equation*} 
\end{corollary}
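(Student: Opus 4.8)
The plan is to deduce Corollary~\ref{n-l8} by combining Lemma~\ref{n-l9} with the superexponential estimates of Section~\ref{sec n-3}, matching term by term the quantities $V_T^H$ and $V_{T,\delta}^H$. Concretely, I would show that the difference
\begin{equation*}
\int_0^1 V_T^H(\eta_s)\, ds \;-\; \int_0^1 V_{T,\delta}^H(\eta_s)\, ds
\end{equation*}
is, apart from an error of order $\delta$ (plus a negligible $o_T(1)$), bounded above with superexponentially small probability; then the claim follows from Lemma~\ref{n-l9}, the elementary observation \eqref{n-21}, and letting $\delta\to 0$. The shift by $1$ in the right-hand side (the bound $-(\ell-1)$ rather than $-\ell$) is there precisely to absorb the $O(\delta)$ discrepancies and, if needed, a bounded contribution localized near $r=1/2$; one first fixes $\ell$, then takes $\delta$ small enough that all error terms are, say, below $1$ in the relevant event, and invokes Lemma~\ref{n-l9} with $\ell$ (or $\ell-1$) in place of $\ell$.

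The argument splits into the linear-in-$H$ part and the quadratic-in-$H$ part of $V_T^H$. For the linear part, I would use \eqref{n-08} to rewrite the microscopic linear term as $-2\pi\int H'(r)\,\mu^{1,T}(dr)$ up to $R_T$ with $|R_T|\le C_0 T^{-a}$; then Lemma~\ref{n-l14}, applied with $J=H'$, replaces $\int H'(r)\,\mu^{1,T}(dr)$ by $\int H'(r)\,\mu^{1,T}(\psi_{r,\delta})\,dr$ up to an error $C_0\delta$, uniformly in $\eta$ and hence deterministically — no large deviations estimate is even needed here, so this contributes nothing to the exponential cost beyond the $O(\delta)$ that the $-(\ell-1)$ slack swallows. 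For the quadratic part, the microscopic term is
\begin{equation*}
\frac{1}{\log T}\sum_{j=1}^2\sum_{x\in\bb Z^2}\frac{x_j^2}{|x|^4}\,H(\sigma_T(x))^2\,\big[\eta(x+e_j)-\eta(x)\big]^2 \;,
\end{equation*}
and I would apply Proposition~\ref{n-l6} with $J=H^2$ (a nonnegative $C^1$ function with compact support in $(0,\infty)$, as required): this says that, up to superexponentially negligible probability and an $\varrho$ that we send to zero, the cylinder-function average $[\eta(x+e_j)-\eta(x)]^2$ may be replaced by $2\sigma(m_{\delta,T}(x,\eta))$. The resulting sum is
\begin{equation*}
\frac{2}{\log T}\sum_{j=1}^2\sum_{x\in\bb Z^2}\frac{x_j^2}{|x|^4}\,H(\sigma_T(x))^2\,\sigma\big(m_{\delta,T}(x,\eta)\big)\;,
\end{equation*}
and summing in $j$ turns $\sum_j x_j^2$ into $|x|^2$, leaving $\frac{2}{\log T}\sum_x |x|^{-2}H(\sigma_T(x))^2\sigma(m_{\delta,T}(x,\eta))$.

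To close the loop I would replace this last Riemann-type sum by its integral counterpart $4\pi\int_{\bb R_+}H(r)^2\sigma(m_{\delta,T}(r))\,dr$, which is exactly the quadratic term of $V_{T,\delta}^H$ (up to the factor $2\cdot 2\pi$). Here I would invoke Lemma~\ref{n-l22} in the form advertised in Remark~\ref{n-l23}, namely applied to $H(r)=J(r)^2\sigma(\mu^{1,T}(\psi_{r,\delta}))$, using that for fixed $\delta$ this map is bounded and Lipschitz on the relevant compact interval with a constant that may depend on $\delta$; this introduces an error $C_0(\delta)T^{-a}\to 0$ as $T\to\infty$. One small bookkeeping point: $m_{\delta,T}(r)$ is defined piecewise, equal to $\mu^{1,T}(\psi_{r,\delta})$ for $r<1/2$ and $\alpha$ for $r\ge 1/2$, matching $m_{\delta,T}(x,\eta)$ from \eqref{n-19}; the integral comparison on the two pieces is handled by Lemma~\ref{n-l22} on $(0,1/2-\kappa)$ and on $(1/2,\infty)$ separately, while the thin strip $[1/2-\kappa,1/2]$ contributes a bounded term that goes to zero with $\kappa$ (and $\kappa$ is chosen small as part of Proposition~\ref{n-l6}). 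Collecting the linear error $O(\delta)$, the quadratic error $O(\delta)+o_T(1)$, and the vanishing $\varrho$, and applying Lemma~\ref{n-l9} together with \eqref{n-21}, gives
\begin{equation*}
\limsup_{\delta\to 0}\limsup_{T\to\infty}\frac{\log T}{T}\log\bb P_\alpha\Big[\int_0^1 V_{T,\delta}^H(\eta_s)\,ds\ge\ell\Big]\;\le\;-(\ell-1)\;.
\end{equation*}
The main obstacle, as the paper itself flags in Remark~\ref{n-l18}, is that the replacement of $[\eta(x+e_j)-\eta(x)]^2$ by $2\sigma(m_{\delta,T})$ is only a one-sided (mesoscopic) estimate relying on the concavity of $\sigma$; this is why Corollary~\ref{n-l8}, unlike Lemma~\ref{n-l9}, is stated without absolute values and with the weakened bound $-(\ell-1)$. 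Making sure all error terms genuinely land on the correct side of the inequality — so that the one-sided Proposition~\ref{n-l6} suffices — is the delicate part of the bookkeeping.
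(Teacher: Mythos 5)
Your proposal matches the paper's own proof in every essential respect: reduce to showing that $\int_0^1 \{V^H_{T,\delta} - V^H_T\}\,ds$ cannot be large except with superexponentially small probability, handle the linear-in-$H$ difference deterministically via \eqref{n-08} and Lemma~\ref{n-l14}, convert the quadratic difference into $W^{H^2,\delta}_T$ through the Riemann-sum comparison of Remark~\ref{n-l23} and invoke Proposition~\ref{n-l6}, then conclude with Lemma~\ref{n-l9} and \eqref{n-21}. The only blemish is a sign slip in your opening sentence (you wrote $\int V_T^H - \int V_{T,\delta}^H$, but the quantity that must be controlled from above is $\int V_{T,\delta}^H - \int V_T^H$, which is what your subsequent steps — correctly — actually bound, since the one-sided Proposition~\ref{n-l6} gives an upper bound on $W^{H^2,\delta}_T$); otherwise the argument is the one the paper gives.
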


\begin{proof}
By Lemma \ref{n-l9} and \eqref{n-21}, it is enough to show that
\begin{equation}
\label{n-31}
\limsup_{\delta \to 0} \limsup_{T\to\infty} \frac{\log T}T \log
\bb P_{\alpha} \Big[ \int _0^1 \Big\{ V^H_{T,\delta} (\eta_s) 
\,-\, V^H_{T} (\eta_s) \Big\}\, ds \ge 1 \, \Big] \;\le\; -\, \infty\;.
\end{equation}

The sums $V^H_{T,\delta}$, $V^H_{T}$ are expressed as a difference
between a linear term in $H$ and a quadratic term in $H$. We compare
separately the linear and the quadratic terms. By \eqref{n-08} and
Lemma \ref{n-l14}, the absolute value of the difference between the
linear terms is uniformly bounded by $1/2$ for $\delta$ small enough.

We turn to the quadratic terms.  Apply Remark \ref{n-l23} to replace
the integral $\int H(r)^2 \, \sigma \big(m_{\delta,T} (r)
\big) \, dr$ by a Riemannian sum. After this step, the difference of
the quadratic terms is seen to be equal to $W^{H^2,\delta}_T(\eta)$
introduced in \eqref{n-19}. Assertion \eqref{n-31} for the quadratic
piece follows therefore from Proposition \ref{n-l6}.
\end{proof}

The previous result rephrases Lemma \ref{n-l9} in terms of the polar
measure $\mu^{1,T}$.  We go one step further integrating in time to
express the estimate in terms of $\bar \mu^T$.  For a continuous
function $H: \bb R_+\to\bb R$ with compact support in $(0,\infty)$,
let $W^H_{T,\delta}$ be given by
\begin{equation*}
W^H_{T,\delta} \; =\; 2\pi \, \Big\{ -\, \int_{\bb R_+}  H'(r) \,
\bar \mu^{T}(\psi_{r, \delta}) \, dr
-\;2 \, \int_{\bb R_+}  H(r)^2 \, 
\sigma \big(\overline{ m}_{\delta,T} (r) \big) \, dr \Big\}\;,
\end{equation*}
where
\begin{equation*}
\overline{m}_{\delta,T} (r) \;=\; 
\begin{cases}
\bar \mu^{T}\big( \psi_{r, \delta} \big) 
& \text{ if $r < 1/2$,} \\
\alpha & \text{ if $r \ge 1/2$.}
\end{cases}
\end{equation*}
The next result follows from the previous corollary and from the
concavity of $\sigma$.

\begin{corollary}
\label{n-l15}
Let $H: \bb R_+\to\bb R$ be a function in $C^1_K((0,\infty))$. Then,
for all $\ell\ge 1$,
\begin{equation*}
\limsup_{\delta \to 0} \limsup_{T\to\infty} \frac{\log T}T \log
\bb P_{\alpha} \Big[ W^H_{T,\delta}  \ge
\ell \, \Big] \;\le\; -\, (\ell - 1)\;.
\end{equation*} 
\end{corollary}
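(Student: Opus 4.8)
The plan is to deduce Corollary~\ref{n-l15} from Corollary~\ref{n-l8} by exchanging the time integral with the concave nonlinearity $\sigma$. Recall that $\bar\mu^T = \int_0^1 \mu^{1,T}(\eta_s)\,ds$, so that $\bar\mu^T(\psi_{r,\delta}) = \int_0^1 \mu^{1,T}(\eta_s)(\psi_{r,\delta})\,ds$ and likewise $\overline{m}_{\delta,T}(r) = \int_0^1 m_{\delta,T}(r,\eta_s)\,ds$ for $r<1/2$, while both equal $\alpha$ for $r\ge 1/2$. The linear-in-$H$ term of $W^H_{T,\delta}$ is therefore exactly the time integral of the linear term of $V^H_{T,\delta}(\eta_s)$, with no error. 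For the quadratic term, Jensen's inequality applied to the concave map $\sigma$ gives, for each fixed $r$,
\begin{equation*}
\sigma\big(\overline{m}_{\delta,T}(r)\big) \;=\; \sigma\Big( \int_0^1 m_{\delta,T}(r,\eta_s)\,ds \Big) \;\ge\; \int_0^1 \sigma\big(m_{\delta,T}(r,\eta_s)\big)\,ds\;.
\end{equation*}
Multiplying by the nonnegative weight $-4\pi H(r)^2$ reverses the inequality, and integrating in $r$ yields $W^H_{T,\delta} \le \int_0^1 V^H_{T,\delta}(\eta_s)\,ds$ pointwise on path space.

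Consequently the event $\{W^H_{T,\delta} \ge \ell\}$ is contained in the event $\{\int_0^1 V^H_{T,\delta}(\eta_s)\,ds \ge \ell\}$, so that
\begin{equation*}
\bb P_\alpha\big[ W^H_{T,\delta} \ge \ell \big] \;\le\; \bb P_\alpha\Big[ \int_0^1 V^H_{T,\delta}(\eta_s)\,ds \ge \ell \Big]\;,
\end{equation*}
and the conclusion follows immediately from Corollary~\ref{n-l8}, since the right-hand side has $\limsup_{\delta\to 0}\limsup_{T\to\infty} \frac{\log T}{T}\log(\cdot) \le -(\ell-1)$.

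The only point requiring a little care is the justification of the pointwise comparison of $\sigma$-terms, i.e.\ that $\psi_\delta$ is nonnegative so that $r \mapsto m_{\delta,T}(r,\eta_s)$ takes values in $[0,1]$ where $\sigma$ is genuinely concave (this is exactly what the construction of $\psi_\delta$ in Section~\ref{sec n-3} guarantees), and that Fubini's theorem applies to interchange $\int_0^1 ds$ with $\int_{\bb R_+} dr$—both integrands are bounded and supported on a compact set, so this is routine. I do not expect any real obstacle here; the substantive work has already been done in Corollary~\ref{n-l8} and, upstream of it, in Proposition~\ref{n-l6}. The role of this corollary is simply to transfer that estimate from $\mu^{1,T}$ to the time-averaged measure $\bar\mu^T$, exploiting once more the concavity of the mobility, exactly as flagged in the sentence preceding the statement.
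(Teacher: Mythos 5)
Your proof is correct and follows precisely the route the paper indicates: it reads off the identity $\overline{m}_{\delta,T}(r) = \int_0^1 m_{\delta,T}(r,\eta_s)\,ds$ from the definition of $\bar\mu^T$, applies Jensen to the concave mobility $\sigma$ to get $W^H_{T,\delta} \le \int_0^1 V^H_{T,\delta}(\eta_s)\,ds$ pathwise, and invokes Corollary~\ref{n-l8}; this is exactly what the paper means when it says the result ``follows from the previous corollary and from the concavity of $\sigma$.''
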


One recognizes in $W^H_{T,\delta}$ the germ of an energy functional.
For a function $G$ in $C^1_K((0,\infty))$, let $\mc Q_{\alpha, G}: \mc
M_0 \to \bb R$ be given by
\begin{equation}
\label{n-51}
\mc Q_{\alpha, G} (m(r)\, dr) \;=\; -\, \int_0^\infty  G'(r)\, m(r) \, dr \;-\; 
2 \int_{0}^\infty \sigma (m_\alpha(r)) \, G(r)^2 \, dr\;,
\end{equation}
where 
\begin{equation}
\label{n-35}
m_\alpha  (r) \;=\; 
\begin{cases}
m(r) & \text{ if $r < 1/2$,} \\
\alpha & \text{ if $r \ge 1/2$.}
\end{cases}
\end{equation}

With this notation, Corollary \ref{n-l15} can be restated as follows.
Let $H: \bb R_+\to\bb R$ be a function in $C^1_K((0,\infty))$. Then,
for all $\ell\ge 1$,
\begin{equation}
\label{n-32}
\limsup_{\delta \to 0} \limsup_{T\to\infty} \frac{\log T}T \log
\bb P_{\alpha} \Big[ \, 2\pi \, \mc Q_{\alpha, H}(\bar
\mu^{T}_\delta) \,\ge\,
\ell \, \Big] \;\le\; -\, (\ell - 1)\;.
\end{equation} 
where, for a measure $\mu\in\mc M_c$, $\mu_\delta \in \mc M_0$
represents the absolutely continuous measure whose density $m_\delta$
is given by $\mu( \psi_{r,\delta})$: for all functions $G\in
C_K((0,\infty))$,
\begin{equation}
\label{n-25}
\int_0^\infty G(r)\, \mu_\delta (dr) \;=\; 
\int_0^\infty G(r)\, \mu\big( \psi_{r,\delta} \big) \, dr\;.
\end{equation}

\section{Energy and rate function $I_\alpha$}
\label{nsec-1}

We present in this section some properties of the large deviations
rate functional. 

Fix $0<\alpha<1$.  Denote by $\mc Q_\alpha : \mc M_0 \to \bb R_+$ the
energy functional given by
\begin{equation}
\label{n-30}
\mc Q_\alpha (\mu) \;=\; \sup_{G\in C^1_K((0,\infty))}
\mc Q_{\alpha, G} (\mu) \;,
\end{equation}
where $\mc Q_{\alpha, G}$ is defined by \eqref{n-51}. Next result is
Lemma 4.1 in \cite{blm}. 

\begin{lemma}
\label{n-l16}
The functional $\mc Q_\alpha : \mc M_0 \to \bb R_+$ is convex and
lower-semicontinuous.  Moreover, if $\mc Q_\alpha (m(r) dr) < \infty$,
then $m(r)$ has a generalized derivative, denoted by $m'(r)$, and
\begin{equation}
\label{n-34}
\mc Q_\alpha (m(r) dr) \;=\; \frac{1}{8} 
\int_0^{\infty} \frac{m'(r)^2}{\sigma( m_\alpha(r))} \, dr\;. 
\end{equation}
\end{lemma}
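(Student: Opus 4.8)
The plan is to reduce this to the already-cited result, Lemma 4.1 of \cite{blm}, by a straightforward change of variables that eliminates the dependence on $\alpha$ and on the cutoff at $r = 1/2$. For a measure $\mu(dr) = m(r)\,dr \in \mc M_0$, consider the associated density $m_\alpha$ defined by \eqref{n-35}, i.e. $m_\alpha(r) = m(r)$ for $r < 1/2$ and $m_\alpha(r) = \alpha$ for $r \ge 1/2$. The first observation is that $\mc Q_{\alpha,G}(m(r)\,dr)$ depends on $m$ only through $m_\alpha$, except possibly through the boundary contribution of the term $-\int G'(r) m(r)\, dr$ on $(1/2,\infty)$. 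Indeed, for $G \in C^1_K((0,\infty))$ whose support meets $(1/2,\infty)$, write $-\int_{1/2}^\infty G'(r) m(r)\, dr$ and $-\int_{1/2}^\infty G'(r)\, m_\alpha(r)\, dr = -\alpha \int_{1/2}^\infty G'(r)\, dr = \alpha\, G(1/2)$ (using compact support); the difference between these is a continuous linear functional of $m|_{(1/2,\infty)}$. So $\mc Q_\alpha$ is \emph{not} literally a function of $m_\alpha$ alone, but the map $m \mapsto \mc Q_\alpha(m(r)\,dr)$ is obtained from $\mc Q_{\text{standard}}$ applied to $m_\alpha$ by an affine modification. The cleaner route is therefore: observe that $G \mapsto \mc Q_{\alpha,G}(\mu)$ is, for each fixed $\mu$, an affine (indeed, concave quadratic) functional of $G$, so its supremum $\mc Q_\alpha(\mu)$ is automatically convex and lower semicontinuous in $\mu$ as a supremum of continuous (in the vague topology, since $\mc M_0$ carries densities bounded by $1$ and the integrals against $G', G^2$ are vaguely continuous) affine functionals.

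More precisely, the first step is convexity and lower semicontinuity. For each fixed $G \in C^1_K((0,\infty))$, the map $\mu = m(r)\,dr \mapsto \mc Q_{\alpha,G}(\mu)$ given by \eqref{n-51} is an affine function of $m$ (the term $\int \sigma(m_\alpha) G^2$ is the only nonlinear one, but since we are taking the supremum over $G$ and $-G$ simultaneously$\ldots$ actually $\sigma(m_\alpha)$ is concave in $m_\alpha$, hence in $m$, so $\mc Q_{\alpha,G}$ is convex in $m$ when the quadratic coefficient $G^2 \ge 0$ — good, the sign works). Thus $\mc Q_\alpha = \sup_G \mc Q_{\alpha,G}$ is a supremum of convex, vaguely lower semicontinuous functionals (vague continuity holds because on $\mc M_0$ the densities are uniformly bounded and $G', \sigma(m_\alpha)G^2$ test against them continuously, e.g. by dominated convergence), hence is itself convex and lower semicontinuous. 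This part does not even need \cite{blm}.

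The second step is the identification \eqref{n-34} when $\mc Q_\alpha(\mu) < \infty$. Here I would invoke Lemma 4.1 of \cite{blm} directly. That result handles exactly the functional $m \mapsto \sup_G \{ -\int G' m - \int \sigma(m) G^2 \}$ over $\bb R_+$ and identifies it, when finite, with $\tfrac14 \int (m')^2/\sigma(m)$. To import it, split: on $(0,1/2)$ the functional $\mc Q_{\alpha,G}$ agrees with the \cite{blm} functional with density $m$, and on $(1/2,\infty)$ the density $m_\alpha \equiv \alpha$ is constant so its generalized derivative is $0$ and its \cite{blm}-energy vanishes. One must check that finiteness of $\mc Q_\alpha(\mu)$ forces $m$ to have a generalized derivative on all of $(0,\infty)$ — on $(1/2,\infty)$ this is trivial since $m_\alpha$ is constant, and the two pieces glue because a function with an $L^2$-type generalized derivative on $(0,1/2)$ and on $(1/2,\infty)$, when the two one-sided traces at $1/2$ agree in the weak sense built into the variational formula, has a generalized derivative across $1/2$. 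The factor $1/8$ rather than $1/4$ comes from the coefficient $2$ in front of $\int \sigma(m_\alpha) G^2$ in \eqref{n-51}: rescaling $G \to G/2$ (equivalently, tracking the Legendre transform of $aG^2$ versus $G^2$) turns the $\tfrac14$ of the normalized \cite{blm} formula into $\tfrac18$.

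The main obstacle I anticipate is the gluing across $r = 1/2$: one has to be careful that the supremum over $G \in C^1_K((0,\infty))$ (test functions that need \emph{not} vanish at $1/2$) genuinely produces a generalized derivative of $m_\alpha$ on the whole half-line with no distributional singularity (no Dirac mass) at $1/2$, and that the energy integral in \eqref{n-34} does not pick up a jump contribution. This is exactly the point whose analogue was mishandled in \cite{cll} (see Remark \ref{last}); the resolution is that \emph{if} $\mc Q_\alpha(\mu)$ is finite then necessarily the trace of $m$ at $1/2^-$ equals $\alpha$ (otherwise the jump would make the energy infinite, as in the example $\mu_\beta$), so the glued function is genuinely in the right Sobolev-type class and \eqref{n-34} holds with the stated constant. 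Everything else is the bookkeeping already carried out in \cite{blm}.
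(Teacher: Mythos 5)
Your overall plan---reduce to Lemma~4.1 of \cite{blm} by the rescaling $G\to G/2$, which converts the $\tfrac14$ into a $\tfrac18$---is exactly what the paper does; the paper's ``proof'' is nothing more than the citation, so you have essentially reproduced it and fleshed out the computation.

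However, the ``main obstacle'' you identify and your proposed resolution of it are both wrong, and the error is worth flagging because it signals a confusion between $m$ and $m_\alpha$. You claim that finiteness of $\mc Q_\alpha(m(r)\,dr)$ forces the left trace $m(1/2^-)$ to equal $\alpha$, ``otherwise the jump would make the energy infinite.'' This is false: take $m\equiv c$ constant with $c\neq\alpha$. Then $-\int G'm=0$ for every $G\in C^1_K((0,\infty))$, so $\mc Q_{\alpha,G}(m\,dr)=-2\int\sigma(m_\alpha)G^2\le 0$ and $\mc Q_\alpha(m\,dr)=0<\infty$, while $m(1/2^-)=c\neq\alpha$. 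The function $m_\alpha$ does jump at $1/2$ in this example, but that is irrelevant: the lemma asserts a generalized derivative for $m$, not for $m_\alpha$, and the formula~\eqref{n-34} involves $m'$, not $m_\alpha'$. The auxiliary function $m_\alpha$ enters the variational formula \emph{only} through the bounded weight $\sigma(m_\alpha)\in[0,1/4]$ in the quadratic term; since $\sigma(m_\alpha)\le 1/4$ uniformly, a tall thin test bump at any point $r_0$ where $m$ itself jumps would drive $\mc Q_{\alpha,G}$ to $+\infty$ (as in your $\mu_\beta$ computation), so finiteness of $\mc Q_\alpha$ forces $m$ to be free of jumps and to possess a locally square-integrable generalized derivative---with no separate ``gluing across $1/2$'' step required. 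Your remark about the example $\mu_\beta$ is also slightly misattributed: there $m_\beta$ coincides with $m_{\beta,\alpha}$, and it is the jump of $m_\beta$ itself (not a mismatch between $m_\beta(1/2^-)$ and $\alpha$) that produces the infinite energy.

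A smaller imprecision: the maps $\mu\mapsto\mc Q_{\alpha,G}(\mu)$ are not vaguely continuous on $\mc M_0$ as you assert---the quadratic term $\int\sigma(m_\alpha)G^2$ is only upper semicontinuous under vague convergence (weak lower semicontinuity of $\int m^2G^2$), so each $\mc Q_{\alpha,G}$ is convex and lower semicontinuous but not continuous. This suffices for the supremum to be convex and lower semicontinuous, so the conclusion stands, but the justification as written is off.
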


Let $C^2(\bb R_+, \alpha)$ be the space of twice continuously
differentiable functions $\gamma\,:\,[0, \infty)\,\to\,(0,1)$ such
that $\gamma'$ has a compact support in $(0, 1/2)$ and such that
$\gamma(r) = \alpha$ for $r$ sufficiently large.  There exists
therefore $0<\beta<1$ and $0<\varepsilon<1/4$ such that
$\gamma(r)=\beta$ for $r \le \varepsilon$, and $\gamma(r)=\alpha$ for
$r\ge 1/2-\varepsilon$, $\varepsilon \le \gamma(s) \le 1- \varepsilon$
for all $s\ge 0$.  For each $\gamma$ in $C^2(\bb R_+, \alpha)$, let
$\Gamma=\Gamma_{\gamma, \alpha}: \bb R_+ \to \bb R$ be given by
\begin{equation}
\label{n-02}
\Gamma (u) \;=\; \frac 12 \Big\{ \log \frac {\gamma (u)}
{1-\gamma (u)} \,-\, 
\log \frac {\alpha}{1-\alpha} \Big\} \; \cdot
\end{equation}
Note that the space $\{ \Gamma_{\gamma, \alpha}' ,\, \gamma \in
C^2(\bb R_+, \alpha)\}$ corresponds to the space $C^1_K((0,1/2))$.

Fix $\gamma \in C^2(\bb R_+, \alpha)$, and let $J_{\gamma} : \mc M_0
\to \bb R$ be the rate-functional given by
\begin{equation}
\label{n-33}
J_{\gamma} (m(r)\, dr) \; =\;
-\, \pi \int_0^{\infty} \Gamma''(r) \, m(r)\, dr 
\; - \; \pi \int_0^{\infty} \sigma(m(r))\, \Gamma'(r)^2 
\, dr \;.
\end{equation}

Recall from \eqref{n-52} that we denote by $\mc M_{0,\alpha}$ the
space of absolutely continous measures whose density is equal to
$\alpha$ on $(1/2, \infty)$. Denote by $\mc M^{\mc Q}_{0,\alpha}$ the
set of measures in $\mc M_{0,\alpha}$ with finite energy:
\begin{equation}
\label{n-63}
\mc M^{\mc Q}_{0,\alpha} \;=\; \big\{\mu(dr) = m(r)\, dr \in \mc M_{0,\alpha} : 
\mc Q_\alpha(\mu) < \infty \big\}\;.
\end{equation}
Let $J^{\mc Q}_{\gamma} : \mc M_c\to \bb R \cup \{+\infty\}$ the
functional given by
\begin{equation}
\label{n-53}
J^{\mc Q}_{\gamma} (\mu) \;=\; 
\left\{
\begin{array}{ll}
J_{\gamma} (\mu) & \text{if $\mu \in \mc
  M^{\mc Q}_{0,\alpha}\,$,} \\
+\infty & \text{otherwise,}
\end{array}
\right.
\end{equation}
Note that $J^{\mc Q}_{\gamma}$ is defined on $\mc M_c$, while
$J_{\gamma}$ is only defined on $\mc M_0$.

Let $J^{\mc Q}: \mc M_c \to \bb R_+$ be given by
\begin{equation}
\label{n-54}
J^{\mc Q} (\mu) \;=\; \sup_{\gamma \in C^2(\bb R_+, \alpha)}  
J^{\mc Q}_{\gamma} (\mu) \;.
\end{equation}
Since the set $\{\Gamma_{\gamma,\alpha}' : \gamma \in C^2(\bb R_+,
\alpha)\}$ coincides with the set $C^1_K((0,1/2))$, on the set $\mc
M^{\mc Q}_{0,\alpha}$ the functional $J^{\mc Q}$ can be rewritten as
\begin{equation*}
J^{\mc Q} (m(r) dr) \;=\; \pi \, \sup_{H \in C^1_K((0,1/2))}  \Big\{
-\, \int_0^{\infty} H'(r) \, m(r)\, dr 
\; - \; \int_0^{\infty} \sigma(m(r))\, H(r)^2 
\, dr \Big\} \;.
\end{equation*}
The proof of Lemma \ref{n-l16} yields that if $J^{\mc Q} (m(r)
dr)<\infty$, then $m$ has a generalized derivative in $(0,1/2)$,
denoted by $m'$, and
\begin{equation}
\label{n-36}
J^{\mc Q} (m(r) dr) \;=\; \frac{\pi}{4}\, \int_0^{1/2} 
\frac{[m'(r)]^2}{\sigma(m(r))} \, dr\;.
\end{equation}

The next results asserts that in the definition of the rate function
$J^{\mc Q}$, we can replace the set $C^1_K((0,1/2))$ by the larger one
$C^1_K((0,\infty))$.

\begin{lemma}
\label{n-l17}
For $\mu \in \mc M^{\mc Q}_{0,\alpha}$, 
\begin{equation}
\label{n-55}
J^{\mc Q} (m(r) dr) \;=\; \pi\, \sup_{H \in C^1_K((0,\infty))}  \Big\{
-\, \int_0^{\infty} H'(r) \, m(r)\, dr 
\; - \; \int_0^{\infty} \sigma(m(r))\, H(r)^2 
\, dr \Big\} \;.
\end{equation}
In particular, $J^{\mc Q} = I_{\mc Q,\alpha}$ on $\mc M_c$.
\end{lemma}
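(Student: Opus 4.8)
The plan is to prove the equality \eqref{n-55} by establishing two inequalities. The inequality ``$\le$'' is immediate, since the right-hand side of \eqref{n-55} is a supremum over the larger class $C^1_K((0,\infty)) \supset C^1_K((0,1/2))$, hence it dominates $J^{\mc Q}(m(r)\,dr)$ as written before Lemma \ref{n-l17} (the formula displayed just above the statement of Lemma \ref{n-l17}). The content is therefore the reverse inequality: for $\mu = m(r)\,dr \in \mc M^{\mc Q}_{0,\alpha}$ and an arbitrary test function $H \in C^1_K((0,\infty))$, we must show
\begin{equation*}
-\int_0^\infty H'(r)\, m(r)\, dr \;-\; \int_0^\infty \sigma(m(r))\, H(r)^2\, dr \;\le\; \frac1\pi\, J^{\mc Q}(m(r)\,dr)\;.
\end{equation*}

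The key idea is to exploit that $m(r) = \alpha$ for $r \ge 1/2$, so that on the interval $(1/2,\infty)$ the density is constant and contributes nothing to the energy. First I would split both integrals at $r = 1/2$. The piece on $(0,1/2)$ is controlled, after a suitable truncation/approximation of $H$ by functions supported in $(0,1/2)$, by $\frac1\pi J^{\mc Q}(m(r)\,dr)$ via \eqref{n-36}. The piece on $(1/2,\infty)$ is where the argument must be made carefully: there $-\int_{1/2}^\infty H'(r)\,\alpha\, dr = \alpha H(1/2)$ by the fundamental theorem of calculus (since $H$ has compact support), while $-\int_{1/2}^\infty \sigma(\alpha) H(r)^2\, dr \le 0$. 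So the contribution from $(1/2,\infty)$ to the right-hand side of \eqref{n-55} before the truncation is at most $\alpha H(1/2) - \sigma(\alpha)\int_{1/2}^\infty H(r)^2\,dr$. The plan is to show this term can be absorbed: one modifies $H$ near $1/2$ by subtracting a function that kills the boundary value $H(1/2)$ while changing the $(0,1/2)$-integrals only by an arbitrarily small amount, using that $m$ is absolutely continuous and $\sigma(m)$ is integrable near $1/2$. Concretely, for small $\epsilon > 0$ let $\chi_\epsilon$ be a smooth cutoff equal to $1$ on $[1/2-\epsilon,\infty)$ and $0$ on $(0,1/2-2\epsilon)$, and replace $H$ by $\widetilde H = H(1 - \chi_\epsilon) + (\text{correction})$; then $\widetilde H \in C^1_K((0,1/2))$ after further truncation on the right, and one checks that the difference between the functionals evaluated at $H$ and at $\widetilde H$ tends to $0$ as $\epsilon \to 0$, using $\int_{1/2-2\epsilon}^{1/2}\sigma(m(r))\,dr \to 0$ and $\int_{1/2-2\epsilon}^{1/2}|m'(r)|\,dr \to 0$ (the latter from finiteness of the energy and Cauchy–Schwarz).

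I expect the main obstacle to be handling the sign and the boundary term $\alpha H(1/2)$ correctly — one must verify that the modification genuinely does not create a larger value of the functional, i.e. that the correction term's own contribution to $-\int H'm - \int \sigma(m)H^2$ is non-positive or negligible. A clean way around this is to observe that the supremum in \eqref{n-55} can be restricted to $H$ with $H(1/2) \le 0$: indeed, replacing $H$ by $H \wedge 0$ near $1/2$ (or integrating by parts to see that the linear functional $-\int H'm$ only ``sees'' $m$ through $H'$, which we may arrange to vanish on $[1/2,\infty)$) does not decrease the functional, because $\sigma(m)H^2 \ge 0$ and on $(1/2,\infty)$ we have $m = \alpha$ constant, so $-\int_{1/2}^\infty H'\alpha = \alpha H(1/2)$, which is non-positive precisely when $H(1/2)\le 0$. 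Once $H$ is reduced to a function whose derivative is supported in $(0,1/2)$, \eqref{n-36} applies directly. The final sentence ``In particular, $J^{\mc Q} = I_{\mc Q,\alpha}$ on $\mc M_c$'' then follows by combining \eqref{n-55} with the definition \eqref{n-39} of $\mc Q$ and \eqref{n-52} of $I_{\mc Q,\alpha}$: on $\mc M^{\mc Q}_{0,\alpha}$ both equal $\pi\,\mc Q(\mu)$ by \eqref{n-55}, and off $\mc M_{0,\alpha}$ both are $+\infty$ by construction, while on $\mc M_{0,\alpha}\setminus \mc M^{\mc Q}_{0,\alpha}$ one has $\mc Q(\mu) = \infty$ so $I_{\mc Q,\alpha}(\mu) = \infty$, and $J^{\mc Q}(\mu) = \infty$ as well since the supremum in \eqref{n-54} is unbounded there (the energy over $(0,1/2)$ already being infinite).
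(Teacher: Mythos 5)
Your overall plan—prove $\le$ trivially, then establish the reverse inequality $Q(\mu)\le J^{\mc Q}(\mu)$—matches the paper's, but your route for the reverse inequality is genuinely different from the paper's and, as sketched, has a real gap.

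The paper does not manipulate test functions near $r=1/2$ at all. Instead it observes that on $\mc M_{0,\alpha}$ one has $\sigma(m)=\sigma(m_\alpha)$, so that replacing $G$ by $G/2$ in \eqref{n-51} gives $-\int G'm-\int\sigma(m)G^2 = 2\,\mc Q_{\alpha,G/2}(\mu)\le 2\,\mc Q_\alpha(\mu)$, whence $Q(\mu)<\infty$. Then the representation formula of \cite[Lemma 4.1]{blm} (the same result stated below \eqref{n-39} for $\mc Q$) gives $Q(\mu)=\tfrac\pi4\int_0^\infty \frac{[m']^2}{\sigma(m)}\,dr$, and since $m\equiv\alpha$ on $[1/2,\infty)$ forces $m'=0$ a.e.\ there, the integral reduces to $[0,1/2]$, which is $J^{\mc Q}(\mu)$ by \eqref{n-36}. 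No truncation of the test function is needed.

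Your proposal, by contrast, tries to take an arbitrary $H\in C^1_K((0,\infty))$ and replace it by something in $C^1_K((0,1/2))$ with almost the same value of the functional. The boundary term $\alpha H(1/2)$ is exactly what makes this hard, and your two suggested fixes do not close the gap. The ``correction'' you subtract near $1/2$ is left unspecified, and you would still have to verify that its own contribution to $-\int H'm-\int\sigma(m)H^2$ is negligible, which is not obvious because the quadratic term couples the correction with $H$. The ``clean way around'' is also not clean: you assert that the supremum may be restricted to $H$ with $H(1/2)\le 0$, but you give no argument for why the supremum is unchanged by this restriction, and even if it were, $H(1/2)\le 0$ does not put $H$ into $C^1_K((0,1/2))$ --- for that you need $H$ to vanish identically on a neighbourhood of $[1/2,\infty)$, not merely to have a nonpositive value at $1/2$. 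As written, the step ``Once $H$ is reduced to a function whose derivative is supported in $(0,1/2)$, \eqref{n-36} applies directly'' presupposes precisely the reduction that has not been carried out. If you want an elementary argument that avoids redefining $H$, the efficient route is the one the paper uses (or, equivalently, integrate by parts to write $-\int H'm=\int Hm'$, apply $Hm'\le\sigma(m)H^2+\tfrac{[m']^2}{4\sigma(m)}$, and use that $m'=0$ on $(1/2,\infty)$); but that again goes through the a priori finiteness of the energy and the representation formula, which your sketch tries to sidestep.

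Your treatment of the final assertion ($J^{\mc Q}=I_{\mc Q,\alpha}$ on $\mc M_c$) is essentially correct and matches the paper's, modulo the implicit use, on $\mc M_{0,\alpha}\setminus\mc M^{\mc Q}_{0,\alpha}$, of the fact that $\mc Q_\alpha(\mu)=\infty$ forces $\mc Q(\mu)=\infty$ (which again follows from $\sigma(m)=\sigma(m_\alpha)$ on $\mc M_{0,\alpha}$ and the scaling of the test function); the phrase ``the energy over $(0,1/2)$ already being infinite'' is a correct heuristic but is stated without the justification that $m$ either lacks a generalized derivative or has $m'=0$ a.e.\ on $(1/2,\infty)$.
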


\begin{proof}
Denote by $Q (m(r) dr)$ the right hand side of \eqref{n-55}. It is
clear that $J^{\mc Q} (\mu) \le Q(\mu)$ for all $\mu\in \mc M_0$. We
prove the reverse inequality for measures in $\mc M^{\mc
  Q}_{0,\alpha}$.

Fix $\mu(dr) = m(r) dr \in \mc M^{\mc Q}_{0,\alpha}$. We claim that $Q
(\mu) <\infty$. Indeed, recall the definition of $\mc Q_\alpha$
introduced in \eqref{n-30}. In formula \eqref{n-51}, take $G/2$ in
place of $G$, to obtain that
\begin{equation*}
-\, \int_0^{\infty} G'(r) \, m(r)\, dr 
\; - \; \int_0^{\infty} \sigma(m_\alpha(r))\, G(r)^2 
\, dr  \;\le\; 2\, \mc Q_{\alpha, G/2} (\mu) 
\;\le\; 2\, \mc Q_\alpha  (\mu)
\end{equation*}
for all $G \in C^1_K((0,\infty))$.  Since $m(r)=\alpha$ for $r\ge
1/2$, in the variational formula which defines $Q$, we may replace
$\sigma(m(r))$ by $\sigma(m_\alpha(r))$. After this replacement,
optimizing over $G$ yields that $Q (\mu) \le 2\, \mc Q_\alpha
(\mu)$. As $\mu$ belongs to $\mc M^{\mc Q}_{0,\alpha}$, $\mc Q_\alpha
(\mu)<\infty$ so that $Q (\mu)<\infty$, as claimed.

By Lemma \ref{n-l16}, since $Q(\mu) < \infty$, $m$ has a generalized
derivative, denoted by $m'$, and
\begin{equation*}
Q (\mu) \;=\; \;=\; \frac{\pi}{4}\, \int_0^{\infty} 
\frac{[m'(r)]^2}{\sigma(m(r))} \, dr\;.
\end{equation*}
Since $m(r)=\alpha$ for $r\ge 1/2$, $m'(r)=0$ a.s. on $[1/2, \infty)$,
and the range of the previous integral can be reduced to $[0,1/2]$,
which proves that $Q (\mu) = J^{\mc Q} (\mu)$ in view of \eqref{n-36}.

To prove the second assertion of the lemma, observe first that both
functionals coincide on the set $\mc M^{\mc Q}_{0,\alpha}$. Indeed,
the right hand side of \eqref{n-55} is just $\pi \mc Q$, where $\mc Q$
has been introduced in \eqref{n-39}, and $I_{\mc Q,\alpha}$ is equal
to $\pi \mc Q$ on $\mc M^{\mc Q}_{0,\alpha}$. It remains to show that
$I_{\mc Q,\alpha} = J^{\mc Q} = \infty$ on $[\mc M^{\mc
  Q}_{0,\alpha}]^c$. For $J^{\mc Q}$ this follows by definition. For
$I_{\mc Q,\alpha}$ the identity holds on $[\mc M_{0,\alpha}]^c$ by
definition. On the set $\mc M_{0,\alpha} \setminus \mc M^{\mc
  Q}_{0,\alpha}$, $I_{\mc Q,\alpha}(\mu) = \pi \mc Q(\mu) = \infty$. 
\end{proof}

\section{The upper bound}
\label{sec3}

The proof of the upper bound is similar to the one presented in
\cite{cll}, but relies on the energy estimate proved in the previous
section to restrict the set of measures to the ones with finite energy
on $\bb R_+$.

We follow \cite{blm} with a minor improvement. Instead of considering
$\bar\mu^T$ as a density function on $\bb R_+$ we defined here
$\bar\mu^T$ as a measure with mass points. This is more natural, but
creates an extra minor difficulty, as we have to show that at the level
of the large deviations, we may exclude measures which are not
absolutely continuous.

The proof of the large deviations principle is based on the following
perturbations of the dynamics. Fix $\gamma\in C^2(\bb R_+,\alpha)$ and
recall from \eqref{n-02} the definition of the function $\Gamma: \bb
R_+ \to \bb R$ introduced in. Let $\Gamma_T(x)$, $T>0$, $x\in \bb
Z^2$, be given by
\begin{equation*}
\Gamma_T(x) \;=\; \Gamma (\sigma_T(x))\;.
\end{equation*}
Denote by $L_{T,\gamma}$ the generator of the inhomogeneous exclusion
process in which a particle jumps from $x$ to $y$ at rate
$\exp\{\Gamma_T (y)- \Gamma_T (x)\}$:
\begin{equation*}
(L_{T,\gamma} f)(\eta) \;=\; \frac T2 \sum_{x\in\bb Z^2} 
\sum_{y:|x-y|=1} \eta(x) \{1-\eta (y)\}\,
e^{\Gamma_T (y)- \Gamma_T (x)} \,
[f(\sigma^{x,y} \eta) - f(\eta)]\; .  
\end{equation*}
Denote by $\nu_{T, \gamma}$ the product measure on $\{0,1\}^{\bb
  Z^2}$, with marginals given by
\begin{equation}
\label{n-56}
\nu_{T, \gamma} \{\eta, \, \eta(x)=1\} \; =\;
\gamma(\sigma_T(x)) \; .  
\end{equation}
The measure $\nu_{T, \gamma}$ coincides with $\nu_\alpha$ outside a
ball of radius $T^{1/2 - \varepsilon}$ centered at the origin, for
some $\varepsilon >0$.  Moreover, a simple computation shows that
$\nu_{T,\gamma}$ is an invariant reversible measure for the Markov
process with generator $L_{T,\gamma}$.  Denote by $\bb P_{T, \gamma}$
the probability measure on $D(\bb R_+, \{0,1\}^{\bb Z^2})$ induced by
Markov process whose generator is $L_{T,\gamma}$ and which starts from
$\nu_{T,\gamma}$.

This section is organized as follows. We first define four subsets of
measures whose complements have superexponentially small
probabilities. Then, we show that on these sets a family of
martingales can be expressed in terms of the polar measure
$\bar\mu^T$. These explicit formulae and a min-max argument due to
Varadhan permit to conclude the proof of the upper bound.

\smallskip\noindent{\bf A. Polar measure at $[1/2,\infty)$.}  Let
$\{G_m : m\ge 1\}$ be a sequence of functions in $C_K((1/2,\infty))$
which is dense with respect to the supremum norm. For $\kappa >0$ and
$n\ge 1$, denote by $A_{n,\kappa}$ the closed subspace of $\mc M_c$
defined by
\begin{equation}
\label{n-38}
A_{n,\kappa} \; =\; \Big\{ \mu\in \mc M_c :\,
\Big | \int G_m(r)  \, \mu(dr) \, -\,  \alpha \int G_m(r) \, dr \, \Big| 
\,\le\, \kappa \text{ for $1\le m\le n$ } \Big\}\; .  
\end{equation}
By Lemma \ref{n-l1} and \eqref{n-21}, for every $\kappa>0$ and $n\ge
1$,
\begin{equation}
\label{n-28}
\limsup_{T\to\infty} \frac{\log T} T \log \bb P_{\alpha} \big[ \,
\bar \mu^T \not \in A_{n,\kappa} \, \big] \; =\; -\, \infty\;.
\end{equation}

\smallskip\noindent{\bf B. Energy functionals.}  Recall the definition
of the functionals $\mc Q_{\alpha, H}$ defined by \eqref{n-51}. Fix a
sequence $\{H_p : p\ge 1\}$ of smooth functions, $H_p\in C^2_K(\bb
R_+)$, dense in $C^1_K(\bb R_+)$.  For $q\ge 1$, $\ell>0$, let
$B_{q,\ell}$ be the set of paths with truncated energy bounded by
$\ell$:
\begin{equation}
\label{n-26}
B_{q,\ell} \;=\; \big\{ \mu\in\mc M_0 : 2\pi\, 
\max_{1\le p\le q} \mc Q_{\alpha, H_p}(\mu) \le \ell\big\} \;.
\end{equation}
By \eqref{n-21} and \eqref{n-32}, for any $q\ge 1$ and $\ell >0$
\begin{equation}
\label{n-22}
\limsup_{\delta \to 0} \limsup_{T\to\infty} \frac {\log T}T  \log
\bb P_{\alpha} \big [ \,
\bar \mu^T_\delta \not\in B_{q,\ell} \, \big] \; \le \; -(\ell -1) \;.
\end{equation}

\smallskip\noindent{\bf C. Absolutely continuous measures.}  Let
$\{F_i : i\ge 1\}$ be a sequence of nonnegative functions in
$C_K((0,1/2))$ which is dense with respect to the supremum norm in the
space of nonnegative functions in $C_K((0,1/2))$. For $\kappa >0$ and
$m\ge 1$, denote by $C_{m,\kappa}$ the closed subspace of $\mc M_c$
defined by
\begin{equation}
\label{n-43}
C_{m,\kappa} \; =\; \Big\{ \mu\in \mc M_c :
\int F_i(r)  \, \mu(dr) \,\le\, 
\int F_i (r) \, dr \,+\, \kappa \text{ for $1\le i\le m$ } \Big\}\; .  
\end{equation}
By \eqref{n-41}, for every $\kappa>0$ and $m\ge 1$, 
\begin{equation}
\label{n-42}
\limsup_{T\to\infty} \frac{\log T} T \log \bb P_{\alpha} \big[ \,
\bar \mu^T \not \in C_{m,\kappa} \, \big] \; =\; -\, \infty\;.
\end{equation}

\smallskip\noindent{\bf D. Ergodic bounds.}  Fix $\gamma \in C^2(\bb R_+,
\alpha)$, $\delta >0$ and $\varrho >0$. Recall from \eqref{n-27} the
definition of the local function $\Psi_{\delta, T}(j, x,\eta)$.  Let
$B^{\delta, \varrho}_{T, \gamma}$ be the set defined by
\begin{equation}
\label{n-04}
B^{\delta, \varrho}_{T, \gamma} \; =\; \Big \{\eta:
\int_0^1 W^{\delta}_{T,\gamma} (\eta_s) \, ds\, \le \varrho
\Big \} \; ,
\end{equation}
where
\begin{gather*}
W^{\delta}_{T,\gamma} (\eta) \; =\; \frac 1{2 \log T}
\sum_{x\in \bb Z^2_*}  \sum_{j=1}^2 \frac {x_j^2}{|x|^4}
\, \Gamma'(\sigma_T(x))^2\, \Psi_{\delta, T}(j, x,\eta) \;.
\end{gather*}
As the support of $\Gamma'$ is contained in $(0,1/2)$, by Lemma
\ref{n-l12}, for every $\gamma\in C^2(\bb R_+, \alpha)$, $\varrho>0$
\begin{equation}
\label{n-l13}
\limsup_{\delta \to 0} \limsup_{T\to\infty} \frac {\log T}T 
\, \log \bb P_{\alpha} \big[ \, 
\big( B^{\delta, \varrho}_{T, \gamma} \big)^c \, \big] \; =\; - \infty\;.   
\end{equation}

\smallskip\noindent{\bf E. Radon-Nikodym derivatives.}  Fix $\gamma
\in C^2(\bb R_+, \alpha)$. Recall from the paragraph below
\eqref{n-56} the definition of the measure $\bb P_{T, \gamma}$, and
denote by $d\bb P_\alpha/d \bb P_{T, \gamma}$ the Radon-Nikodym
derivative of the measure $\bb P_\alpha$ with respect to the measure
$\bb P_{T, \gamma}$ restricted to the $\sigma$-algebra generated by
$\eta_s$, $0\le s\le 1$.

The Radon-Nikodym derivative $d \bb P_{T, \gamma} /d \bb P_\alpha$ can
be written as the product of three exponentials:
\begin{equation}
\label{n-58}
\frac{d \bb P_{T, \gamma}}{d \bb P_\alpha} \;=\; \Psi_{\rm stat} \,
\Psi_{\rm pot} \, \Psi_{\rm dyn} \;.
\end{equation}
The first exponential corresponds to the Radon-Nikodym derivative of
the initial states: $d\nu_{T, \gamma}/d\nu_\alpha$:
\begin{equation*}
\Psi_{\rm stat} \;=\; \exp \sum_{x\in \bb Z^2_*}
\Big\{ \eta_0 (x) \log \Big ( \frac{\gamma (\sigma_T(x))}
\alpha \Big ) \; +\;  [1-\eta_0(x)]  \log
\Big ( \frac{1-\gamma(\sigma_T(x))}  {1-\alpha}\Big) \Big\}  \;.
\end{equation*}
The second one is associated to the potential $V(\eta)= \sum_{x\in \bb
  Z^2_*} \Gamma_T(x) \, \eta(x)$:
\begin{equation*}
\Psi_{\rm pot} \;=\; \exp \Big\{ \sum_{x\in \bb Z^2_*} \Gamma_T(x) 
\, \{ \eta_1(x) - \eta_0(x) \} \Big\}\; .
\end{equation*}
The last one is the exponential corrector which turns $e^{V(\eta_s)}$
a martingale: $\Psi_{\rm dyn} = \exp\{ - \int_0^1 e^{-V(\eta_s)} \,
L_T e^{V(\eta_s)} \, ds\}$, so that
\begin{align*}
\Psi_{\rm dyn} \;=\; \exp \Big\{ -\; \frac T2 \int_0^t \sum_{x\in\bb
  Z^2} \sum_{y: |y-x|=1} \eta_s(x) \, [1-\eta_s(y)]\,
\{ e^{\Gamma_T(y) - \Gamma_T(x)} -1 \} \, ds\,  \Big\}\;.
\end{align*}
where $\Gamma_T(z)$ has been defined above \eqref{n-56}. 

Assume that the support of $\gamma'$ is contained in $[a,b]\subset
(0,1/2)$. In this case, $|\log \Psi_{\rm stat}|$ and $|\log \Psi_{\rm
  pot}|$ are bounded by $C_0 T^{2b} \ll T$. On the other hand, by a
Taylor's expansion and the harmonicity of $\log |x|$ in $\bb R^2$,
\begin{equation}
\label{n-60}
\frac {\log T}{T} \log \Psi_{\rm dyn} \;=\;
- \, \pi  \, \int \Gamma''(r) \, \bar\mu^T(dr) \;
-\; \int_0^1 W_\gamma (\eta_s) \, ds \; +\; o_T(1)\;,
\end{equation}
where 
\begin{equation}
\label{n-59}
W_\gamma (\eta) \;=\; \frac 1{4\, \log T} \sum_{j=1}^2
\sum_{x\in \bb Z} [\Gamma' (\sigma_T(x))]^2 \, \frac
{x^2_j}{|x|^4}  \,  \big[\eta(x+e_j) - \eta(x)\big]^2\;,
\end{equation}
and $\lim_T o_T(1)=0$.

It follows from the previous estimates that there exists a finite
constant $C_0$ depending only on $\gamma$ such that
\begin{equation}
\label{n-57}
\Big| \, \log \frac{d \bb P_{T, \gamma}}{d \bb P_\alpha} \, \Big|\;\le\;
C_0 \, \frac{T}{\log T}\;\cdot
\end{equation}

Recall from \eqref{n-33} the definition of the functional $J_\gamma$,
and from \eqref{n-25} the definition of the measure $\mu_\delta$.
Next result follows from the estimates of $\Psi_{\rm stat}$,
$\Psi_{\rm pot}$, \eqref{n-60} and \eqref{n-29} to replace
$\phi_{r,\delta}$ by $\psi_{r,\delta}$.

\begin{lemma}
\label{n-l3}
Fix $\gamma \in C^2(\bb R_+, \alpha)$, $0<\delta \le \varrho$.  There
exists a finite constant $C_0$, depending only on $\gamma$, such that
on the set $B^{\delta ,\varrho}_{T, \gamma}$ introduced in \eqref{n-04},
\begin{align*}
\log \frac {d\bb P_\alpha}{d \bb P_{T, \gamma}}
\le\; -\, \frac {T} {\log T} \, J_{\gamma} (\bar\mu^T_\delta)
\;+\; C_0\, \varrho \, \, \frac T{\log T} \;\cdot
\end{align*}
\end{lemma}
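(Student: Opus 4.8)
The plan is to start from the factorization \eqref{n-58}, $d\bb P_{T,\gamma}/d\bb P_\alpha = \Psi_{\rm stat}\,\Psi_{\rm pot}\,\Psi_{\rm dyn}$, so that $\frac{\log T}{T}\log\frac{d\bb P_\alpha}{d\bb P_{T,\gamma}} = -\frac{\log T}{T}\big(\log\Psi_{\rm stat} + \log\Psi_{\rm pot} + \log\Psi_{\rm dyn}\big)$. Since the support of $\gamma'$ lies in $[a,b]\subset(0,1/2)$, $|\log\Psi_{\rm stat}| + |\log\Psi_{\rm pot}| \le C_0 T^{2b}$, so the first two contributions are $o_T(1)$ uniformly in $\eta$, and by \eqref{n-60} the remaining term equals $\pi\int_{\bb R_+}\Gamma''(r)\,\bar\mu^T(dr) + \int_0^1 W_\gamma(\eta_s)\,ds + o_T(1)$ with $W_\gamma$ as in \eqref{n-59}. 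Recalling from \eqref{n-25} and \eqref{n-62} that $\bar\mu^T_\delta$ is the absolutely continuous measure with density $r\mapsto\bar\mu^T(\psi_{r,\delta})$ and that $\bar\mu^T(\psi_{r,\delta}) = \int_0^1\mu^{1,T}(\psi_{r,\delta})(\eta_s)\,ds$, and that by \eqref{n-33} $-J_\gamma(\bar\mu^T_\delta) = \pi\int_{\bb R_+}\Gamma''(r)\,\bar\mu^T(\psi_{r,\delta})\,dr + \pi\int_{\bb R_+}\Gamma'(r)^2\,\sigma\big(\bar\mu^T(\psi_{r,\delta})\big)\,dr$, it suffices to bound the linear piece by the first of these integrals plus $C_0\delta$, and the time integral of $W_\gamma$ by the second plus $\varrho/2$, up to $o_T(1)$ errors on $B^{\delta,\varrho}_{T,\gamma}$. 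For the linear piece one invokes Lemma \ref{n-l14} in its $\bar\mu^T$ version, applied to $J=\Gamma''$ (which is continuous and Lipschitz with compact support in $(0,1/2)$ since $\gamma\in C^2$; if the $C^1_K$ hypothesis is taken literally, first approximate $\Gamma''$ in the supremum norm by a $C^1$ function with slightly larger support, at a cost tending to $0$): this gives $\pi\int_{\bb R_+}\Gamma''\,d\bar\mu^T \le \pi\int_{\bb R_+}\Gamma''(r)\,\bar\mu^T(\psi_{r,\delta})\,dr + C_0\delta$ uniformly in $\eta$ for $T$ large.

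For the quadratic piece I would compare $W_\gamma$ with the ergodic functional $W^\delta_{T,\gamma}$ of \eqref{n-04}. Summing over $j$ using $\sum_j x_j^2 = |x|^2$ and unfolding the definition \eqref{n-27} of $\Psi_{\delta,T}(j,x,\eta)$, one gets the exact identity $W^\delta_{T,\gamma}(\eta) = 2W_\gamma(\eta) - \frac{1}{\log T}\sum_{x\in\bb Z^2_*}\frac{1}{|x|^2}\,\Gamma'(\sigma_T(x))^2\,\sigma\big(m_{\delta,T}(x,\eta)\big)$, where, as $\Gamma'$ vanishes outside $(0,1/2)$, $m_{\delta,T}(x,\eta) = \mu^{1,T}(\psi_{\sigma_T(x),\delta})$ at every relevant site. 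Applying Lemma \ref{n-l22} (legitimate by Remark \ref{n-l23}) to $H(r)=\Gamma'(r)^2\sigma\big(\mu^{1,T}(\psi_{r,\delta})\big)$, and using \eqref{n-29} to interchange $\phi_{r,\delta}$ with $\psi_{r,\delta}$ wherever it appears, that Riemann sum becomes $2\pi\int_{\bb R_+}\Gamma'(r)^2\sigma\big(\mu^{1,T}(\psi_{r,\delta})\big)\,dr + o_T(1)$ uniformly in $\eta$, so that $W_\gamma(\eta) = \tfrac12 W^\delta_{T,\gamma}(\eta) + \pi\int_{\bb R_+}\Gamma'(r)^2\sigma\big(\mu^{1,T}(\psi_{r,\delta})(\eta)\big)\,dr + o_T(1)$. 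Integrating over $s\in[0,1]$: on $B^{\delta,\varrho}_{T,\gamma}$ the first term contributes at most $\varrho/2$ by definition \eqref{n-04}; for the second, I would use the concavity of the mobility $\sigma$ together with Jensen's inequality for the probability measure $ds$ on $[0,1]$ to obtain $\int_0^1\sigma\big(\mu^{1,T}(\psi_{r,\delta})(\eta_s)\big)\,ds \le \sigma\big(\int_0^1\mu^{1,T}(\psi_{r,\delta})(\eta_s)\,ds\big) = \sigma\big(\bar\mu^T(\psi_{r,\delta})\big)$, and since $\Gamma'(r)^2\ge 0$ this inequality survives multiplication by $\Gamma'(r)^2$ and integration in $r$. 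Hence $\int_0^1 W_\gamma(\eta_s)\,ds \le \varrho/2 + \pi\int_{\bb R_+}\Gamma'(r)^2\sigma\big(\bar\mu^T(\psi_{r,\delta})\big)\,dr + o_T(1)$ on $B^{\delta,\varrho}_{T,\gamma}$.

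Collecting the two bounds, $\frac{\log T}{T}\log\frac{d\bb P_\alpha}{d\bb P_{T,\gamma}} \le -J_\gamma(\bar\mu^T_\delta) + C_0\delta + \varrho/2 + o_T(1)$ on $B^{\delta,\varrho}_{T,\gamma}$; using $\delta\le\varrho$ and absorbing the uniform $o_T(1)$ into the constant (for $T$ large) yields the asserted inequality after multiplying through by $T/\log T$. I expect the quadratic term to be the main obstacle: one must keep exact track of the numerical constants ($\pi$ versus $2\pi$) through the summation by parts and the Riemann-sum approximation, and — more substantively — exploit the concavity of $\sigma$ to move the time average inside $\sigma$. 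That concavity step is the only genuine inequality in the chain (everything else is an identity up to $o_T(1)$), and it is precisely what makes the error go in the favorable direction, as anticipated in Remark \ref{n-l18}.
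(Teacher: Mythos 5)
Your proposal is correct and follows the same route the paper intends (the paper only gives a one-line hint for this lemma). You factorize $d\bb P_{T,\gamma}/d\bb P_\alpha$ via \eqref{n-58}, discard $\Psi_{\rm stat},\Psi_{\rm pot}$ as $o(T/\log T)$, use \eqref{n-60} for $\Psi_{\rm dyn}$, convert the linear term to $\pi\int\Gamma''(r)\,\bar\mu^T(\psi_{r,\delta})\,dr$ via Lemma \ref{n-l14}, and handle the quadratic term via the exact algebraic identity $W^\delta_{T,\gamma}=2W_\gamma - (\text{Riemann sum})$, the $\bb R^{(1)}$-restriction of $\Gamma'$, Lemma \ref{n-l22} with Remark \ref{n-l23}, the definition \eqref{n-04} of the ergodic set, and Jensen's inequality for the concave $\sigma$ to move the time average inside. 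You also correctly flag the minor regularity point ($\Gamma''\in C^0$ rather than $C^1$ when $\gamma\in C^2$) and the correct resolution by sup-norm approximation, and you rightly single out the concavity step as the only genuine inequality in the chain — exactly what Remark \ref{n-l18} anticipates.
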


\begin{remark}
\label{n-l11}
In the proof of the large deviations upper bound, the pieces
$\Psi_{\rm stat}$ and $\Psi_{\rm pot}$ of the Radon-Nikodym derivative
$d \bb P_{T, \gamma}/d\bb P_\alpha$ are the ones which forbid
perturbations $\gamma$ which are not constant outside a compact subset
of $(0,1/2)$. Indeed, if the support of $\gamma'$ has a nonempty
intersection with $(1/2,\infty)$ $\Psi_{\rm stat}$ and $\Psi_{\rm
  pot}$ are of an order much larger than $\exp\{T/\log T\}$ because of
the volume of the region $\{x\in \bb Z^2 : T^a \le |x| \le T^b\}$ for
$a\ge 1/2$.

This is not the case of $\Psi_{\rm dyn}$ due to the presence of the
factor $|x|^{-2}$. Indeed, as shown in the proof of Proposition
\ref{n-l6}, to estimate $\Psi_{\rm dyn}$ in the case of a perturbation
$\gamma$ which is not constant outside a compact subset of $(0,1/2)$,
we may divide $\bb Z^2$ in three regions $\bb R^{(1)}_{T,\kappa}$,
$\bb R^{(2)}_{T,\kappa}$ and $\bb R^{(3)}_{T,\kappa}$. All terms in
the first region belong to the set $\{x\in \bb Z^2 : |x| \le
T^{1/2-\kappa} \}$ and can be handled as in \cite{cll}. The sum over
$\bb R^{(2)}_{T,\kappa}$ is negligible if $\kappa$ is small
(cf. equation \eqref{n-18}), while the sum over $\bb
R^{(3)}_{T,\kappa}$ is fixed, as proved in Lemma \ref{n-l2}.  

This explains why we are able to prove an energy estimate on $\bb R_+$
and not just on $(0,1/2)$: the expression which appears in the proof
of the energy estimate stated in Lemma \ref{n-l9} is similar to
$\Psi_{\rm dyn}$ and there are no terms corresponding to $\Psi_{\rm
  stat}$ and $\Psi_{\rm pot}$.
\end{remark}

\smallskip\noindent{\bf F. Proof of the upper bound.}  We are now in a
position to prove the upper bound. Fix $\gamma\in C^2(\bb R_+,
\alpha)$, and let $\Gamma$ be the function associated to $\gamma$ by
\eqref{n-02}.  Fix $\varrho>0$, $\delta>0$, $\varepsilon>0$,
$\kappa_1>0$, $\kappa_2>0$, $q\ge 1$, $n\ge 1$, $m\ge 1$, $\ell \ge
1$, and recall the definition of the sets $A_{n,\kappa}$,
$B_{q,\ell}$, $C_{m, \kappa}$, $B^{\delta ,\varrho}_{T, \gamma}$
introduced in \eqref{n-38}, \eqref{n-26}, \eqref{n-43} and
\eqref{n-04}.  Let $B^{\varepsilon,T}_{q,\ell} = \{\bar
\mu^T_\varepsilon \in B_{q,\ell}\}$.  It follows from \eqref{n-21},
\eqref{n-28}, \eqref{n-22}, \eqref{n-42} and \eqref{n-l13} that for
any subset $A$ of $\mc M_c$,
\begin{align*}
& \limsup_{T\to\infty} \frac{\log T} T \log \bb P_\alpha
[ \bar \mu^T \in A] \\
&\quad \le \; \max\Big\{ \limsup_{T\to\infty}
\frac {\log T} T \log\bb P_\alpha \Big[ \bar \mu^T \in A \cap
A_{n,\kappa_1} \cap C_{m,\kappa_2} \,,\, 
B^{\delta ,\varrho}_{T, \gamma} \cap B^{\varepsilon,T}_{q,\ell}  \Big] \, , \,  
R_\gamma \Big\} \;,
\end{align*}
where $R_\gamma = \max\{ C_\gamma (\delta , \varrho) \,,\, C
(\varepsilon, q, \ell)\}$ and
\begin{equation}
\label{n-44}
\limsup_{\delta \to 0} C_\gamma (\delta ,\varrho) \;=\;  -\infty
\quad\text{and}\quad \limsup_{\varepsilon\to 0} C (\varepsilon , q, \ell)
\;\le\; -(\ell-1)
\end{equation}
for all $\varrho>0$, $q\ge 1$, $\ell\ge 1$, $\gamma\in C^2(\bb R_+,
\alpha)$.

To estimate the right hand side of the penultimate formula, observe
that
\begin{equation*}
\bb P_\alpha [ \bar \mu^T \in D \, , \,
B^{\delta ,\varrho}_{T, \gamma} \cap B^{\varepsilon,T}_{q,\ell} ] \; =\;
\bb E_{T, \gamma} \Big[ \frac {d\bb P_\alpha}{d\bb P_{T, \gamma}}
\mb 1 \big\{\bar \mu^T \in D \,,\,
B^{\delta ,\varrho}_{T, \gamma} \cap B^{\varepsilon, T}_{q,\ell} \big\} \Big]\; ,
\end{equation*}
where $\bb E_{T, \gamma}$ represents the expectation with respect to
$\bb P_{T, \gamma}$, and $D = A \cap A_{n,\kappa_1}\cap C_{m,\kappa_2}$.
By Lemma \ref{n-l3}, on the set $B^{\delta ,\varrho}_{T, \gamma}$, if
$\delta \le \varrho$,
\begin{equation*}
\label{eq:6}
\log \frac { d\bb P_\alpha}{d \bb P_{T, \gamma}}
\;\le\;  -\, \frac T{\log T} \, J_{\gamma, \delta} (\bar\mu^T) 
\; +\; C(\gamma)\, \varrho \, \frac{T}{\log T} \; \cdot
\end{equation*}
where $J_{\gamma, \delta} : \mc M_c \to \bb R$ is the functional given
by
\begin{equation*}
J_{\gamma, \delta} (\mu) \;=\; J_{\gamma} (\mu_\delta)\;.
\end{equation*}

On the set $\{\bar \mu^T \in A_{n,\kappa_1} \cap C_{m,\kappa_2} \}
\cap B^{\varepsilon, T}_{q,\ell}$, we may replace the functional
$J_{\gamma, \delta} (\bar\mu^T)$ by $J^{\varepsilon,
  q,\ell,n,\kappa_1, m,\kappa_2}_{\gamma, \delta} (\bar\mu^T)$, where
\begin{equation*}
J^{\varepsilon, q,\ell,n,\kappa_1, m,\kappa_2}_{\gamma, \delta} (\mu) \;=\;
\begin{cases}
J_{\gamma, \delta} (\mu) & \text{if $\mu \in A_{n,\kappa_1} \cap C_{m,\kappa_2}$
  and $\mu_\varepsilon \in B_{q,\ell}\, $,} \\
+ \infty & \text{otherwise.}
\end{cases}
\end{equation*}
To avoid long formulas, write $J^{\varepsilon, q,\ell,n,\kappa_1,
  m,\kappa_2}_{\gamma, \delta}$ as $J^{\star}_{\gamma, \delta}$. Note
that $J^{\star}_{\gamma, \delta}$ is lower semi-continuous because the
set $A_{n,\kappa_1} \cap C_{m,\kappa_2} \cap \{\mu :
\mu_\varepsilon \in B_{q,\ell}\}$ is closed.

Up to this point, we proved that for all $\delta\le\varrho$,
\begin{align*}
& \limsup_{T\to\infty} \frac{\log T} T \log \bb P_\alpha
[ \bar \mu^T \in A] \\
&\qquad \le \; \max \Big\{ 
\sup_{\mu \in A}  - J^{\star}_{\gamma, \delta} (\mu) 
\,+\, C (\gamma)\, \varrho \, , \,  
C_\gamma (\delta , \varrho) \,,\, C (\varepsilon
, q, \ell) \Big\} \;.
\end{align*}
where $C(\gamma)$ is a finite constant which depends only on $\gamma$,
while the other terms satisfy \eqref{n-44}.

Optimize the previous inequality with respect to all parameters and
assume that the set $A$ is closed (and therefore compact because so is
$\mc M_c$).  Since, for each fixed set of parameters, the functional
$J^{\star}_{\gamma, \delta}$ is lower semi-continuous, we may apply
the arguments presented in \cite[Lemma A2.3.3]{kl} to exchange the
supremum with the infimum.  In this way we obtain that the last
expression is bounded above by
\begin{equation*}
\sup_{\mu \in A} 
\inf_{\substack{\varepsilon, q,\ell,n,\kappa_1, m,\kappa_2\\
 \gamma, \delta \le \varrho}} 
\max \Big\{ -   J^{\star}_{\gamma, \delta} (\mu) 
\,+\, C (\gamma)\, \varrho \, , \,  
C_\gamma (\delta , \varrho) \,,\, C (\varepsilon
, q, \ell) \Big\} \;.
\end{equation*}

Fix $\mu \in \mc M_c$, and let $n \uparrow\infty$, and
$\kappa_1\downarrow 0$, and then $m \uparrow\infty$, and
$\kappa_2\downarrow 0$ in $J^{\star}_{\gamma, \delta} (\mu)$. Keep in
mind that $\mu$ is fixed as well as $J_{\gamma, \delta} (\mu)$, the
only object which is changing with the variables $n$, $\kappa_1$, $m$
and $\kappa_2$ is the set at which $J^{\star}_{\gamma, \delta}$ takes
the value $+\infty$. Use the closeness of the sets $A_{n,\kappa_1}$,
$C_{m,\kappa_2}$ to conclude that the previous expression is bounded
by
\begin{equation*}
\sup_{\mu \in A} 
\inf_{\substack{\varepsilon, q,\ell \\
 \gamma, \delta \le \varrho}} 
\max \Big\{ -   J^{\varepsilon, q,\ell}_{\gamma, \delta} (\mu) 
\,+\, C (\gamma)\, \varrho \, , \,  
C_\gamma (\delta , \varrho) \,,\, C (\varepsilon
, q, \ell) \Big\} \;,
\end{equation*}
where
\begin{equation*}
J^{\varepsilon, q,\ell}_{\gamma, \delta} (\mu) \;=\; 
\left\{
\begin{array}{ll}
 J_{\gamma, \delta} (\mu) & \text{if $\mu \in \mc M_{0,\alpha}$ and 
$\mu_\varepsilon \in B_{q,\ell}\,$,} \\
+\infty & \text{otherwise,}
\end{array}
\right.
\end{equation*}
and $\mc M_{0,\alpha}$ is the set introduced just below
\eqref{n-39}. 

Let now $\varepsilon\downarrow 0$. We claim that for all $\mu \in \mc M_c$,
\begin{equation}
\label{n-45}
J^{q,\ell}_{\gamma, \delta} (\mu) \;\le\;
\liminf_{\varepsilon\to 0} J^{\varepsilon, q,\ell}_{\gamma, \delta}
(\mu)\;, 
\end{equation}
where
\begin{equation}
\label{n-46}
J^{q,\ell}_{\gamma, \delta} (\mu) \;=\; 
\left\{
\begin{array}{ll}
J_{\gamma, \delta} (\mu) & \text{if $\mu \in \mc M_{0,\alpha} \cap B_{q,\ell}\,$,} \\
+\infty & \text{otherwise,}
\end{array}
\right.
\end{equation}
Indeed, fix $\mu\in \mc M_c$. We may assume that $\mu \in \mc
M_{0,\alpha}$, otherwise $J^{\varepsilon, q,\ell}_{\gamma, \delta}
(\mu) = J^{q,\ell}_{\gamma, \delta} (\mu) = \infty$ for all
$\varepsilon>0$. Note that $\mu_\varepsilon \to \mu$ as
$\varepsilon\downarrow 0$. Since $B_{q,\ell}$ is a closed set, if
$\mu\not\in B_{q,\ell}$, $\mu_\varepsilon\not\in B_{q,\ell}$ for
$\varepsilon$ small enough and both sides of \eqref{n-45} are equal to
$+\infty$. It remains to consider the case $\mu\in B_{q,\ell}$. Here,
by definition, $J^{q,\ell}_{\gamma, \delta} (\mu) = J_{\gamma, \delta}
(\mu) \le J^{\varepsilon, q,\ell}_{\gamma, \delta} (\mu)$ for all
$\varepsilon>0$, which proves claim \eqref{n-45}.

In view of the second bound in \eqref{n-44}, up to this point we
proved that for all closed subset $A$ of $\mc M_c$,
\begin{align*}
& \limsup_{T\to\infty} \frac{\log T} T \log \bb P_\alpha
[ \bar \mu^T \in A] \\
&\qquad \le \; - \, \inf_{\mu \in A} 
\sup_{q,\ell, \gamma, \delta \le \varrho}
\min \Big\{ 
J^{q,\ell}_{\gamma, \delta} (\mu) 
\,-\, C (\gamma)\, \varrho \, , \,  
- C_\gamma (\delta , \varrho) \,,\, \ell - 1 \Big\} \;,
\end{align*}
where $J^{q,\ell}_{\gamma, \delta}$ is given by \eqref{n-46}. We claim
that for all $\mu\in \mc M_c$,
\begin{equation}
\label{n-47}
J^{\ell}_{\gamma, \delta} (\mu) \;\le\;
\sup_{q} J^{q,\ell}_{\gamma, \delta} (\mu) \;,
\end{equation}
where
\begin{equation*}
J^{\ell}_{\gamma, \delta} (\mu) \;=\; 
\left\{
\begin{array}{ll}
J_{\gamma, \delta} (\mu) & \text{if $\mu \in \mc
  M_{0,\alpha}$ and $\mc Q_{\alpha}(\mu) \le \ell \,$,} \\
+\infty & \text{otherwise.}
\end{array}
\right.
\end{equation*}
Indeed, suppose first that $\mc Q_{\alpha}(\mu) > \ell$. In this case,
since $H_p$ is a dense sequence, for all $q$ sufficiently large, 
$\max_{1\le p\le 1} \mc Q_{\alpha, H_p}(\mu) > \ell$ so that both
sides of \eqref{n-47} are equal to $+\infty$. On the other hand, if
$\mc Q_{\alpha}(\mu) \le \ell$ both sides are equal to $J_{\gamma,
  \delta} (\mu)$. This proves the claim.

We now assert that for all $\mu\in \mc M_c$,
\begin{equation}
\label{n-48}
J^{\mc Q}_{\gamma, \delta} (\mu) \;\le\;
\sup_{\ell} \min \Big\{ 
J^{\ell}_{\gamma, \delta} (\mu) \, , \,  
\ell - 1 \Big\}  \;,
\end{equation}
where
\begin{equation}
\label{n-49}
J^{\mc Q}_{\gamma, \delta} (\mu) \;=\; 
\left\{
\begin{array}{ll}
J_{\gamma, \delta} (\mu) & \text{if $\mu \in \mc
  M_{0,\alpha}$ and $\mc Q_{\alpha}(\mu) < \infty \,$,} \\
+\infty & \text{otherwise,}
\end{array}
\right.
\end{equation}
Indeed, if $J^{\ell}_{\gamma, \delta} (\mu) = \infty$ for all $\ell\ge
1$, there is nothing to prove. If this is note the case, by definition
of $J^{\ell}_{\gamma, \delta}$, $\mc Q_\alpha(\mu) \le m$ for some
$m\ge 1$, and $J^{\ell}_{\gamma, \delta} (\mu) = J_{\gamma, \delta}
(\mu) = J^{\mc Q}_{\gamma, \delta} (\mu)$. This proves
\eqref{n-48}. Recall from \eqref{n-63} that we denote by $\mc
M^{\mc Q}_{0,\alpha}$ the set of measures $\mu$ in $\mc M_c$ such that $\mu
\in \mc M_{0,\alpha}$ and $\mc Q_{\alpha}(\mu) < \infty$, which is the
set appearing in the definition of $J^{\mc Q}_{\gamma, \delta}$.

Putting together the previous two estimates we conclude that for all
closed subset $A$ of $\mc M_c$,
\begin{equation}
\label{n-50}
\begin{aligned}
& \limsup_{T\to\infty} \frac{\log T} T \log \bb P_\alpha
[ \bar \mu^T \in A] \\
&\qquad \le \; - \, \inf_{\mu \in A} 
\sup_{\gamma, \delta \le \varrho}
\min \Big\{ J^{\mc Q}_{\gamma, \delta} (\mu) 
\,-\, C (\gamma)\, \varrho \, , \,  
- C_\gamma (\delta , \varrho) \Big\} \;,
\end{aligned}
\end{equation}
where $J^{\mc Q}_{\gamma, \delta}$ is the functional given by
\eqref{n-49}. 

It remains to let $\delta\to 0$. Since $J_\gamma$ is lower
semi-continuous and since $\mu_\delta \to \mu$ as $\delta \to 0$, for
all $\mu\in \mc M_c$,
\begin{equation*}
J^{\mc Q}_{\gamma} (\mu) \;\le\;
\limsup_{\delta\to 0} J^{\mc Q}_{\gamma, \delta} (\mu)\;,
\end{equation*}
where $J^{\mc Q}_{\gamma}$ is defined in \eqref{n-53}.  Hence, letting
$\delta\to 0$ in \eqref{n-50} and then $\varrho \to 0$, we conclude
that for all closed subsets $A$ of $\mc M_c$,
\begin{equation*}
\limsup_{T\to\infty} \frac{\log T} T \log \bb P_\alpha
[ \bar \mu^T \in A] \; \le \; - \, \inf_{\mu \in A} 
\sup_{\gamma} J^{\mc Q}_{\gamma} (\mu)
\;= \; - \, \inf_{\mu \in A} J^{\mc Q} (\mu) \;,
\end{equation*}
where $J^{\mc Q}$ is the functional given by \eqref{n-54}.  This is
the upper bound of the large deviations principle, in view of Lemma
\ref{n-l17} .

\section{ The lower bound}
\label{n-sec6}

We prove in this section the lower bound of the large deviations
principle. Most of the results are taken from \cite{cll} and are
repeated here in sake of completeness.

Consider a functional $\mc J : \mc M_c \to \bb R_+ \cup \{+\infty\}$.
A subset $\mc M^*$ of $\mc M_c$ is said to be $\mc J$-dense if for
each $\mu\in\mc M_c$ such that $\mc J(\mu) <\infty$, there exists a
sequence $\{\mu_n \in \mc M^* : n\geq 1\}$ converging vaguely to $\mu$
and such that $\lim_{n\to\infty} \mc J (\mu_n) = \mc J (\mu)$.

Denote by $\mc M^*_0$ the subset of $\mc M_c$ formed by the measures
in $\mc M_{0}$ whose density $m$ is smooth, bounded away from $0$ and
$1$, and for which $m'$ has a compact support in $(0,\infty)$. The
next result follows from the proof of \cite[Lemma 4.1]{blm}. 

\begin{lemma}
\label{n-l20}
Recall from \eqref{n-39} the definition of the functional $\mc Q$.
The set $\mc M^*_0$ is $\mc Q$-dense.
\end{lemma}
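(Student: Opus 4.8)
The plan is to show that every measure $\mu(dr) = m(r)\,dr$ in $\mc M_0$ with $\mc Q(\mu) < \infty$ can be approximated in the vague topology by a sequence in $\mc M^*_0$ along which the energy $\mc Q$ converges. By \cite[Lemma 4.1]{blm} (recalled above), $\mc Q(\mu) < \infty$ forces $m$ to have a generalized derivative $m'$ with $\int_0^\infty [m'(r)]^2/\sigma(m(r))\,dr < \infty$, so $\mc Q$ is in fact the explicit Dirichlet-type functional $\tfrac14 \int_0^\infty [m']^2/\sigma(m)\,dr$; the task is therefore a standard density argument for this functional, but carried out so as to land inside $\mc M^*_0$, i.e. the approximants must be smooth, bounded strictly away from $0$ and $1$, and have $m'$ of compact support in $(0,\infty)$.

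The construction would proceed in three stages, each of which is routine once the functional is written explicitly. First, \emph{truncate the support of the derivative}: since $\int_0^\infty [m']^2/\sigma(m)\,dr < \infty$, cutting $m$ off to be constant near $0$ and near $+\infty$ — say replacing $m$ by its values $m(\varepsilon)$ on $(0,\varepsilon]$ and by a constant on a far tail, interpolated on short intervals — changes $\mu$ vaguely by a small amount and changes $\mc Q$ by a quantity that vanishes as the cutoff parameters go to the endpoints, using absolute continuity of the integral. Second, \emph{push the density into the open interval $(0,1)$}: replace $m$ by $(1-\theta)m + \theta/2$ for small $\theta>0$; by convexity and lower semicontinuity of $\mc Q$ (again \cite[Lemma 4.1]{blm}), together with the continuity of $\sigma$ on compact subsets of $(0,1)$, one gets $\mc Q((1-\theta)m + \theta/2) \to \mc Q(m)$ as $\theta \to 0$, and this modification keeps the derivative compactly supported. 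Third, \emph{mollify}: convolve the resulting density with a smooth approximate identity $\rho_\kappa$; this produces a smooth density, preserves the bounds away from $0$ and $1$, keeps $m'$ compactly supported (the support grows by at most $\kappa$), converges vaguely, and $\mc Q(m * \rho_\kappa) \to \mc Q(m)$ by lower semicontinuity for the liminf together with Jensen's inequality (convexity of $(p,a)\mapsto p^2/\sigma(a)$ on $\mathbb R\times(0,1)$) for the limsup. A diagonal extraction over the three parameters then yields the desired sequence.

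The one point requiring a little care — and the step I would flag as the main obstacle — is ensuring that \emph{all three properties defining $\mc M^*_0$ survive simultaneously} and that the energy genuinely converges rather than merely being lower semicontinuous; in particular one must order the limits correctly (first truncate, then lift off $0$ and $1$, then mollify) so that mollification does not destroy the compact support of $m'$ and so that the $\sigma(m)$ in the denominator stays bounded below, which is exactly what licenses the continuity and Jensen estimates. The vague convergence throughout is immediate since all densities are uniformly bounded by $1$ and the modifications are $L^1_{\mathrm{loc}}$-small. Since this is precisely the situation treated in the proof of \cite[Lemma 4.1]{blm}, the cleanest writeup is to invoke that argument directly, noting only that it delivers approximants with the regularity required here.
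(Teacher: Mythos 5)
Your proposal is correct and follows the approach the paper implicitly relies on. The paper does not spell out the argument, citing instead the proof of \cite[Lemma 4.1]{blm}; your three-stage construction --- truncate so that $m'$ has compact support, lift $m$ strictly into $(0,1)$ via $(1-\theta)m+\theta/2$ (with the convexity of $\mc Q$ and $\mc Q(\tfrac12\,dr)=0$ giving $\limsup_\theta\mc Q(m_\theta)\le\mc Q(m)$, and lower semicontinuity the reverse), then mollify (with joint convexity of $(p,a)\mapsto p^2/\sigma(a)$ and Jensen handling the $\limsup$) --- is exactly the standard route and is the same scheme the paper itself deploys in the proof of Corollary \ref{n-l21}. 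You correctly identify the ordering of the three steps and the preservation of the defining properties of $\mc M^*_0$ as the only points requiring care, and your handling of them is sound.
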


Let $\mc M^*_{0,\alpha} = \mc M^*_0 \cap \mc M_{0,\alpha}$. Fix a
measure $\mu$ in $\mc M^*_{0,\alpha}$, and denote its density by
$m$. Since $m'$ has support contained in $(0,1/2)$ and $m(r) = \alpha$
for $r\ge 1/2$, $m$ belongs to $C^2(\bb R_+, \alpha)$. Hence, $\mc
M^*_{0,\alpha}$ corresponds to the measures whose density belongs to
$C^2(\bb R_+, \alpha)$.

\begin{corollary}
\label{n-l21}
The set $\mc M^*_{0,\alpha}$ is $I_{Q,\alpha}$-dense.
\end{corollary}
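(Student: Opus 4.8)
The plan is to deduce Corollary~\ref{n-l21} from Lemma~\ref{n-l20} (the $\mc Q$-density of $\mc M^*_0$) together with Lemma~\ref{n-l17}, which identifies $I_{\mc Q,\alpha}$ with $\pi\,\mc Q$ on $\mc M_{0,\alpha}$ and with $+\infty$ off $\mc M_{0,\alpha}$. Fix $\mu \in \mc M_c$ with $I_{\mc Q,\alpha}(\mu) < \infty$. By definition of $I_{\mc Q,\alpha}$, necessarily $\mu \in \mc M_{0,\alpha}$ and $\mc Q(\mu) < \infty$; write $\mu(dr) = m(r)\,dr$ with $m(r) = \alpha$ for $r \ge 1/2$. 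The goal is to produce a sequence $\mu_n \in \mc M^*_{0,\alpha}$ (equivalently, with density in $C^2(\bb R_+,\alpha)$) with $\mu_n \to \mu$ vaguely and $I_{\mc Q,\alpha}(\mu_n) \to I_{\mc Q,\alpha}(\mu)$, i.e.\ $\mc Q(\mu_n) \to \mc Q(\mu)$.

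First I would invoke Lemma~\ref{n-l20} to get a sequence $\nu_n \in \mc M^*_0$ with $\nu_n \to \mu$ vaguely and $\mc Q(\nu_n) \to \mc Q(\mu)$. The issue is that the densities $g_n$ of the $\nu_n$ need not equal $\alpha$ on $(1/2,\infty)$, so $\nu_n \notin \mc M_{0,\alpha}$ in general. The fix is to modify the approximating sequence so that it respects the boundary condition $m \equiv \alpha$ on $[1/2,\infty)$. Since the energy $\mc Q$ of $\mu$ is supported on $(0,1/2)$ (as $m' = 0$ a.e.\ on $[1/2,\infty)$), one can run the whole mollification/truncation argument of the proof of \cite[Lemma 4.1]{blm} on the interval $(0,1/2)$ only, keeping the value $\alpha$ fixed on $[1/2,\infty)$: convolve $m\mathbf{1}_{(0,1/2)}$-type data with a smooth kernel, truncate the density away from $0$ and $1$, and glue smoothly to the constant $\alpha$ near $r = 1/2$, arranging that the glued density $m_n$ has $m_n' $ compactly supported in $(0,1/2)$, is smooth, bounded away from $0$ and $1$, and satisfies $m_n(r) = \alpha$ for $r \ge 1/2$ — so $m_n \in C^2(\bb R_+,\alpha)$ and $\mu_n(dr) := m_n(r)\,dr \in \mc M^*_{0,\alpha}$. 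Vague convergence $\mu_n \to \mu$ is immediate from $L^1_{\rm loc}$-convergence of the densities, and $\mc Q(\mu_n) \to \mc Q(\mu)$ follows from the corresponding convergence in \cite{blm} once one checks the boundary gluing contributes vanishing energy in the limit (one uses here that $m$ is already continuous with value $\alpha$ at $r=1/2$ because $\mc Q(\mu)<\infty$ forces $m$ to have a generalized derivative in $L^2(\sigma(m)^{-1}\,dr)$ near $1/2$, hence $m$ has a limit at $1/2^-$, which must be $\alpha$ by absolute continuity of the integral). Then $I_{\mc Q,\alpha}(\mu_n) = \pi\,\mc Q(\mu_n) \to \pi\,\mc Q(\mu) = I_{\mc Q,\alpha}(\mu)$, establishing $I_{\mc Q,\alpha}$-density.

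\textbf{The main obstacle} is the boundary gluing near $r = 1/2$: one must approximate $m$ on $(0,1/2)$ by smooth functions pinned to the value $\alpha$ at the endpoint $1/2$ (and bounded away from $0,1$), with energy converging, and verify that the interpolation layer near $1/2$ adds negligible energy. This is exactly the content of the argument in \cite[Lemma 4.1]{blm} carried out on a half-open interval with one prescribed endpoint value, and it goes through because the finite-energy condition $\int_0^{1/2}[m']^2/\sigma(m)\,dr < \infty$ controls the oscillation of $m$ near $1/2$ (via Cauchy--Schwarz, $m$ is $1/2$-Hölder-ish with a modulus given by the tail of the energy integral), so the energy of a smooth monotone interpolation between $m$ on a small left-neighbourhood of $1/2$ and the constant $\alpha$ can be made arbitrarily small. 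Once this is in place the remaining bookkeeping — that the modified sequence still lies in $\mc M^*_0$, stays in $\mc M_c$ (here the density bound $\le 1$ and the interval estimate are automatic since the density is bounded by $1$), and converges vaguely — is routine, and the energy convergence transfers through Lemma~\ref{n-l17}.
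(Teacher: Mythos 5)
Your argument is correct and, in substance, the same as the paper's. Both proofs start from Lemma~\ref{n-l20} and both enforce the constraint $m\equiv\alpha$ on $[1/2,\infty)$ by truncation, mollification with a compactly supported kernel, and smooth gluing near $r=1/2$; the verification $I_{\mc Q,\alpha}(\mu_n)=\pi\,\mc Q(\mu_n)\to\pi\,\mc Q(\mu)=I_{\mc Q,\alpha}(\mu)$ is then routine.

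The one place you are actually more careful than the paper is precisely the point you single out as the main obstacle. The paper reduces to showing that every $\mu\in\mc M^*_0$ can be approximated in $\mc M^*_{0,\alpha}$ with convergent energy, and then sets $u_\delta(r)=m(r)\,\mb 1\{r\le 1/2-\delta\}+\alpha\,\mb 1\{r>1/2-\delta\}$ and $m_\delta=u_\delta*\varphi_{\delta/2}$. Read literally, that intermediate claim is too strong: for a constant density $m\equiv\beta\neq\alpha$, which lies in $\mc M^*_0$, no sequence in $\mc M^*_{0,\alpha}$ can converge vaguely to $\mu$, let alone with $\mc Q$ convergent. What makes the construction work in context is exactly the boundary estimate you make explicit: finiteness of $\mc Q$ gives $m'\in L^2$, hence $m$ has an absolutely continuous representative with $m(1/2^-)=\alpha$, and Cauchy--Schwarz yields
\begin{equation*}
\frac{|m(1/2-\delta)-\alpha|^2}{\delta}\;\le\;\int_{1/2-\delta}^{1/2}[m'(r)]^2\,dr\;\longrightarrow\;0,
\end{equation*}
so the interpolation layer of width $O(\delta)$ contributes vanishing energy. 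Your write-up surfaces this; apart from that and the slight reorganization (one-step gluing of the original $\mu$ versus the paper's two-step $\mu\to\nu_n\to\mu_{n,\delta}$ with a diagonal), the two proofs coincide.
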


\begin{proof}
Fix $\mu$ such that $I_{Q,\alpha}(\mu)<\infty$. By definition of
$I_{Q,\alpha}$, $\mu$ belongs to $\mc M_{0,\alpha}$ and
$I_{Q,\alpha}(\mu) = \pi \mc Q(\mu)$. By the previous lemma, there
exists a sequence $\nu_n \in \mc M^*_0$ such that $\nu_n \to \mu$ and
$Q(\nu_n) \to Q(\mu)$.  To prove the corollary it is therefore enough
to show that for every $\mu \in \mc M^*_0$ there exists a sequence
$\mu_n \in \mc M^*_{0,\alpha}$ such that $\mu_n \to \mu$ and $Q(\mu_n)
\to Q(\mu)$.

Fix such a measure $\mu(dr) = m(r)\, dr$. Since $\mu$ belongs to $\mc
M^*_0$,
\begin{equation*}
Q(\mu) \;=\; \frac 14 \int_0^\infty \frac{[m'(r)]^2}{\sigma(m(r))}
\, dr\;.
\end{equation*}
Fix $\delta>0$, and let $u_\delta (r) = m(r) \, \bs 1\{ r \le (1/2) -
\delta \} + \alpha \, \bs 1\{ r > (1/2) - \delta \}$. Extend
$u_\delta$ to $(-\infty,0)$ by setting $u_\delta(r)= m(0)$ for $r\le
0$.  Let $m_\delta = u * \varphi_{\delta/2}$, where
$\varphi_{\delta/2}$ is a smooth approximation of the identity whose
support is contained in $[-\delta/2,\delta/2]$. Denote by $\mu_\delta$
the measure on $\bb R_+$ whose density is $m_\delta$.

It is clear that $\mu_\delta$ belongs to $\mc M^*_{0,\alpha}$ for
$\delta$ sufficiently small and that $\mu_\delta \to \mu$ as $\delta
\to 0$. By the lower semicontinuity of $\mc Q$, $\mc Q(\mu) \le
\liminf_{\delta\to 0} \mc Q(\mu_\delta)$. On the other hand, by
construction, for $\delta$ sufficiently small,
\begin{equation*}
\mc Q(\mu_\delta) \;=\; \int_0^{1/2} 
\frac{[m_\delta'(r)]^2}{\sigma(m_\delta(r))}\, dr \;\to\;
\int_0^{1/2} 
\frac{[m'(r)]^2}{\sigma(m(r))}\, dr \;\le\; \mc Q(\mu)\;. 
\end{equation*}
Hence, $\limsup_{\delta\to 0} \mc Q(\mu_\delta) \le \mc Q(\mu)$, which proves
the corollary.
\end{proof}

We are now in a position to prove the lower bound. We start with a law
of large numbers for the polar empirical measure under the measure
$\bb P_{T, \gamma}$. This result is Lemma 6.1 in \cite{cll}. It
follows from the stationarity of the measure $\nu_{T, \gamma}$ and
from the fact that it is a product measure.

\begin{lemma}
\label{m1}
Fix $\gamma$ in $C^2(\bb R_+,\alpha)$.  As $T\uparrow\infty$, the
measure $\bar \mu^T$ converges in $\bb P_{T, \gamma}$-probability to
the measure $\gamma (r) \, dr$.
\end{lemma}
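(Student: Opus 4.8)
The plan is to exploit the fact that $\nu_{T,\gamma}$ is a product measure which is stationary under the dynamics $\bb P_{T,\gamma}$. Because $\bar\mu^T$ is the time integral over $[0,1]$ of $\mu^{1,T}(\eta_s)$, and the law of $\eta_s$ under $\bb P_{T,\gamma}$ is $\nu_{T,\gamma}$ for every $s$, it suffices to show two things: first, that $\mu^{1,T}(\eta)$ concentrates (under $\nu_{T,\gamma}$) around the deterministic measure $\gamma(r)\,dr$ as $T\to\infty$; and second, that this concentration, once integrated in time, survives. Since convergence in $L^1(\bb P_{T,\gamma})$ of $\int_0^1 \mu^{1,T}(\eta_s)(H)\,ds$ follows from the $s$-uniform $L^2$ bound plus Fubini, the whole matter reduces to a second-moment computation under the single product measure $\nu_{T,\gamma}$.

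Concretely, I would fix $H\in C_K((0,\infty))$ and test the measure against it:
\begin{equation*}
\int_{\bb R_+} H(r)\, \mu^{1,T}(\eta)(dr) \;=\; \frac{1}{2\pi\log T}\sum_{x\in\bb Z^2_*} \eta(x)\,\frac{1}{|x|^2}\, H(\sigma_T(x))\;.
\end{equation*}
Taking expectation under $\nu_{T,\gamma}$ replaces $\eta(x)$ by $\gamma(\sigma_T(x))$, and Lemma~\ref{n-l22} applied to $r\mapsto H(r)\gamma(r)$ — which is Lipschitz with compact support because $\gamma$ is $C^2$ and $H$ is compactly supported in $(0,\infty)$ — shows this Riemann-type sum converges to $\int_{\bb R_+} H(r)\gamma(r)\,dr$, with error $O(T^{-a})$ where $[a,b]$ contains the support of $H$. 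For the variance, the product structure of $\nu_{T,\gamma}$ gives
\begin{equation*}
\mathrm{Var}_{\nu_{T,\gamma}}\Big( \sum_{x} \eta(x)\,\tfrac{1}{|x|^2}H(\sigma_T(x)) \Big)
\;=\; \sum_{x} \sigma(\gamma(\sigma_T(x)))\,\frac{1}{|x|^4}\,H(\sigma_T(x))^2\;,
\end{equation*}
and since $\sigma$ is bounded by $1/4$, this is bounded by a constant times $\sum_{x:\,T^a\le|x|\le T^b} |x|^{-4}$, which is $O(T^{-2a})$. Dividing by $(2\pi\log T)^2$ and using Chebyshev shows $\mu^{1,T}(\eta)(H)\to \int H\gamma\,dr$ in $\nu_{T,\gamma}$-probability.

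To pass from $\mu^{1,T}$ to $\bar\mu^T$, note that by stationarity
\begin{equation*}
\bb E_{T,\gamma}\Big[ \Big| \int_0^1 \mu^{1,T}(\eta_s)(H)\,ds - \int_{\bb R_+}H\gamma\,dr \Big| \Big]
\;\le\; \int_0^1 \bb E_{T,\gamma}\Big[ \big| \mu^{1,T}(\eta_s)(H) - \textstyle\int H\gamma\,dr \big| \Big]\,ds\;,
\end{equation*}
and each integrand equals $\bb E_{\nu_{T,\gamma}}[\,|\mu^{1,T}(H)-\int H\gamma\,dr|\,]$, which tends to $0$ by the previous paragraph together with the uniform boundedness $0\le\mu^{1,T}(\eta)(H)\le \|H\|_\infty\,\lambda_T(\mathrm{supp}\,H)\le C(H)$ coming from \eqref{n-41} (so that convergence in probability upgrades to $L^1$ via uniform integrability). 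This gives convergence in $\bb P_{T,\gamma}$-probability of $\bar\mu^T(H)$ to $\int H\gamma\,dr$ for each fixed $H\in C_K((0,\infty))$. Finally, since $\mc M_c$ with the vague topology is a compact metric space, testing against a countable dense family $\{H_k\}$ of such functions and taking a diagonal argument yields convergence of $\bar\mu^T$ to $\gamma(r)\,dr$ in the vague topology, in $\bb P_{T,\gamma}$-probability.

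I do not anticipate a serious obstacle here; the only point requiring a little care is the exchange of the time integral with the limit and the upgrade from convergence in probability to $L^1$, which is handled by the deterministic a priori bound \eqref{n-41} that makes $\bar\mu^T(H)$ uniformly bounded regardless of the dynamics. The computation itself is a textbook second-moment estimate made possible entirely by the product and stationarity properties of $\nu_{T,\gamma}$ recalled just before \eqref{n-56}.
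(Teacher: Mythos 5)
Your proof is correct and follows exactly the route the paper indicates: the paper defers to Lemma~6.1 of \cite{cll} and says the result ``follows from the stationarity of the measure $\nu_{T,\gamma}$ and from the fact that it is a product measure,'' which is precisely the second-moment computation you carry out, with Lemma~\ref{n-l22} identifying the limit of the first moment and the product structure making the variance $O(T^{-2a}/\log^2 T)$.
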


\smallskip\noindent{\bf Proof of the lower bound.} We reproduce the
proof presented in \cite{cll}.  Fix an open subset $G$ of $\mc M_c$.
In view of Corollary \ref{n-l21}, it is enough to show that
\begin{equation*}
\liminf_{T\to\infty} \frac{\log T} T \log \bb P_{\alpha}
\big[ \bar\mu^T \in G \big] \;\ge\; - I_{Q,\alpha} (\mu)
\end{equation*}
for every $\mu$ in $\mc M^*_{0,\alpha} \cap G$. Fix such a measure
$\mu$ and denote its density by $\gamma$. As observed above the
statement of Lemma \ref{n-l21}, $\gamma$ belongs to $C^2(\bb R_+,
\alpha)$. Let $\mc A_\gamma =\{ \bar\mu^T \in G\}$ and denote by $\bb
P_{T, \gamma}^A$ the probability measure $\bb P_{T, \gamma}$
conditioned on the set $\mc A_\gamma$.  With this notation we may
write
\begin{equation*}
\bb P_{\alpha} [ \bar\mu^T \in G ] \;= \;
\bb E_{T, \gamma} \Big[ \frac{d\bb P_{\alpha}}{d\bb P_{T, \gamma}}
\mb 1\{ \mc A_\gamma \} \Big] \;=\;
\bb E_{T, \gamma}^A \Big[ \frac{d\bb P_{\alpha}}{d\bb P_{T, \gamma}} \Big]\,
\bb P_{T, \gamma} [ \mc A_\gamma ] \;.
\end{equation*}
By the law of large numbers stated in Lemma \ref{m1},
$\lim_{T\to\infty} \bb P_{T, \gamma} [ \mc A_\gamma ] =1$.
Hence, by Jensen inequality,
\begin{align*}
& \liminf_{T\to \infty} \frac {\log T} T \log \bb P_{\alpha} [
\bar\mu^T  \in G \Big] \; = \;
\liminf_{T\to \infty} \frac {\log T} T \log \bb E_{T, \gamma}^A
\Big[ \frac{d\bb P_{\alpha}}{d\bb P_{T, \gamma}} \Big] \\
&\quad \ge\; \liminf_{T\to \infty} \frac {\log T} T \bb E_{T, \gamma}^A
\Big[ \log \frac{d\bb P_{\alpha}}{d\bb P_{T, \gamma}} \Big]
\; =\; \liminf_{T\to \infty} \frac {\log T} T \bb E_{T, \gamma}
\Big[ \log \frac{d\bb P_{\alpha}}{d\bb P_{T, \gamma}}
\mb 1\{ \mc A_\gamma \} \Big]\; .
\end{align*}
By the bound \eqref{n-57} for the Radon-Nikodym derivative and by
Lemma \ref{m1}, last term is equal to
\begin{equation*}
\liminf_{T\to \infty} \frac {\log T} T \bb E_{T, \gamma}
\Big[ \log \frac{d\bb P_{\alpha}}{d\bb P_{T, \gamma}} \Big]  
\end{equation*}
which is, up to a sign, the entropy of $\bb P_{T, \gamma}$ with
respect to $\bb P_{\alpha}$. In view of formula \eqref{n-58} for the
Radon-Nikodym derivative $d\bb P_{T, \gamma} / d\bb P_{\alpha}$, the
previous limit is equal to
\begin{align*}
\liminf_{T\to \infty} \bb E_{T, \gamma}
\Big[ \pi \int_{\bb R_+} \Gamma''(r) \, \bar \mu^T(dr) \Big]
\; + \; \liminf_{T\to \infty} \bb E_{T, \gamma} \Big[
\int_0^1 W_\gamma (\eta_s) \, ds  \Big]\; ,
\end{align*}
where $W_\gamma$ is defined in \eqref{n-59}.  Since $\nu_{T, \gamma}$
is a stationary state, these expectations are easily computed. Recall
Lemma \ref{n-l22} to show that the limit is equal to
\begin{equation*}
\pi \int_0^{1/2} \Big\{ \Gamma''(r) \, \gamma(r)
+ [\Gamma'(r)]^2 \, \sigma(\gamma(r))\Big\} \, dr\,  \;=\; -\, \frac{\pi }4 \int_0^{1/2} 
\frac{[\gamma']^2}{\gamma (1-\gamma)}\, dr \; =\; -\, I_{Q,\alpha} (\mu)\;.  
\end{equation*}
We were allowed to integrate by parts the first term on the right-hand
side because the function $\Gamma = (1/2) \{\log \gamma/(1-\gamma) -
\log \alpha/(1-\alpha)\}$ vanishes at the boundary. This proves the
lower bound.

\end{document}